\documentclass[11pt,a4paper]{amsart}
\usepackage[latin9]{inputenc}
\usepackage{mathtools}
\usepackage{amscd}
\usepackage{amstext}
\usepackage{amsthm}
\usepackage{amssymb}
\usepackage[unicode=true,
 bookmarks=false,
 breaklinks=false,pdfborder={0 0 1},backref=false,colorlinks=true,linkcolor=blue,citecolor=blue]
 {hyperref}
\usepackage{cleveref}
\usepackage{soul}
\usepackage{xcolor}
\usepackage{comment}

\makeatletter

\newcommand*\LyXbar{\rule[0.585ex]{1.2em}{0.25pt}}
\pdfpageheight\paperheight
\pdfpagewidth\paperwidth

\numberwithin{equation}{section}
\numberwithin{figure}{section}
\theoremstyle{plain}
\newtheorem{thm}{\protect\theoremname}[section]
  \theoremstyle{definition}
  
 \theoremstyle{definition}

  \theoremstyle{definition}
  \newtheorem{defn}[thm]{\protect\definitionname}
 \theoremstyle{definition}

  \theoremstyle{definition}
  \newtheorem{example}[thm]{\protect\examplename}
  \theoremstyle{definition}
  \newtheorem{notation}[thm]{\protect{Notation}}
  \theoremstyle{plain}
  \newtheorem{prop}[thm]{\protect\propositionname}
  \theoremstyle{plain}
  \newtheorem{cor}[thm]{\protect\corollaryname}
  \theoremstyle{plain}
  \newtheorem{lem}[thm]{\protect\lemmaname}
 \theoremstyle{definition}
 \newtheorem{rem}[thm]{\protect\remarkname}

\@ifundefined{date}{}{\date{}}

\usepackage{tikz}
\usepackage{amsthm}
\usepackage[hang]{footmisc}
\usepackage[all]{xypic}
\usepackage{color}
\usepackage[inline]{enumitem}
\usepackage{mathrsfs}

\allowdisplaybreaks

\newtheorem{innercustomthm}{Theorem}\newenvironment{customthm}[1]{\renewcommand\theinnercustomthm{#1}\innercustomthm}{\endinnercustomthm}

\makeatother

  \providecommand{\corollaryname}{Corollary}
  \providecommand{\definitionname}{Definition}
  \providecommand{\examplename}{Example}
  \providecommand{\lemmaname}{Lemma}
  \providecommand{\problemname}{Problem}
  \providecommand{\propositionname}{Proposition}
 \providecommand{\remarkname}{Remark}
\providecommand{\theoremname}{Theorem}

\begin{document}

\global\long\def\floorstar#1{\lfloor#1\rfloor}
\global\long\def\ceilstar#1{\lceil#1\rceil}

\global\long\def\B{B}
 \global\long\def\A{A}
 \global\long\def\J{J}
 \global\long\def\K{\mathcal{K}}
 \global\long\def\D{D}
 \global\long\def\Ch{D}
 \global\long\def\Zh{\mathcal{Z}}
 \global\long\def\E{E}
 \global\long\def\Oh{\mathcal{O}}

\global\long\def\T{{\mathbb{T}}}
 \global\long\def\BR{{\mathbb{R}}}
 \global\long\def\N{{\mathbb{N}}}
 \global\long\def\Z{{\mathbb{Z}}}
 \global\long\def\C{{\mathbb{C}}}
 \global\long\def\Q{{\mathbb{Q}}}

\global\long\def\aut{\mathrm{Aut}}
 \global\long\def\supp{\mathrm{supp}}

\global\long\def\eps{\varepsilon}

\global\long\def\id{\mathrm{id}}

\global\long\def\halpha{\widehat{\alpha}}
 \global\long\def\calpha{\widehat{\alpha}}

\global\long\def\tih{\widetilde{h}}

\global\long\def\opFol{\operatorname{F{\o}l}}

\global\long\def\opRange{\operatorname{Range}}

\global\long\def\opIso{\operatorname{Iso}}

\global\long\def\dimnuc{\dim_{\operatorname{nuc}}}

\global\long\def\set#1{\left\{  #1\right\}  }

\global\long\def\mset#1{\left\{  \!\!\left\{  #1\right\}  \!\!\right\}  }

\global\long\def\IA{\mathbb{A}}
 \global\long\def\IB{\mathbb{B}}
 \global\long\def\IC{\mathbb{C}}
 \global\long\def\ID{\mathbb{D}}
 \global\long\def\IE{\mathbb{E}}
 \global\long\def\IF{\mathbb{F}}
 \global\long\def\IG{\mathbb{G}}
 \global\long\def\IH{\mathbb{H}}
 \global\long\def\II{\mathbb{I}}
 \global\long\def\IJ{\mathbb{J}}
 \global\long\def\IK{\mathbb{K}}
 \global\long\def\IL{\mathbb{L}}
 \global\long\def\IM{\mathbb{M}}
 \global\long\def\IN{\mathbb{N}}
 \global\long\def\IO{\mathbb{O}}
 \global\long\def\IP{\mathbb{P}}
 \global\long\def\IQ{\mathbb{Q}}
 \global\long\def\IR{\mathbb{R}}
 \global\long\def\IS{\mathbb{S}}
 \global\long\def\IT{\mathbb{T}}
 \global\long\def\IU{\mathbb{U}}
 \global\long\def\IV{\mathbb{V}}
 \global\long\def\IW{\mathbb{W}}
 \global\long\def\IX{\mathbb{X}}
 \global\long\def\IY{\mathbb{Y}}
 \global\long\def\IZ{\mathbb{Z}}

\global\long\def\CA{\mathcal{A}}
 \global\long\def\CB{\mathcal{B}}
 \global\long\def\CC{\mathcal{C}}
 \global\long\def\CalD{\mathcal{D}}
 \global\long\def\CD{\mathcal{D}}
 \global\long\def\CE{\mathcal{E}}
 \global\long\def\CF{\mathcal{F}}
 \global\long\def\CG{\mathcal{G}}
 \global\long\def\CH{\mathcal{H}}
 \global\long\def\CI{\mathcal{I}}
 \global\long\def\CJ{\mathcal{J}}
 \global\long\def\CK{\mathcal{K}}
 \global\long\def\CL{\mathcal{L}}
 \global\long\def\CM{\mathcal{M}}
 \global\long\def\CN{\mathcal{N}}
 \global\long\def\CO{\mathcal{O}}
 \global\long\def\CP{\mathcal{P}}
 \global\long\def\CQ{\mathcal{Q}}
 \global\long\def\CR{\mathcal{R}}
 \global\long\def\CS{\mathcal{S}}
 \global\long\def\CT{\mathcal{T}}
 \global\long\def\CU{\mathcal{U}}
 \global\long\def\CV{\mathcal{V}}
 \global\long\def\CW{\mathcal{W}}
 \global\long\def\CX{\mathcal{X}}
 \global\long\def\CY{\mathcal{Y}}
 \global\long\def\CZ{\mathcal{Z}}

\global\long\def\FA{\mathfrak{A}}
 \global\long\def\FB{\mathfrak{B}}
 \global\long\def\FC{\mathfrak{C}}
 \global\long\def\FD{\mathfrak{D}}
 \global\long\def\FE{\mathfrak{E}}
 \global\long\def\FF{\mathfrak{F}}
 \global\long\def\FG{\mathfrak{G}}
 \global\long\def\FH{\mathfrak{H}}
 \global\long\def\FI{\mathfrak{I}}
 \global\long\def\FJ{\mathfrak{J}}
 \global\long\def\FK{\mathfrak{K}}
 \global\long\def\FL{\mathfrak{L}}
 \global\long\def\FM{\mathfrak{M}}
 \global\long\def\FN{\mathfrak{N}}
 \global\long\def\FO{\mathfrak{O}}
 \global\long\def\FP{\mathfrak{P}}
 \global\long\def\FQ{\mathfrak{Q}}
 \global\long\def\FR{\mathfrak{R}}
 \global\long\def\FS{\mathfrak{S}}
 \global\long\def\FT{\mathfrak{T}}
 \global\long\def\FU{\mathfrak{U}}
 \global\long\def\FV{\mathfrak{V}}
 \global\long\def\FW{\mathfrak{W}}
 \global\long\def\FX{\mathfrak{X}}
 \global\long\def\FY{\mathfrak{Y}}
 \global\long\def\FZ{\mathfrak{Z}}

\global\long\def\Ra{\Rightarrow}
 \global\long\def\La{\Leftarrow}
 \global\long\def\LRa{\Leftrightarrow}

\global\long\def\quer{\overline{}}
 \global\long\def\eins{\mathbf{1}}
 \global\long\def\diag{\operatorname{diag}}
 \global\long\def\ad{\operatorname{Ad}}
 \global\long\def\ev{\operatorname{ev}}
 \global\long\def\fin{{\subset\!\!\!\subset}}
 \global\long\def\diam{\operatorname{diam}}
 \global\long\def\Hom{\operatorname{Hom}}
 \global\long\def\dst{{\displaystyle }}
 \global\long\def\spp{\operatorname{supp}}
 \global\long\def\spo{\operatorname{supp}_{o}}
 \global\long\def\del{\partial}
 \global\long\def\lsc{\operatorname{Lsc}}
 \global\long\def\GU{\CG^{(0)}}
 \global\long\def\HU{\CH^{(0)}}
 \global\long\def\AU{\CA^{(0)}}
 \global\long\def\BU{\CB^{(0)}}
 \global\long\def\CUU{\CC^{(0)}}
 \global\long\def\DU{\CD^{(0)}}
 \global\long\def\CUUU{\CC'{}^{(0)}}

\global\long\def\AUl{(\CA^{l})^{(0)}}
\global\long\def\BUl{(B^{l})^{(0)}}
\global\long\def\HUp{(\CH^{p})^{(0)}}

\global\long\def\properlength{proper}

\global\long\def\interior#1{#1^{\operatorname{o}}}


\title[Fiberwise amenability of \'{e}tale groupoids]{Fiberwise amenability of \'{e}tale groupoids
}

\author{Xin~Ma}

\address{X.~Ma: Institute for Advanced Study in Mathematics, Harbin Institute of Technology, Harbin, 150001, China}

\email{xma17@hit.edu.cn}

\author{Jianchao~Wu}

\address{J.~Wu: Shanghai Center for Mathematical Sciences, Fudan University,
	Shanghai 200438, China}

\email{jianchao\_wu@fudan.edu.cn}

\keywords{Coarse geometry, Fiberwise amenability, \'{E}tale groupoids}
\subjclass[2000]{22A22, 46L35, 51F30, 37A55, 37B05}
\begin{abstract}
We introduce a new amenability property for \'{e}tale groupoids, termed \textit{fiberwise amenability}, along with a stronger variant termed \emph{ubiquitous fiberwise amenability}.  (Ubiquitous) fiberwise amenability emerges naturally from a coarse-geometric perspective on \'{e}tale groupoids and, in the special case of transformation groupoids, it coincides precisely with the amenability of the acting group (rather than topological amenability of the action). It is also tightly linked to the existence of invariant measures on the unit space of the groupoid. The coarse-geometric framework for \'{e}tale groupoids that we develop systematically in this work allows us to establish several foundational properties of (ubiquitous) fiberwise amenability. 
As an application, we prove a F\o lner--paradoxical dichotomy for minimal \'{e}tale groupoids, which will serve as a key tool in a sequel on almost elementariness of \'{e}tale groupoids.
\end{abstract}

\maketitle
\tableofcontents{}

\section{Introduction}

Amenability originated in group theory as a property reflecting the interplay between rigidity and flexibility. Its foundations lie in von Neumann's formulation in terms of invariant means, motivated by the Banach--Tarski paradox \cite{BanachTarski1924,vonNeumann1929}, and in F\o{}lner's characterization by finite sets with asymptotically small boundaries \cite{Folner1955}. It gradually became apparent, however, that many phenomena traditionally formulated in terms of amenable groups are more naturally understood in the broader contexts of group actions, measured equivalence relations, and geometric structures. This perspective led to several extensions of amenability beyond groups themselves, most notably to group actions, equivalence relations, and groupoids \cite{Zimmer1978,ConnesFeldmanWeiss1981,Renault1980groupoid}.

In the topological setting, locally compact groupoids provide the natural framework for such extensions. Renault's treatment of amenability for measured groupoids, followed by the development of \emph{topological amenability} for locally compact groupoids, established a powerful and unifying language encompassing amenable groups, amenable group actions, and amenable equivalence relations \cite{Renault1980groupoid,ADR2000}. Meanwhile, in coarse geometry, Yu introduced for metric spaces an amenability-type notion termed \emph{property~A} \cite{Yu2000}, with major applications to the coarse Baum-Connes conjecture and the strong Novikov conjecture. It was then realized that through the construction of the coarse groupoid of a metric space of bounded geometry, property~A corresponds precisely to topological amenability \cite{SkandalisTuYu2002coarse}. This notion has since become fundamental at the interface between operator algebras and coarse geometry as well as geometric group theory, particularly through its relationships with the nuclearity and exactness of uniform Roe algebras and reduced group C*-algebras \cite{Yu2000,Roe2003Lectures,Sako2020}.

On the other hand, there is another coarse-geometric amenability-type notion, namely \emph{metric amenability} \cite{BlockWeinberger1992Aperiodic}, which directly generalizes the F\o{}lner-set characterization of amenability for groups, and behaves quite differently from property~A: 
unlike the latter, metric amenability relates to tracial states on uniform Roe algebras \cite{AraLiLledoWu2018Amenabilitya}, instead of nuclearity or exactness. 
This distinction between the two amenability-type notions is akin to how topological amenability of a group action on a compact space does not reflect amenability of the acting group, nor does it guarantee the existence of an invariant probability measure. The action $\mathbb{F}_2\curvearrowright \del\mathbb{F}_2$ of a free group on its Gromov boundary illustrates this phenomenon: although $\mathbb{F}_2$ is nonamenable, its boundary action, and hence the associated transformation groupoid, is topologically amenable, while $\del\mathbb{F}_2$ admits no $\mathbb{F}_2$-invariant probability measure \cite{Adams1994,ADR2000}.

These considerations motivate the introduction of \emph{fiberwise amenability} for \'{e}tale groupoids (see Definition~\ref{def:fiberwise-amenable}), a notion designed to recover the amenability of acting groups for transformation groupoids and metric amenability for coarse groupoids, while complementing the global notion of topological amenability. The adjective fiberwise reflects the fact that our notion is designed to capture the amenability intrinsic to the large-scale geometry of a groupoid's fibers. This is natural in the above examples: a coarse metric space can be recovered from the fibers of its coarse groupoid, and the coarse geometry of the acting group of an action is retained by the fibers of the transformation groupoid. 

Let us summarize the main features of fiberwise amenability. 
Most notably, when the unit space of a groupoid is compact, it dynamically guarantees the existence of invariant probability measures (see Theorem~\ref{thm: B}). For a transformation groupoid $X\rtimes\Gamma$, it reflects the amenability of the acting group $\Gamma$ (see Remark~\ref{rem:transformation-groupoid-fiberwise-amenable}). For the coarse groupoid associated with a metric space, it recovers metric amenability of the underlying coarse space (see Proposition~\ref{prop:coarse-groupoid-amenable}). Fiberwise amenability therefore differs from topological amenability of \'{e}tale groupoids: for transformation groupoids, the latter corresponds to topological amenability of the action, whereas for coarse groupoids, it corresponds to Yu's property~A \cite{ADR2000,SkandalisTuYu2002coarse}. Although amenability of the acting group implies topological amenability of the action \cite{ADR2000}, fiberwise amenability is, in general, neither stronger nor weaker than topological amenability, as demonstrated by coarse groupoids (see \Cref{rem:fa-vs-ta}). More striking examples arise from the minimal ample \'{e}tale groupoids introduced by Elek in \cite{Elek-qualitative}, which are fiberwise amenable but not topologically amenable (see \Cref{exa: Elek}).

A key technical ingredient underlying our approach is a systematic method for associating a canonical coarse metric structure with any $\sigma$-compact \'{e}tale groupoid. More precisely, we show that every such groupoid admits a proper, continuous length function that is unique up to coarse equivalence (see Theorem~\ref{thm: A}). This construction extends the classical passage from a countable discrete group to a proper left-invariant metric \cite{Struble1974,Roe2003Lectures} and, more generally, places \'{e}tale groupoids firmly within the framework of coarse geometry \cite{SkandalisTuYu2002coarse}. Once this coarse structure is in place, fiberwise amenability arises naturally from metric amenability of the associated extended coarse metric space. This perspective not only clarifies the definition but also allows one to import intuition and techniques from coarse geometry into the groupoid setting.

Although fiberwise amenability captures many essential features, it is often too weak for applications requiring robust permanence properties. As in the metric setting, amenability may reflect only the behavior of part of a space or groupoid rather than its global structure. To address this issue, we introduce the stronger notion of \emph{ubiquitous fiberwise amenability} in Definition~\ref{def:fiberwise-amenable}. This condition requires amenability to be present uniformly and ubiquitously throughout the groupoid. 

Ubiquitous fiberwise amenability plays a decisive structural role for \'{e}tale and, in particular, ample groupoids. It provides a natural dividing line between two important regularity properties introduced by Matui: \emph{almost finiteness} and \emph{pure infiniteness} \cite{Matui2012Homology, Matui2015Topological}. This dichotomy closely parallels the division of unital classifiable $\mathrm{C}^*$-algebras into stably finite and purely infinite classes. 
One motivation for the present work is the observation that Matui's almost finiteness implies ubiquitous fiberwise amenability (see \Cref{prop:af-ufa}). This insight leads to a broader regularity property for \'{e}tale groupoids, called \emph{almost-elementariness}, which unifies almost finiteness and pure infiniteness without assuming ampleness. The development and applications of almost-elementariness will be taken up in a sequel to this paper \cite{MWAE}.

The main results of this paper can be summarized as follows. First, we establish the existence and uniqueness, up to coarse equivalence, of a proper continuous length function on every $\sigma$-compact \'{e}tale groupoid, thereby endowing it with a canonical coarse metric structure.

\begin{customthm}{A}[Theorem \ref{thm:coarse-length-functions}]\label{thm: A}
Let $\CG$ be a $\sigma$-compact,
\'{e}tale groupoid. 
Then, up to coarse equivalence, it has a unique coarse continuous length function. 
\end{customthm}

We introduce the fiberwise amenability and the ubiquitous fiberwise amenability, and prove that the fiberwise amenability implies  that there is always an invariant probability measure on the unit space. 

\begin{customthm}{B}[Corollary \ref{cor:fiberwise-amenable-invariant-measure}]\label{thm: B}
Let $\CG$ be a fiberwise amenable, $\sigma$-compact,
\'{e}tale groupoid with a compact unit space.
Then the set $M(\CG)$ of all $\CG$-invariant probability measures is not empty. 
\end{customthm}

Moreover, we show that for minimal \'{e}tale groupoids these two notions coincide.  The proof of this equivalence relies on a local slice lemma (see Lemma \ref{lem:local-slice}), which allows one to start from a single F\o{}lner set and make near-identical copies in a controlled way.  

\begin{customthm}{C}[Theorem \ref{thm:minimal-fiberwise-amenability}]\label{thm: C}
    Let $\CG$
be a $\sigma$-compact \'{e}tale groupoid. Suppose
$\CG$ is minimal. Then $\CG$ is fiberwise amenable if and only
if it is ubiquitously fiberwise amenable.
\end{customthm}

Furthermore, we show that ubiquitous metric (or fiberwise) amenability yields a F\o{}lner--versus--paradoxical type dichotomy for arbitrary finite subsets.

\begin{customthm}{D}[Theorem \ref{thm:fiberwise-amenable-dichotomy}]\label{thm: D}
    Let $\CG$ be a 
$\sigma$-compact \'{e}tale groupoid with a compact unit space. 
Then we have the following dichotomy. 
\begin{enumerate}
\item If $\CG$ is ubiquitous fiberwise amenable, then 
for any compact subset $K$ in $\CG$ and any $\varepsilon>0$, 
there is a compact set $L \subseteq \CG$ such that
for any finite set $M \subseteq\CG$, there
is a finite set $F$ satisfying 
\[
	M \subseteq F\subseteq LM \ \textrm{and}\ |KF|\leq(1+\varepsilon)|F|.
\]

\item If $\CG$ is not fiberwise amenable, then 
for any compact set $K \subseteq\CG$
and any $n\in\mathbb{N}$, there is a compact set $L \subseteq\CG$ such that
for any finite set $M\subseteq\CG$, the set
$LM$ contains at least $n|M|$ many disjoint sets of the form $K x$ for $x \in \CG$. 
\end{enumerate}
\end{customthm}
In the minimal case, Theorem \ref{thm: D}, combined with Theorem \ref{thm: C}, provides a genuine dichotomy, which will be instrumental in our later study of almost-elementariness and groupoid strict comparison in \cite{MWAE}.

The paper is organized as follows.  In Section~\ref{sec:preliminaries}, we collect background material on \'{e}tale groupoids, invariant measures, and coarse geometry, and fix notation used throughout the paper.  Section~\ref{sec:metric-amenability} studies amenability for extended coarse metric spaces, revisiting metric amenability before introducing its ubiquitous variant.  In Section~\ref{sec:coarse-geometry}, we develop the coarse geometry of \'{e}tale groupoids, including the construction and uniqueness of proper continuous length functions.  Finally, Section~\ref{sec:fiberwise-amenability} introduces fiberwise amenability and ubiquitous fiberwise amenability for \'{e}tale groupoids, establishes their basic properties, proves the main equivalence and dichotomy results, and investigates a number of special cases of interest.

\section{Preliminaries\label{sec:preliminaries}}

In this section we recall some basic backgrounds on coarse geometry and
\'{e}tale groupoids.

In this paper, there are two types of metric spaces under consideration.
One concerns usual topological metrizable spaces focus on local behavior
while another are coarse metric spaces from large scale geometric
point of view. However, even these two types have different nature,
as metric spaces, they share some same notations. Let $(X,d)$ be
a metric space equipped with the metric $d$. We denote by $B_{d}(x,R)$
the open ball $B_{d}(x,R)=\{y\in X:d(x,y)<R\}$ and by $\bar{B}_{d}(x,R)$
the closed ball $\bar{B}_{d}(x,R)=\{y\in X:d(x,y)\leq R\}$. Let $A$
be a subset of $X$. We write $B_{d}(A,R)$ and $\bar{B}_{d}(A,R)$
for analogous meaning. If the metric is clear, we write $B(A,R)$
and $\bar{B}(A,R)$ instead for simplification. We refer to \cite{NowakYu2012Large}
as a standard reference for topics of large scale geometry. 

We refer to \cite{Renault1980groupoid} and \cite{Sims-groupoids}
as references for groupoids and we record several fundamental definitions
and results for locally compact Hausdorff \'{e}tale groupoids here. 
\begin{defn}
A \textit{groupoid} $\CG$ is a set equipped with a distinguished
subset $\CG^{(2)}\subset\CG\times\CG$, called the set of \textit{composable
pairs}, a product map $\CG^{(2)}\rightarrow\CG$, denoted by $(\gamma,\eta)\mapsto\gamma\eta$
and an inverse map $\CG\rightarrow\CG$, denoted by $\gamma\mapsto\gamma^{-1}$
such that the following hold 
\begin{enumerate}[label=(\roman*)]
\item If $(\alpha,\beta)\in\CG^{(2)}$ and $(\beta,\gamma)\in\CG^{(2)}$
then so are $(\alpha\beta,\gamma)$ and $(\alpha,\beta\gamma)$. In
addition, $(\alpha\beta)\gamma=\alpha(\beta\gamma)$ holds in $\CG$.
\item For all $\alpha\in\CG$ one has $(\gamma,\gamma^{-1})\in\CG^{(2)}$
and $(\gamma^{-1})^{-1}=\gamma$.
\item For any $(\alpha,\beta)\in\CG^{(2)}$ one has $\alpha^{-1}(\alpha\beta)=\beta$
and $(\alpha\beta)\beta^{-1}=\alpha$. 
\end{enumerate}
Every groupoid is equipped with a subset $\GU=\{\gamma\gamma^{-1}:\gamma\in\CG\}$
of $\CG$. We refer to elements of $\GU$ as \textit{units} and to
$\GU$ itself as the \textit{unit space}. We define two maps $s,r:\CG\rightarrow\GU$
by $s(\gamma)=\gamma^{-1}\gamma$ and $r(\gamma)=\gamma\gamma^{-1}$,
respectively, in which $s$ is called the \textit{source} map and
$r$ is called the \textit{range} map. 
\end{defn}
\label{paragraph:sections-in-groupoids}
When a groupoid $\CG$ is endowed with a locally compact Hausdorff
topology under which the product and inverse maps are continuous,
the groupoid $\CG$ is called a locally compact Hausdorff groupoid.
A locally compact Hausdorff groupoid $\CG$ is called \textit{\'{e}tale}
if the range map $r$ is a local homeomorphism from $\CG$ to itself,
which means for any $\gamma\in\CG$ there is an open neighborhood
$U$ of $\gamma$ such that $r(U)$ is open and $r|_{U}$ is a homeomorphism.
In this case, since the map of taking inverses is an involutive homeomorphism
on $\CG$ that intertwines $r$ and $s$, thus the source map $s$
is also a local homeomorphism. 
A subset $S$ in $\CG$ is called an \emph{$s$-section} (respectively, an \emph{$r$-section})
if there is an open neighborhood $U$ of $S$ in $\CG$ such that
the source map $s$ (respectively,
the range map $r$) restricts to a homeomorphism from $U$
onto an open subset of $\GU$. 
It is called a \emph{bisection} if it is both an $s$-section and an $r$-section. 
It is not hard to see a locally compact
Hausdorff groupoid is \'{e}tale if and only if its topology has a basis
consisting of open bisections. We say a locally compact Hausdorff
\'{e}tale groupoid $\CG$ is \textit{ample} if its topology has a basis
consisting of compact open bisections.

\begin{example}\label{exa:pair-groupoid}
	Let $X$ be a discrete space. The \emph{pair groupoid} over $X$ has the discrete space $X \times X$ as the underlying topological space, and the groupoid operations are defined so that for any $u,v,w \in X$, we have 
	\[
		r(u,v) = u, \quad s(u,v) = v, \quad (u,v)^{-1} = (v,u) \quad \text{ and } \quad (u,v)(v,w) = (u,w) \; .
	\]
\end{example}

\begin{example}
\label{exa:transformation-groupoid}
Let $X$ be a locally compact
Hausdorff space and $\Gamma$ be a discrete group. Then any action
$\Gamma\curvearrowright X$ by homeomorphisms induces a locally compact
Hausdorff \'{e}tale groupoid 
\[
X\rtimes\Gamma\coloneqq\{(\gamma x,\gamma,x):\gamma\in\Gamma,x\in X\}
\]
equipped with the relative topology as a subset of $X\times\Gamma\times X$.
In addition, $(\gamma x,\gamma,x)$ and $(\beta y,\beta,y)$ are composable
only if $\beta y=x$ and 
\[
(\gamma x,\gamma,x)(\beta y,\beta,y)=(\gamma\beta y,\gamma\beta y,y).
\]
One also defines $(\gamma x,\gamma,x)^{-1}=(x,\gamma^{-1},\gamma x)$
and announces that $\GU\coloneqq\{(x,e_{\Gamma},x):x\in X\}$. It
is not hard to verify that $s(\gamma x,\gamma,x)=x$ and $r(\gamma x,\gamma,x)=\gamma x$.
The groupoid $X\rtimes\Gamma$ is called a \textit{transformation
groupoid}. 
\end{example}
The following are several basic properties of locally compact Hausdorff
\'{e}tale groupoids whose proofs could be found in \cite{Sims-groupoids}.
\begin{prop}
Let $\CG$ be a locally compact Hausdorff \'{e}tale groupoid. Then $\GU$
is a clopen set in $\CG$. 
\end{prop}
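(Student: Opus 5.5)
We show that $\GU$ is open and that it is closed, and treat the two halves separately. Throughout we identify $\GU$ with the subset $\{\gamma\in\CG:\gamma=r(\gamma)\}$. This is legitimate because every unit $u=\gamma\gamma^{-1}$ satisfies $r(u)=u$. Conversely, if $\gamma=r(\gamma)$, then $\gamma\in\GU$ by definition.

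\emph{Closedness.} The range map $r\colon\CG\to\CG$ is continuous, since it is the composition of the continuous inverse map with the continuous product $\gamma\mapsto\gamma\gamma^{-1}$. Consider the continuous map $\gamma\mapsto(\gamma,r(\gamma))$ from $\CG$ to $\CG\times\CG$. Then
\[
\GU=\{\gamma\in\CG:\gamma=r(\gamma)\}
\]
is the preimage of the diagonal of $\CG\times\CG$ under this map. Since $\CG$ is Hausdorff, the diagonal is closed, and therefore $\GU$ is closed.

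\emph{Openness.} This is the step that genuinely uses the \'{e}tale hypothesis. Fix $u\in\GU$. Choose an open neighborhood $U$ of $u$ in $\CG$ on which $r$ restricts to a homeomorphism onto the open set $r(U)$. Define
\[
V=U\cap r^{-1}(U).
\]
This set is open and contains $u$, because $r(u)=u\in U$.

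We claim that $V\subseteq\GU$. Take $\gamma\in V$. Then both $\gamma$ and $r(\gamma)$ lie in $U$. Moreover $r(r(\gamma))=r(\gamma)$, since $r$ restricted to units is the identity. Hence $r(\gamma)=r(r(\gamma))$ with both $\gamma$ and $r(\gamma)$ in $U$. Injectivity of $r|_U$ then gives $\gamma=r(\gamma)$, so $\gamma\in\GU$. Thus each unit has an open neighborhood contained in $\GU$, and $\GU$ is open.

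The only point requiring care is the identity $r(r(\gamma))=r(\gamma)$. It follows from the groupoid axioms: by axiom (iii), $r(\gamma)=\gamma\gamma^{-1}$ satisfies $(\gamma\gamma^{-1})(\gamma\gamma^{-1})=\gamma\gamma^{-1}$ and $(\gamma\gamma^{-1})^{-1}=\gamma\gamma^{-1}$. Therefore
\[
r(\gamma\gamma^{-1})=(\gamma\gamma^{-1})(\gamma\gamma^{-1})^{-1}=\gamma\gamma^{-1}.
\]
Combining the two parts, $\GU$ is both open and closed in $\CG$, as claimed.
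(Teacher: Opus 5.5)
Your proof is correct. The paper gives no argument of its own and only refers to Sims's notes. Your closedness step, realizing $\GU$ as the fixed-point set of the continuous map $r$ in a Hausdorff space, is the standard one.

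For openness, the paper's literal definition of \'{e}tale already does the work. There $r|_U$ is a homeomorphism onto a set $r(U)$ that is open in $\CG$. So for $u\in\GU$ and such a $U\ni u$ we get $u=r(u)\in r(U)\subseteq\GU$. Hence $\GU$ is a union of open subsets of $\CG$; equivalently, $r$ is an open map and $\GU=r(\CG)$.

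Your argument via $V=U\cap r^{-1}(U)$ and $r\circ r=r$ uses only continuity of $r$ and injectivity of $r|_U$. It therefore also covers the common alternative convention in which $r\colon\CG\to\GU$ is a local homeomorphism onto sets open only in $\GU$. Under that convention, openness of $r$ yields nothing about openness of $\GU$ inside $\CG$, and your injectivity argument is what is needed. The cost is just a couple of extra lines.

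One small quibble: idempotence of $e=\gamma\gamma^{-1}$ is not literally axiom (iii). It follows from (ii) and (iii), which give $e\gamma=\gamma$, together with associativity (i). Also, for $r(e)=e$ you only need $e^2=e$, since (iii) then yields $ee^{-1}=(ee)e^{-1}=e$.
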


\begin{prop}
Let $\CG$ be a locally compact Hausdorff \'{e}tale groupoid. Suppose
$U$ and $V$ are open bisections in $\CG$. Then $UV=\{\alpha\beta\in\CG:(\alpha,\beta)\in\CG^{(2)}\cap U\times V\}$
is also an open bisection. 
\end{prop}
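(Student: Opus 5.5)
We need to show that $UV$ is open and that $UV$ is a bisection, i.e.\ that $s$ and $r$ restrict to homeomorphisms from $UV$ onto open subsets of $\GU$. The plan is to use the characterization of openness via the multiplication map, and to obtain injectivity of $s$ and $r$ from the injectivity of $s|_U,s|_V,r|_U,r|_V$.

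\emph{Openness.} First we show that multiplication $m\colon\CG^{(2)}\to\CG$ is an open map in an \'{e}tale groupoid. Take $(\alpha,\beta)\in\CG^{(2)}\cap(U\times V)$ and an open neighborhood $W\ni\alpha\beta$; we may assume $W\subseteq UV$ is an open bisection. By continuity of $(\gamma,\eta)\mapsto\gamma^{-1}\eta$ on pairs with common range, and the fact that $\alpha^{-1}(\alpha\beta)=\beta$, there are open neighborhoods $U_0\subseteq U$ of $\alpha$ and $W_0\subseteq W$ of $\alpha\beta$ such that $\gamma^{-1}\eta\in V$ whenever $\gamma\in U_0$, $\eta\in W_0$ and $r(\gamma)=r(\eta)$. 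Set $O=W_0\cap r^{-1}(r(U_0))$, which is open because $r(U_0)$ is open. For $\eta\in O$, pick $\gamma\in U_0$ with $r(\gamma)=r(\eta)$. Then $\eta=\gamma(\gamma^{-1}\eta)\in UV$. Hence $O\subseteq UV$ is an open neighborhood of $\alpha\beta$, and $UV$ is open.

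\emph{Injectivity of $s$ and $r$.} Suppose $\alpha\beta,\alpha'\beta'\in UV$ with $s(\alpha\beta)=s(\alpha'\beta')$. Then $s(\beta)=s(\beta')$, so $\beta=\beta'$ since $s|_V$ is injective. It follows that $s(\alpha)=r(\beta)=r(\beta')=s(\alpha')$, so $\alpha=\alpha'$ by injectivity of $s|_U$. The argument for $r$ is symmetric.

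\emph{Homeomorphism onto an open set.} Since $s$ is a local homeomorphism, it is continuous and open. Hence $s|_{UV}$ is a continuous, open injection, and therefore a homeomorphism onto the open set $s(UV)$. The same reasoning applies to $r$.

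The main obstacle is the openness step. It needs care with the continuity of division, namely that the map $(\gamma,\eta)\mapsto\gamma^{-1}\eta$ on pairs with equal range is continuous. This follows from continuity of inversion and of multiplication on $\CG^{(2)}$.
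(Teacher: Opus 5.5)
Your proof is correct. The paper gives no argument of its own here (it defers this standard fact to Sims' notes), and your route is the standard proof: openness of $UV$ via continuity of the division map $(\gamma,\eta)\mapsto\gamma^{-1}\eta$ together with openness of $r$, followed by injectivity of $s$ and $r$ on $UV$ inherited from $U$ and $V$. The one blemish is the phrase ``we may assume $W\subseteq UV$'', which presupposes that $UV$ is a neighborhood of $\alpha\beta$. Since $W$ plays no role in showing $O\subseteq UV$, you can simply take $W=\CG$.
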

It is also convenient to define, for $n=1,2,\ldots$, the set of \emph{composable
$n$-tuples} 
\[
\CG^{(n)}=\left\{ (x_{1},\ldots,x_{n})\in\CG^{n}:s\left(x_{i}\right)=r\left(x_{i+1}\right)\mbox{ for }i=1,2,\ldots,n-1\right\} 
\]
and the \emph{$n$-ary multiplication map}
\[
\delta^{(n)}:\CG^{(n)}\to\CG,\quad(x_{1},\ldots,x_{n})\mapsto x_{1}\cdots x_{n}.
\]

\begin{cor}
\label{cor:multiplication-is-local-homeo}Let $\CG$ be a locally
compact Hausdorff \'{e}tale groupoid. Then for any $n\in\{1,2,\ldots\}$,
the\emph{ }$n$-ary multiplication map is a local homeomorphism. 
\end{cor}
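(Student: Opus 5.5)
We argue by induction on $n$; the case $n=1$ is trivial, since $\delta^{(1)}$ is the identity of $\CG$ (with $\CG^{(1)}=\CG$). The key identity is
\[
\delta^{(n+1)}(x_{1},\ldots,x_{n+1})=\delta^{(2)}\bigl(\delta^{(n)}(x_{1},\ldots,x_{n}),x_{n+1}\bigr),
\]
so it suffices to show two things: $\delta^{(2)}$ is a local homeomorphism, and the map $\CG^{(n+1)}\to\CG^{(2)}$, $(x_{1},\ldots,x_{n+1})\mapsto(x_{1}\cdots x_{n},x_{n+1})$, is a local homeomorphism. The second map is just $\delta^{(n)}\times\id$ restricted to $\CG^{(n+1)}\subseteq\CG^{(n)}\times\CG$. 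Here $\CG^{(n+1)}$ is the fibered product over $\GU$ via $s\circ\delta^{(n)}=s(x_n)$ and $r$. We use that $s(x_{1}\cdots x_{n})=s(x_{n})$, so the composability condition is preserved.

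For $\delta^{(2)}$ we work locally. Fix $(\alpha,\beta)\in\CG^{(2)}$ and choose open bisections $U\ni\alpha$, $V\ni\beta$. On $W=\CG^{(2)}\cap(U\times V)$, which is open in $\CG^{(2)}$, we show that multiplication is a homeomorphism onto $UV$, an open bisection by the preceding proposition.

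\begin{itemize}
\item \emph{Injectivity.} If $\alpha'\beta'=\alpha''\beta''$, then $r(\alpha')=r(\alpha'')$, so $\alpha'=\alpha''$ because $r|_U$ is injective. Cancelling gives $\beta'=\beta''$.
\item \emph{Continuity of the inverse.} Given $\gamma\in UV$, set $\alpha'=(r|_{U})^{-1}(r(\gamma))$ and $\beta'=\alpha'^{-1}\gamma$. This is continuous in $\gamma$ since inversion and multiplication are continuous.
\item \emph{Openness.} The image $UV$ is open, as noted above.
\end{itemize}

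Hence $\delta^{(2)}$ is a local homeomorphism.

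For the fibered product map, a local homeomorphism $f\colon Y\to Z$ induces a local homeomorphism on fibered products $Y\times_{\GU}\CG\to Z\times_{\GU}\CG$, provided the structure maps are compatible: $p_Z\circ f=p_Y$. Concretely, if $f|_{O}\colon O\to f(O)$ is a homeomorphism with $f(O)$ open, then $(O\times\CG)\cap(Y\times_{\GU}\CG)$ maps homeomorphically onto $(f(O)\times\CG)\cap(Z\times_{\GU}\CG)$. The inverse is $(f|_O)^{-1}\times\id$, and the compatibility $p_Z\circ f=p_Y$ ensures the image is exactly that open set.

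Apply this with $Y=\CG^{(n)}$, $Z=\CG$, $f=\delta^{(n)}$, using $p_Y(x_1,\ldots,x_n)=s(x_n)$ and $p_Z=s$. Composing two local homeomorphisms gives a local homeomorphism, which completes the induction.

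The main obstacle is bookkeeping rather than substance. One must verify that the composability constraint $s(x_n)=r(x_{n+1})$ transfers exactly under $\delta^{(n)}$, so that the image of the restricted product map is open in $\CG^{(2)}$ and not just in $\CG\times\CG$. This is the point where the compatibility $p_Z\circ f=p_Y$ is used.
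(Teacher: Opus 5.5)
Your proof is correct. The paper supplies no proof of its own: the corollary sits directly after the proposition that a product $UV$ of open bisections is an open bisection, with proofs deferred to \cite{Sims-groupoids}.

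The intended argument is evidently the direct one. One picks open bisections $U_i\ni x_i$ and shows that $\delta^{(n)}$ maps $\CG^{(n)}\cap(U_1\times\cdots\times U_n)$ homeomorphically onto $U_1\cdots U_n$, which is an open bisection by iterating that proposition. The inverse peels off one factor at a time: $x_1=(r|_{U_1})^{-1}(r(\gamma))$, then $x_2=(r|_{U_2})^{-1}(r(x_1^{-1}\gamma))$, and so on.

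Your proof uses exactly this mechanism for $n=2$. You then reach general $n$ through the factorization $\delta^{(n+1)}=\delta^{(2)}\circ(\delta^{(n)}\times\id)$, together with the observation that a local homeomorphism compatible with the structure maps to $\GU$ pulls back to a local homeomorphism of fibered products.

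The direct route settles all $n$ in one step but needs the $n$-fold product to be an open bisection and an $n$-step inverse formula. Your route only ever uses the two-factor case of the bisection proposition. It also isolates the composability bookkeeping ($s\circ\delta^{(n)}=s(x_n)$) in a small lemma that is easy to check and reusable.
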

We also record the following useful fact about local homeomorphisms. 
\begin{lem}
\label{lem:local-homeomorphism-cover}Let $f:X\to Y$ be a local homeomorphism
between topological spaces with $Y$ being Hausdorff. Then for any
$y\in Y$ and any compact subset $K\subseteq X$, there are an open
neighborhood $U$ of $y$ in $Y$ and a finite family of open subsets
$V_{1},\ldots,V_{n}$ in $X$ such that 
\begin{enumerate}
\item the map $f$ restricts to a homeomorphism between $V_{i}$ and $U$,
for any $i\in\left\{ 1,\ldots,n\right\} $, and
\item we have $f^{-1}(U)\cap K\subseteq V_{1}\cup\ldots\cup V_{n}$. 
\end{enumerate}
\end{lem}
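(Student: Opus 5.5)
The plan is to cover $K$ by finitely many open sets on which $f$ is injective, and then shrink a neighbourhood of $y$ so that every point of $K$ over it lies in one of these sheets. Only the finitely many points of $f^{-1}(y)\cap K$ matter.

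\emph{Step 1: the fiber over $y$ is finite.} Every $x\in X$ has an open neighbourhood on which $f$ is injective. Cover $K$ by such sets and take a finite subcover $W_1,\ldots,W_m$. Each $W_j$ meets $f^{-1}(y)$ in at most one point, so $f^{-1}(y)\cap K$ is finite; write it as $\{x_1,\ldots,x_n\}$.

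\emph{Step 2: local sheets over a common base.} For each $i$, choose an open $V_i'\ni x_i$ such that $f|_{V_i'}$ is a homeomorphism onto an open set $f(V_i')\ni y$. Put
\[
U_0=\bigcap_{i=1}^{n} f(V_i'),
\]
which is an open neighbourhood of $y$. For any open $U\ni y$ with $U\subseteq U_0$, set $V_i=V_i'\cap f^{-1}(U)$. Then $f$ restricts to a homeomorphism from $V_i$ onto $U$, so condition (1) holds for every such $U$. If $n=0$, take $V_i'$ to be nothing and $U_0=Y$.

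\emph{Step 3: covering $f^{-1}(U)\cap K$ (the main point).} The set $C=K\setminus(V_1'\cup\cdots\cup V_n')$ is closed in the compact set $K$, hence compact. Therefore $f(C)$ is compact, and since $Y$ is Hausdorff, $f(C)$ is closed. By construction $C$ contains no point of $f^{-1}(y)$, so $y\notin f(C)$. Take
\[
U=U_0\setminus f(C),
\]
an open neighbourhood of $y$.

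Now let $x\in f^{-1}(U)\cap K$. Since $f(x)\in U$, we have $f(x)\notin f(C)$, so $x\notin C$. Hence $x\in V_i'$ for some $i$, and as $f(x)\in U$, in fact $x\in V_i$. This gives condition (2).

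Hausdorffness of $Y$ is used exactly once: to conclude that the compact set $f(C)$ is closed. Note that $K$ itself need not be closed in $X$, since $X$ is not assumed Hausdorff. This causes no problem, because closedness of $C$ is only needed relative to $K$, where it holds automatically.
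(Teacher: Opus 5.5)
Your proof is correct and is essentially the paper's argument. You take local sheets at the points over $y$, form the compact remainder $C=K\setminus\bigcup_i V_i'$ whose image misses $y$, and choose $U$ inside all sheet images and away from $f(C)$. The only cosmetic difference is how you get finitely many sheets: you first show $f^{-1}(y)\cap K$ is finite via injectivity neighbourhoods, whereas the paper extracts a finite subcover of $K$ from $\{f^{-1}(Y\setminus\{y\})\}\cup\{V_x : x\in f^{-1}(y)\}$.
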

\begin{proof}
Since $f$ is a local homeomorphism, we know for any $x\in f^{-1}(y)$,
there are open neighborhoods $V_{x}$ of $x$ and $U_{x}$ of $y$
such that $f$ restricts to a homeomorphism between $V_{x}$ and $U_{x}$.
Since the collection $\left\{ f^{-1}\left(Y\setminus\{y\}\right),V_{x}:x\in f^{-1}(y)\right\} $
form an open cover of $K$, by compactness, there are $x_{1},\ldots,x_{n}\in f^{-1}(y)$
such that $\left\{ f^{-1}\left(Y\setminus\{y\}\right),V_{x_{i}}:i=1,\ldots,n\right\} $
form a finite open cover of $K$. Let $L=K\setminus\bigcup_{i=1}^{n}V_{x_{i}}$,
which is a closed subset of $K$ and thus also compact; so is the
image $f\left(L\right)$. Observe that $L\subseteq f^{-1}\left(Y\setminus\{y\}\right)$,
i.e., $y\not\in f(L)$. Since a Hausdorff space has separation between
a point and a compact set, there is an open neighborhood $W$ of $y$
in $Y$ such that $W\cap f(L)=\varnothing$. Let $U=W\cap\left(\bigcap_{i=1}^{n}U_{x_{i}}\right)$
and let $V_{i}=\left(f\mid_{V_{x_{i}}}\right)^{-1}\left(U\right)$,
for $i=1,\ldots,n$. They clearly satisfy the first condition. As
for the second condition, we observe that $f^{-1}(U)\cap L=\varnothing$
and thus $f^{-1}(U)\cap K=f^{-1}(U)\cap\left(L\cup V_{1}\cup\ldots\cup V_{n}\right)\subseteq V_{1}\cup\ldots\cup V_{n}$. 
\end{proof}
For any set $D\subset\GU$, Denote by 
\[
\CG_{D}\coloneqq\{\gamma\in\CG:s(\gamma)\in D\},\ \CG^{D}\coloneqq\{\gamma\in\CG:r(\gamma)\in D\},\ \text{and}\ \ \CG_{D}^{D}\coloneqq\CG^{D}\cap\CG_{D}.
\]
For the singleton case $D=\{u\}$, we write $\CG_{u}$, $\CG^{u}$
and $\CG_{u}^{u}$ instead for simplicity. In this situation, we call
$\CG_{u}$ a \textit{source fiber} and $\CG^{u}$ a \textit{range
fiber}. In addition, each $\CG_{u}^{u}$ is a group, which is called
the \textit{isotropy} at $u$. We also denote by \[\opIso(\CG)=\bigcup_{u\in \GU}\CG^u_u=\{x\in \CG: s(x)=r(x)\}\] the isotropy of the groupoid $\CG$. We say a groupoid $\CG$ is \textit{principal}
if $\opIso(\CG)=\GU$. A groupoid $\CG$ is called \textit{topologically
principal} if the set $\{u\in\GU:\CG_{u}^{u}=\{u\}\}$ is dense in
$\GU$. The groupoid $\CG$ is also said to be \textit{effective} if $\opIso(\CG)^o=\GU$. Recall that effectiveness is equivalent to topological principalness  if $\CG$ is second countable (See \cite[Lemma 4.2.3]{Sims-groupoids}). Therefore, effectiveness is equivalent to the topological freeness of an action of a countable discrete group acting on a compact metrizable space by looking at the corresponding transformation groupoid.

A subset $D$ in $\GU$ is called $\CG$-\textit{invariant}
if $r(\CG D)=D$, which is equivalent to the condition $\CG^{D}=\CG_{D}$.
Note that $\CG|_{D}\coloneqq\CG_{D}^{D}$ is a subgroupoid of $\CG$
with the unit space $D$ if $D$ is a $\CG$-invariant set in $\GU$.
A groupoid $\CG$ is called \textit{minimal} if there are no proper
non-trivial closed $\CG$-invariant subsets in $\GU$.

\begin{notation}
	Throughout the paper, we write $B\sqcup C$ to indicate that
	the union of sets $B$ and $C$ is a disjoint union. In addition,
	we denote by $\bigsqcup_{i\in I}B_{i}$ for the disjoint union of
	the family $\{B_{i}:i\in I\}$. We also denote by $\ceilstar{\cdot}$
	the ceiling function and by $\floorstar{\cdot}$ the floor function
	from $[0,\infty)$ to $\N$. 
	For two collections $\CB$ and $\CC$ of subsets in a space $X$, we write $\CB \vee \CC := \left\{ B \cap C \colon B \in \CB, C \in \CC \right\}$. 
\end{notation}

Finally, we make the following assumption to simplify terminology.

\begin{notation} \label{standing-assumption-paper}
	Throughout the paper, we mean by ``\'{e}tale groupoids'' locally compact, Hausdorff, \'{e}tale topological groupoids. 
\end{notation}

\section{Amenability of extended coarse spaces}\label{sec:metric-amenability}

In this section, we recall and study the amenability of (uniformly
locally finite) extended metric spaces from a coarse geometric point
of view. In particular, we introduce a strengthening of the notion
of metric amenability called \emph{ubiquitous (metric) amenability},
which will be a central tool in our investigation of coarse structures
of groupoids. In particular, we prove a pair of lemmas at the end
of the section that display how ubiquitous amenability and non-amenability
lead to constrasting behaviors on bounded enlargements of arbitrary
finite subsets in metric spaces. 
\begin{defn}
\label{def:metric-boundaries}Recall an \textit{extended} metric space
is a metric space in which the metric is allowed to take the value
$\infty$. An extended metric space admits a unique partition into
ordinary metric spaces, called its \emph{coarse connected components},
such that two points have finite distance if and only if they are
in the same coarse connected component. An extended metric space is
called \textit{locally finite} if any bounded set has finite cardinality. 

Let $(X,d)$ be a locally finite extended metric space and $A$ be
a subset of $X$. For any $R>0$ we define the following boundaries
of $A$:
\begin{enumerate}[label=(\roman*)]
\item \emph{outer $R$-boundary}: $\partial_{R}^{+}A=\{x\in X\setminus A:d(x,A)\leq R\}$;
\item \emph{inner $R$-boundary}: $\partial_{R}^{-}A=\{x\in A:d(x,X\setminus A)\leq R\}$; 
\item \emph{$R$-boundary}: $\partial_{R}A=\{x\in X:d(x,A)\leq R\ \textrm{and}\ d(x,X\setminus A)\leq R\}$. 
\end{enumerate}
\end{defn}
\begin{rem}
\label{3.1} Let $(X,d)$ be an extended metric space. Suppose $A\subset X$
and $R>0$. It is straightforward to see $\partial_{R}^{+}A\subset\bar{B}(\partial_{R}^{-}A,R)$
and $\partial_{R}^{-}A\subset\bar{B}(\partial_{R}^{+}A,R)$. 
\end{rem}
The following concept of amenability of extended metric spaces was
introduced in \cite{BlockWeinberger1992Aperiodic} by Block and Weinberger
and further studied in \cite{AraLiLledoWu2018Amenability} by Ara,
Li, Lled\'{o}, and the second author.
\begin{defn}
\label{3.2} \label{def:metric-amenability}Let $(X,d)$ be a extended
locally finite metric space. 
\begin{enumerate}[label=(\roman*)]
\item For $R>0$ and $\varepsilon>0$, a finite non-empty set $F\subset X$
is called $(R,\varepsilon)$-F{\o}lner if it satisfies 
\[
\frac{|\partial_{R}F|}{|F|}\leq\varepsilon.
\]
We denote by $\opFol(R,\varepsilon)$ the collection of all $(R,\varepsilon)$-F{\o}lner
sets. 
\item The space $(X,d)$ is called \textit{amenable} if, for every $R>0$
and $\varepsilon>0$, there exists a $(R,\varepsilon)$-F{\o}lner set. 
\end{enumerate}
\end{defn}
The following elementary lemma shows that F{\o}lner sets can always be
``localized'' to a single coarse connected component. 
\begin{lem}
\label{lem:Folner-components}Let $(X,d)$ be a extended locally finite
metric space and let $X_{i}$, $i\in I$, be its coarse connected components.
Fix $R,\eps>0$ and let $F$ be an $(R,\varepsilon)$-F{\o}lner set of $X$.
Write $F_{i}=F\cap X_{i}$ for each $i\in I$. Then there is an $i_{0}\in I$
such that $F_{i_{0}}$ is also an $(R,\varepsilon)$-F{\o}lner set. \end{lem}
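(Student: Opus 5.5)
The plan is to show that the boundary of $F$ splits along the components, so that a weighted averaging argument forces one component to carry a proportionally small boundary.

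First I will check that
\[
\partial_R F=\bigsqcup_{i\in I}\partial_R F_i ,
\]
where each $\partial_R F_i$ is computed in $X$ (equivalently in $X_i$). Take $x\in X_i$. Points of other components are at infinite distance from $x$, so $d(x,F)=d(x,F_i)$. Likewise, $d(x,X\setminus F)$ is attained within $X_i$, where $X\setminus F$ coincides with $X\setminus F_i$. Points of $X_j\setminus F$ for $j\neq i$ also lie in $X\setminus F_i$, but they are at infinite distance from $x$, so they do not affect the value. Hence $d(x,X\setminus F)=d(x,X\setminus F_i)$.

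Next I will show that $\partial_R F_i\subseteq X_i$. A point $x\in\partial_R F_i$ satisfies $d(x,F_i)\le R<\infty$. If $F_i\neq\varnothing$ this forces $x\in X_i$. If $F_i=\varnothing$, then $d(x,F_i)=\infty$ and $\partial_R F_i=\varnothing$.

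Together these give $\partial_R F\cap X_i=\partial_R F_i$, and the sets $\partial_R F_i$ are pairwise disjoint. Therefore
\[
\sum_{i}|\partial_R F_i| = |\partial_R F| \le \varepsilon |F| = \varepsilon\sum_i |F_i| .
\]

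Only finitely many $F_i$ are nonempty, because $F$ is finite, and for the empty ones both sides contribute zero. Suppose, for contradiction, that $|\partial_R F_i|>\varepsilon|F_i|$ for every $i$ with $F_i\neq\varnothing$. Summing over these finitely many $i$ gives $|\partial_R F|>\varepsilon|F|$, since $F$ is nonempty and so the sum is over a nonempty index set. This contradicts the displayed inequality.

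Hence some $i_0$ has $F_{i_0}$ nonempty with $|\partial_R F_{i_0}|\le\varepsilon|F_{i_0}|$, which is exactly the statement that $F_{i_0}$ is $(R,\varepsilon)$-F{\o}lner.

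The only step requiring care is the localization $\partial_R F\cap X_i=\partial_R F_i$. The subtle point is the term $d(x,X\setminus F)$, because the complement changes when $F$ is replaced by $F_i$. It is resolved by noting that infinite distances across components never contribute to the infimum.
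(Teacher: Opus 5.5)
Your proof is correct and follows the same route as the paper: decompose $\partial_R F=\bigsqcup_{i}\partial_R F_i$, then sum the assumed inequalities $|\partial_R F_i|>\varepsilon|F_i|$ over the finitely many nonempty $F_i$ to get a contradiction. The only difference is that you justify the decomposition explicitly, in particular that $d(x,X\setminus F)=d(x,X\setminus F_i)$ for $x\in X_i$, whereas the paper states it as an observation.
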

\begin{proof}
Suppose for any $i\in I$, the set $F_{i}$ is not an $(R,\varepsilon)$-F{\o}lner
set, i.e., either $F_{i}=\varnothing$ or $\left|\partial F_{i}\right|>\varepsilon\left|F_{i}\right|$.
Observe that $\grave{\partial F=\bigsqcup}_{i\in I}\partial F_{i}$
and only finitely many among the $F_{i}$'s are non-empty. Thus we
would have 
\[
\left|\partial F\right|=\sum_{i\in I}\left|\partial F_{i}\right|>\sum_{i\in I}\varepsilon\left|F_{i}\right|=\varepsilon\left|F\right|,
\]
a contradiction to the assumption that $F$ is an $(R,\varepsilon)$-F{\o}lner
set. 
\end{proof}
In this paper, we also need the following stronger version of this
amenability.
\begin{defn}
\label{3.0}\label{def:uniform-metric-amenability} An extended metric
space $(X,d)$ is called \textit{ubiquitously amenable} (or \emph{ubiquitously
metrically amenable}) if, for every $R>0$ and $\varepsilon>0$, there
exists an $S>0$ such that for any $x\in X$, there is an $(R,\varepsilon)$-F{\o}lner
set $F$ in the ball $\bar{B}(x,S)$. 
\end{defn}
An extended metric space $(X,d)$ is called \textit{uniformly locally
finite}\footnote{This notion also appeared as \textit{bounded geometry} in the literature.}
if for any $R>0$, there is a uniform finite upper bound on the cardinalities
of all closed balls with radius $R$. i.e., 
\[
\sup_{x\in X}|\bar{B}(x,R)|<\infty.
\]
To simplify the notation, for uniformly locally finite space $(X,d)$,
we define a function $\FN_{(X,d)}:\mathbb{R}^{+}\to\mathbb{N}$ by 
\begin{equation}
\FN_{(X,d)}(r) :=\sup_{x\in X}|\bar{B}(x,r)|.\label{eq:N(r)}
\end{equation}
We may write $\FN_{X}$ or simply $\FN$ for $\FN_{(X,d)}$ if there is no risk of confusion. 
The following criteria for establishing amenability for uniformly
locally finite extended metric spaces is straightforward but useful. 
\begin{prop}
\label{3.3} \label{prop:metric-amenable-change-boundaries}
Let $(X,d)$ be a uniformly locally finite extended metric
space. 
The following are equivalent: 
\begin{enumerate}[label=(A\arabic*)]
\item \label{prop:metric-amenable-change-boundaries:A-original} The space $(X,d)$ is amenable. 
\item \label{prop:metric-amenable-change-boundaries:A-outer} For any $R,\varepsilon>0$, there is a finite set $F\subset X$ such that
$|\del_{R}^{+}F|\leq\varepsilon|F|$. 
\item \label{prop:metric-amenable-change-boundaries:A-inner} For any $R,\varepsilon>0$, there is a finite set $F\subset X$ such that
$|\del_{R}^{-}F|\leq\varepsilon|F|$. 
\item \label{prop:metric-amenable-change-boundaries:A-nbhd} For any $R,\varepsilon>0$, there is a finite set $F\subset X$ such that
$|\bar{B}(F,R)|\leq(1+\varepsilon)|F|$. 
\end{enumerate}
In addition, the following are equivalent: 
\begin{enumerate}[label=(U\arabic*)]
\item \label{prop:metric-amenable-change-boundaries:U-original} The space $(X,d)$ is ubiquitously amenable. 
\item \label{prop:metric-amenable-change-boundaries:U-outer} For any $R,\varepsilon>0$, there exists an $S>0$ such that for any $x\in X$, there is a finite set $F\subset X$ in the ball $\bar{B}(x,S)$ such that
$|\del_{R}^{+}F|\leq\varepsilon|F|$. 
\item \label{prop:metric-amenable-change-boundaries:U-inner} For any $R,\varepsilon>0$, there exists an $S>0$ such that for any $x\in X$, there is a finite set $F\subset X$ in the ball $\bar{B}(x,S)$ such that
$|\del_{R}^{-}F|\leq\varepsilon|F|$. 
\item \label{prop:metric-amenable-change-boundaries:U-nbhd} For any $R,\varepsilon>0$, there exists an $S>0$ such that for any $x\in X$, there is a finite set $F\subset X$ in the ball $\bar{B}(x,S)$ such that
$|\bar{B}(F,R)|\leq(1+\varepsilon)|F|$. 
\end{enumerate}
\end{prop}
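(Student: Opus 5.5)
The plan is to compare the four boundary quantities using uniform local finiteness, through the function $\FN=\FN_{(X,d)}$. The basic observations are
\[
\partial_R F=\partial_R^+F\sqcup\partial_R^-F,\qquad \bar{B}(F,R)=F\sqcup\partial_R^+F,
\]
together with Remark~\ref{3.1}. Since every closed ball of radius $R$ has at most $\FN(R)$ points, that remark yields
\[
|\partial_R^+F|\le \FN(R)\,|\partial_R^-F| \quad\text{and}\quad |\partial_R^-F|\le \FN(R)\,|\partial_R^+F|.
\]
Hence each of the quantities $|\partial_R F|$, $|\partial^+_R F|$, $|\partial^-_R F|$ and $|\bar{B}(F,R)|-|F|$ is bounded by $(1+\FN(R))$ times any other one. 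The whole proof then consists of adjusting $\varepsilon$.

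For the amenability equivalences, I will proceed as follows.

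\begin{itemize}
\item \ref{prop:metric-amenable-change-boundaries:A-outer} and \ref{prop:metric-amenable-change-boundaries:A-nbhd} are literally the same condition, because $|\bar B(F,R)|=|F|+|\partial_R^+F|$.
\item For \ref{prop:metric-amenable-change-boundaries:A-original}$\Rightarrow$\ref{prop:metric-amenable-change-boundaries:A-outer} and \ref{prop:metric-amenable-change-boundaries:A-original}$\Rightarrow$\ref{prop:metric-amenable-change-boundaries:A-inner}, use $\partial_R^\pm F\subseteq\partial_RF$ and take the same $F$.
\item For \ref{prop:metric-amenable-change-boundaries:A-outer}$\Rightarrow$\ref{prop:metric-amenable-change-boundaries:A-original}, given $R,\varepsilon$, apply \ref{prop:metric-amenable-change-boundaries:A-outer} with $\varepsilon'=\varepsilon/(1+\FN(R))$. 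Then
\[
|\partial_RF|\le |\partial^+_RF|+\FN(R)|\partial^+_RF|\le\varepsilon|F|.
\]
\item The implication \ref{prop:metric-amenable-change-boundaries:A-inner}$\Rightarrow$\ref{prop:metric-amenable-change-boundaries:A-original} is symmetric.
\end{itemize}

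One point needs checking: the set $F$ produced in \ref{prop:metric-amenable-change-boundaries:A-outer}--\ref{prop:metric-amenable-change-boundaries:A-nbhd} must be non-empty, since F\o lner sets are required to be non-empty. I will read the conditions as implicitly asking for non-empty $F$, since for $F=\varnothing$ they hold trivially. If the statement is meant literally, this is the only subtle point, and I would simply add non-emptiness.

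The ubiquitous versions follow in exactly the same way. The $S$ obtained for the modified $\varepsilon'$ works unchanged, because the same set $F\subseteq\bar B(x,S)$ is used and no enlargement of $F$ occurs. The only ingredient needed is that the constant $\FN(R)$ is uniform in $x$, which is exactly what uniform local finiteness provides.

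I expect no step to be a genuine obstacle. The only care needed is the counting inequality from Remark~\ref{3.1}, namely that a set contained in the $R$-neighbourhood of a set $A$ has cardinality at most $\FN(R)|A|$, and that bookkeeping is routine.
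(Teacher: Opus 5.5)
Your proposal is correct and is essentially the paper's own proof. The paper likewise invokes only $\partial_R F=\partial_R^+F\cup\partial_R^-F$, $\partial_R^+F=\bar B(F,R)\setminus F$, Remark~\ref{3.1} and the resulting bounds $|\partial_R^{\pm}F|\le \FN_X(R)\,|\partial_R^{\mp}F|$, then adjusts $\varepsilon$. Your observation that $F$ must be read as non-empty in (A2)--(A4) and (U2)--(U4) is also right: otherwise $F=\varnothing$ satisfies them trivially. The paper assumes this tacitly, and makes it explicit in its groupoid reformulation, Proposition~\ref{prop:fiberwise-amenability-metric}.
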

\begin{proof}
The proof is straightforward by observing that for any $R>0$ and
$F\subset X$ one has $\del_{R}F=\del_{R}^{+}F\cup\del_{R}^{-}F$
and $\del_{R}^{+}F=\bar{B}(F,R)\setminus F$ as well as Remark \ref{3.1} and the ensuing facts that
$|\del_{R}^{+}F|\leq\FN_{X}(R)|\del_{R}^{-}F|$ and $|\del_{R}^{-}F|\leq\FN_{X}(R)|\del_{R}^{+}F|$. 
\end{proof}
The following lemma is useful in establishing Proposition \ref{3.5}
and \ref{3.7}.
\begin{lem}
\label{3.4} Suppose that $(X,d)$ is a uniformly locally finite extended
metric space. Let $s>0$ and $n\in\mathbb{N}$. Then for any finite
set $F\subset X$ satisfying $|F|\geq n\cdot\FN_{X}(s)$, there exist
distinct points $x_{1},\dots,x_{n}\in F$ such that, for all $i,j\in\{1,\dots,n\}$
with $i\neq j$, one has $d(x_{i},x_{j})>s$. In particular there
exists at least $n$ many disjoint balls $\bar{B}(x_{i},s/2)$ for
$i=1,\dots,n$ in $\bar{B}(F,s/2)$. \end{lem}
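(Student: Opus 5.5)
The plan is a greedy selection argument. Set $F_0 = F$ and pick an arbitrary point $x_1 \in F_0$. Remove from the remaining set everything close to $x_1$ by defining $F_1 = F_0 \setminus \bar{B}(x_1, s)$. Since $|\bar{B}(x_1,s)| \leq \FN_X(s)$ by the definition \eqref{eq:N(r)}, we get $|F_1| \geq |F| - \FN_X(s) \geq (n-1)\FN_X(s)$.

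Inductively, having chosen $x_1,\dots,x_k$ with $k<n$ and
\[
F_k = F \setminus \bigcup_{i\leq k} \bar{B}(x_i,s), \qquad |F_k| \geq (n-k)\FN_X(s) \geq \FN_X(s) \geq 1,
\]
the set $F_k$ is nonempty. Note that $\FN_X(s)\geq 1$ because every ball contains its center. Pick any $x_{k+1}\in F_k$; then $d(x_{k+1},x_i)>s$ for all $i\leq k$. Setting $F_{k+1}=F_k\setminus \bar{B}(x_{k+1},s)$ again costs at most $\FN_X(s)$ points, which preserves the counting bound. After $n$ steps we obtain distinct points $x_1,\dots,x_n\in F$ with pairwise distances greater than $s$. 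Pairwise distinctness is automatic, since $d(x_i,x_j)>s>0$. If the metric takes the value $\infty$, this only helps.

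For the ``in particular'' part, each ball $\bar{B}(x_i,s/2)$ is contained in $\bar{B}(F,s/2)$ because $x_i\in F$. To see that the balls are disjoint, suppose $y \in \bar{B}(x_i,s/2)\cap \bar{B}(x_j,s/2)$ with $i\neq j$. The triangle inequality (valid in extended metric spaces) then gives $d(x_i,x_j)\leq s$, contradicting the choice of the $x_i$.

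There is no real obstacle here. The only point requiring care is the bookkeeping showing that each removal costs at most $\FN_X(s)$ elements, so that $n$ selections are possible.
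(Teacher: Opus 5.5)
Your proposal is correct and follows the paper's proof essentially verbatim. Both use a greedy selection in which each chosen point's closed $s$-ball removes at most $\FN_X(s)$ points of $F$, so at least $(n-k)\FN_X(s)\geq 1$ candidates remain at step $k$. Your triangle-inequality argument for the disjointness of the $s/2$-balls also supplies the ``in particular'' clause, which the paper leaves implicit.
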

\begin{proof}
We choose points $x_{1},\dots,x_{n}$ by induction. First pick $x_{1}\in F$.
Suppose that $x_{1},\dots,x_{k}$ has been defined such that $d(x_{i},x_{j})>s$
for all $1\leq i\neq j\leq k$. Then observe that 
\[
|\bigcup_{i=1}^{k}\bar{B}(x_{i},s)|\leq\sum_{i=1}^{k}|\bar{B}(x_{i},s)|\leq k\cdot\FN_{X}(s).
\]
Then choose $x_{k+1}\in F\setminus\bigcup_{i=1}^{k}\bar{B}(x_{i},s)$
whose cardinality satisfies 
\[
|F\setminus\bigcup_{i=1}^{k}\bar{B}(x_{i},s)|\geq(n-k)\cdot\FN_{X}(s).
\]
This finishes the proof. 
\end{proof}

The following proposition provides a major upgrade to the potency of ubiquitous amenability: instead of merely being able to find F\o{}lner sets (within a uniform distance) around all points, it allows us to turn all sets into F\o{}lner sets by tweaking their boundaries within a uniform distance. The groupoid incarnation of this result, Theorem~\ref{thm:fiberwise-amenable-dichotomy}\eqref{thm:fiberwise-amenable-dichotomy:amenable}, will be a crucial tool in the sequel paper \cite{MWAE}. 

\begin{prop}
\label{3.5} Let $(X,d)$ be a ubiquitously amenable and uniformly
locally finite extended metric space. Then for any $r \geq 0$ and any $\varepsilon>0$, 
there exists $S \geq 0$ such that for any finite subset $M\subseteq X$
there exists a finite set $F$ with $M\subseteq F\subseteq\bar{B}(M,S)$
and 
\[
\frac{|\partial_{r}^{+}F|}{|F|}\leq\varepsilon \quad \text{ or, equivalently, } \quad \frac{|\bar{B}(F,r)|}{|F|}\leq 1+\varepsilon.
\]
\end{prop}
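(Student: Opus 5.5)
The plan is to enlarge $M$ by attaching F\o{}lner sets that ubiquitous amenability supplies near every point of $M$, and to choose the parameters so that the combined F\o{}lner mass overwhelms the boundary that $M$ itself contributes. The construction has four steps.

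\emph{Step 1: parameters.} Write $N=\FN_X(r)$. Fix a large $R\geq r$ and a small $\eps'>0$, both depending only on $r$ and $\eps$. By \ref{prop:metric-amenable-change-boundaries:U-original}$\Leftrightarrow$\ref{prop:metric-amenable-change-boundaries:U-nbhd} of Proposition~\ref{3.3}, this gives $S_0$ such that every $x\in X$ has a finite $F_x\subseteq\bar{B}(x,S_0)$ with $|\del^+_R F_x|\leq\eps'|F_x|$. I will use one elementary observation repeatedly. If $\eps'<1/\FN_X(R)$ and $|F_x|<1/\eps'$, then $|\del_R^+F_x|<1$, so $\del_R^+F_x=\varnothing$. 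Thus $F_x$ is then a finite union of pieces that are $R$-isolated from the rest of $X$. Consequently each $F_x$ is either \emph{large} ($|F_x|\geq 1/\eps'$) or \emph{closed} (it has no outer $R$-boundary at all). Closed sets contribute nothing to boundaries, so they can be adjoined freely.

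\emph{Step 2: the candidate set.} Set $T=2S_0+2r$. Choose a maximal $T$-separated subset $Y\subseteq M$; then $M\subseteq\bar{B}(Y,T)$. Since $T>2S_0$, the sets $F_y$ for $y\in Y$ are pairwise disjoint. Define
\[
F=M\cup\bigcup_{y\in Y}F_y\cup\bigcup_{x\in M,\ F_x\text{ closed}}F_x\subseteq\bar{B}(M,S_0).
\]
Using $\del^+_r(A\cup B)\subseteq\del^+_rA\cup\del^+_rB$, the boundary satisfies
\[
|\del_r^+F|\leq|\del_r^+M|+\eps'\sum_{y\in Y}|F_y|\leq N|M|+\eps'\sum_{y\in Y}|F_y|,
\]
while $|F|\geq\sum_{y\in Y}|F_y|$.

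\emph{Step 3: the main obstacle.} The term $N|M|$ must be small compared with $|F|$. Only large pieces help here: $\sum_y|F_y|\geq|Y|/\eps'$ when all chosen pieces are large, and Lemma~\ref{3.4} gives $|Y|\geq|M|/\FN_X(T)$. This yields a ratio of order $N\eps'\FN_X(T)$. The difficulty is that $T$ depends on $S_0$, which depends on $\eps'$, so this is circular. I would break the circularity by not estimating $|\del_r^+M|$ crudely. Instead, I would arrange that every point of $M$ within distance $r$ of $X\setminus F$ lies within distance $R$ of some chosen large $F_y$, and then use $R\geq r$ to bound $\del_r^+F$ by the union of the sets $\del^+_R F_y$.

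To achieve this, I would take $Y$ to be a maximal separated net and, for each $y$, replace $F_y$ by a F\o{}lner set that actually contains $\bar{B}(y,T)\cap M$. To produce such a set, I would apply the ubiquity step twice, at two scales: first obtain a F\o{}lner set at scale $R_1$ with error $\eps_1$ near $y$, then absorb the finitely many (at most $\FN_X(T)$) points of $\bar{B}(y,T)\cap M$ by taking its $R_1$-neighbourhood. The F\o{}lner property at scale $R_1\gg T$ keeps the boundary ratio small after this absorption. If this fails, the fallback is an iterative exhaustion: repeatedly adjoin disjoint large F\o{}lner pieces to the part of $M$ not yet absorbed, each round cutting the unabsorbed mass by a fixed factor, and stop after a bounded number of rounds determined by $\eps$ and $N$. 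This keeps $S$ uniform.

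\emph{Step 4: conclusion.} Finally, $|\del^+_rF|\leq\eps|F|$ is equivalent to $|\bar{B}(F,r)|\leq(1+\eps)|F|$, because $\bar{B}(F,r)=F\sqcup\del_r^+F$. The constant $S$ is the sum of the radii used in the construction, and none of them depends on $M$.
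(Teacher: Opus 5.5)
\textbf{There is a genuine gap in Step 3.} The bookkeeping in Steps 1, 2 and 4 is correct, and Step 3 correctly diagnoses the circularity: disjointness needs $T>2S_0$, $S_0$ depends on $\eps'$, and the estimate needs $\eps'\FN_X(r)\FN_X(T)$ small. But Step 3 does not break this loop. It offers two unproved alternatives. Each presupposes F{\o}lner sets adapted to $M$: large ones, or ones containing or lying within distance $R$ of a prescribed chunk $\bar{B}(y,T)\cap M$. Ubiquitous amenability only guarantees \emph{some} F{\o}lner set in each $S_0$-ball, and your argument must work for whichever one it hands you.

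To see that this is fatal, let $X$ be the $3$-regular tree with, for every vertex $v$ and every $j\in\N$, an extra point $p_{v,j}$ joined to $v$ by an edge of length $2^j$ (path metric). This space is uniformly locally finite and ubiquitously amenable. Indeed, every point is within distance $3R+1$ of some $p_{v,j}$ with $2^j>R$, and such a point is $R$-isolated, so $\{p_{v,j}\}$ is $(R,0)$-F{\o}lner. Suppose all your $F_x$ are such singletons, and let $M$ be a ball of radius $\rho$ in the tree.
\begin{itemize}
\item Your set $F$ from Step 2 has at most $2|M|$ points. But $\partial^+_1F$ contains the tree sphere of radius $\rho+1$, which has more than $|M|$ points.
\item The two-scale absorption absorbs nothing. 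An $R_1$-isolated set is its own $R_1$-neighbourhood, and no point of $M$ lies within distance $R$ of it.
\item The ``iterative exhaustion'' asserts a fixed-factor decrease per round with no mechanism behind it.
\end{itemize}
Even when absorption does succeed, pieces containing $\bar{B}(y,T)\cap M$ overlap along a $T$-net. The overlap multiplicity depends on $S_1=S_1(R_1,\eps_1)$, which recreates the loop.

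\textbf{The missing idea is a growth dichotomy for $M$ itself, which is how the paper proceeds.} Fix $s$ from ubiquity for $(r,\eps/2)$ once and for all. Choose $l$ with $(1+\eps)^l\ge\lceil 2\FN_X(r)/\eps\rceil\,\FN_X(2s)$, and iterate $F_0=M$, $F_{k+1}=\bar{B}(F_k,r)$. There are two cases.
\begin{itemize}
\item Some $F_k$ with $k<l$ already satisfies $|\partial^+_rF_k|\le\eps|F_k|$. Then one takes $F=F_k$, with no F{\o}lner pieces at all.
\item Otherwise $|\bar{B}(M,lr)|\ge(1+\eps)^l|M|$. Lemma~\ref{3.4} then gives $m=\lceil 2\FN_X(r)/\eps\rceil|M|$ points of $\bar{B}(M,lr)$ pairwise more than $2s$ apart, so their ubiquitous F{\o}lner sets are disjoint. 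Each has at least one point, so $F=M\cup\bigsqcup_iF_{x_i}$ satisfies $|F|\ge m$. This swamps $|\partial^+_rM|\le\FN_X(r)|M|$, while the pieces contribute at most $\frac{\eps}{2}|F|$ to the boundary.
\end{itemize}
The point is that the \emph{number} of pieces beats the boundary of $M$. They are harvested from the expanding neighbourhood of $M$ rather than from a net of $M$, so their size is irrelevant and your large-or-closed device is unnecessary. There is no circularity because $s$ is fixed before $l$. In the tree example this is exactly what happens: $\bar{B}(M,lr)$ contains far more than $|M|$ well-separated vertices, each within distance $s$ of its own isolated point $p_{v,j}$.
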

\begin{proof}
Let $r \geq 0$ and $\varepsilon>0$ be given. 
Since $(X,d)$ is ubiquitously amenable,
there is an $s \geq 0$ such that for all $x\in X$ there is a finite set
$F_{x}\subseteq\bar{B}(x,s)$ such that 
\[
\frac{|\partial_{r}^{+}F_{x}|}{|F_{x}|}\leq\varepsilon/2.
\]
Now define 
\[
n=\ceilstar{\frac{2\FN_{X}(r)}{\varepsilon}}\cdot\FN_{X}(2s),\ l = \ceilstar{\log_{(1+\varepsilon)}n},\ 
\textrm{and}\ \ S=l r+s+1.
\]
To show this choice of $S$ satisfies the requirement, we fix a finite set $M$ in $X$ and aim to find $F$ that satisfies the desired condition. 

To this end, we define $F_{0}=M$ and, recursively, $F_{k+1}=\bar{B}(F_{k},r)$ for $k = 1, 2, \ldots$.  
It is easily proved by induction that $M \subseteq F_{k} \subseteq \bar{B}(M,k r)$ for any $k \in \N$. 
We discuss two cases: 

\begin{enumerate}
	\item[Case 1:] If there is a $k_0 \in \left\{ 0, 1, \ldots, l - 1 \right\}$ such that 
	\[
		\frac{|\partial_{r}^{+}F_{k_0}|}{|F_{k_0}|}\leq\varepsilon \; ,
	\]
	then it suffices to choose $F := F_{k_0}$, since 
	\[
		M \subseteq F_{k_0} \subseteq \bar{B}(M,k_0 r) \subseteq \bar{B}(M,S) \; .
	\]
	\item[Case 2:] Now suppose, on the contrary, for any $k \in \left\{ 0, 1, \ldots, l - 1 \right\}$, we have 
	\[
		\frac{|\partial_{r}^{+}F_k|}{|F_k|}>\varepsilon
	\]
	and thus in particular 
	\[
		\frac{|F_{k+1}|}{|F_{k}|} = \frac{|\bar{B}(F_k,r)|}{|F_k|}>1+\varepsilon \; .
	\]
	It follows that 
	\begin{align*}
		\frac{|\bar{B}(M,l r)|}{|M|} \geq \frac{|F_{m}|}{|F_{0}|} = \prod_{k=0}^{m-1} \frac{|F_{k+1}|}{|F_{k}|} \geq (1+\varepsilon)^{\ceilstar{\log_{(1+\varepsilon)}n}}>n=\ceilstar{\frac{2\FN_{X}(r)}{\varepsilon}}\cdot\FN_{X}(2s) \; .
	\end{align*}
	Now write $m=\ceilstar{2\FN_{X}(r)/\varepsilon}\cdot|M|$. Lemma \ref{3.4}
	implies that there are distinct points $x_{1},\dots,x_{m}\in\bar{B}(M,l r)$
	such that $d(x_{i},x_{j})>2s$. 
	Noting that $F_{x_{i}}\subset\bar{B}(x_{i},s)$ for $i = 1, \ldots, m$, we may write 
	\[
		F := M\cup(\bigsqcup_{i=1}^{m}F_{x_{i}})\subseteq\bar{B}(M,S)
	\]
	Then we have 
	\begin{align*}
	\frac{|\partial_{r}^{+}(F)|}{|F|} & \leq\frac{|\partial_{r}^{+}(M)|+\sum_{i=1}^{m}|\partial_{r}^{+}(F_{x_{i}})|}{\sum_{i=1}^{m}|F_{x_{i}}|}\\
	& =\frac{|\partial_{r}^{+}(M)|}{|M|}\cdot\frac{|M|}{\sum_{i=1}^{m}|F_{x_{i}}|}+\sum_{i=1}^{m}\frac{|\partial_{r}^{+}(F_{x_{i}})|}{|F_{x_{i}}|}\cdot\frac{|F_{x_{i}}|}{\sum_{i=1}^{m}|F_{x_{i}}|}.
	\end{align*}
	Since $|\partial_{r}^{+}(M)|\leq|\bar{B}(M,r)|\leq\FN_{X}(r)|M|$ and
	\[
	\frac{|M|}{\sum_{i=1}^{m}|F_{x_{i}}|}\leq\frac{|M|}{m}\leq\frac{\varepsilon}{2\FN_{X}(r)},
	\]
	one has 
	\[
	\frac{|\partial_{r}^{+}(M)|}{|M|}\cdot\frac{|M|}{\sum_{i=1}^{m}|F_{x_{i}}|}\leq\frac{\varepsilon}{2}.
	\]
	On the other hand, by the definition of all $F_{x_{i}}$ one has 
	\[
	\sum_{i=1}^{m}\frac{|\partial_{r}^{+}(F_{x_{i}})|}{|F_{x_{i}}|}\cdot\frac{|F_{x_{i}}|}{\sum_{i=1}^{m}|F_{x_{i}}|}\leq\frac{\varepsilon}{2}\cdot\sum_{i=1}^{m}\frac{|F_{x_{i}}|}{\sum_{i=1}^{m}|F_{x_{i}}|}=\frac{\varepsilon}{2}.
	\]
	This implies that 
	\[
	\frac{|\partial_{r}^{+}(F)|}{|F|}\leq\varepsilon,
	\]
\end{enumerate}
This completes the proof. 
\end{proof}
We find Case~2 of the above proof surprising in that here we constructed a F\o{}lner set out of a Ponzi-scheme-like structure, which is usually considered antithetical to being F\o{}lner. 

In contrast to Proposition~\ref{3.5}, we show below a paradoxical phenomenon for sets in nonamenable
extended metric spaces. 
\begin{prop}
\label{3.7} Let $(X,d)$ be a uniformly locally finite extended metric
space, which is not amenable. For any $r \geq 0$ and any $n\in\mathbb{N}$,
there exists $S \geq 0$ such that for any finite subset $M \subseteq X$, there
are at least $n|M|$ many disjoint $r$-balls, i.e., $\bar{B}(x_{i},r)$ for $i=1,\dots,n|M|$,
contained in $\bar{B}(M,S)$. 
\end{prop}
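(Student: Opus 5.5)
Non-amenability gives uniform expansion of every finite set, and expansion together with Lemma~\ref{3.4} produces many disjoint balls. Since $(X,d)$ is not amenable, criterion \ref{prop:metric-amenable-change-boundaries:A-nbhd} of Proposition~\ref{3.3} fails. Hence there are $R_0>0$ and $\varepsilon_0>0$ such that every nonempty finite $F\subseteq X$ satisfies $|\bar{B}(F,R_0)|>(1+\varepsilon_0)|F|$. The aim is to iterate this expansion until $\bar{B}(M,\cdot)$ has at least $n\cdot\FN_X(2r)\cdot|M|$ points.

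Given $r\geq 0$ and $n\in\N$, set $N:=n\cdot\FN_X(2r)$ and $l:=\ceilstar{\log_{(1+\varepsilon_0)}N}$. Fix a finite nonempty $M$. (If $M=\varnothing$ there is nothing to prove.) Put $F_0=M$ and $F_{k+1}=\bar{B}(F_k,R_0)$. By induction $F_k\subseteq\bar{B}(M,kR_0)$, which follows from the triangle inequality. Also $|F_{k+1}|>(1+\varepsilon_0)|F_k|$, so
\[
|\bar{B}(M,lR_0)|\geq|F_l|>(1+\varepsilon_0)^l|M|\geq N|M| = n|M|\cdot\FN_X(2r).
\]
This is the same telescoping estimate as in Case~2 of the proof of Proposition~\ref{3.5}. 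Here it holds unconditionally, because non-amenability forbids Case~1 at every step.

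Now apply Lemma~\ref{3.4} to the finite set $\bar{B}(M,lR_0)$, with $s=2r$ and $n|M|$ in place of $n$. This gives points $x_1,\dots,x_{n|M|}\in\bar{B}(M,lR_0)$ with pairwise distances $>2r$, so the balls $\bar{B}(x_i,r)$ are pairwise disjoint. Each such ball lies in $\bar{B}(M,lR_0+r)$, so $S:=lR_0+r$ works. This $S$ depends only on $r$, $n$ and the fixed constants $R_0,\varepsilon_0$, not on $M$.

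The one point needing care is the first step: extracting a uniform expansion constant from non-amenability. This is exactly the negation of \ref{prop:metric-amenable-change-boundaries:A-nbhd}, whose equivalence with amenability is where uniform local finiteness enters. One also checks that finiteness of $\bar{B}(M,lR_0)$, which follows from local finiteness, makes Lemma~\ref{3.4} applicable. Everything else is bookkeeping.
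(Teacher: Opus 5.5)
Your proposal is correct and is essentially the paper's own proof: you negate criterion~\ref{prop:metric-amenable-change-boundaries:A-nbhd} to get a uniform expansion factor $1+\varepsilon_0$ at scale $R_0$, iterate it so that $|\bar{B}(M,lR_0)|\geq n\cdot\FN_X(2r)\cdot|M|$, and apply Lemma~\ref{3.4} with $s=2r$ to obtain $n|M|$ disjoint $r$-balls in $\bar{B}(M,lR_0+r)$; you correctly require separation $>2r$, whereas the paper's text writes $>r$. The only nit is that when $l=0$ the strict inequality $|F_l|>(1+\varepsilon_0)^l|M|$ fails and should read $\geq$, which is all the argument needs.
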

\begin{proof}
Let $r \geq 0$ and $n\in\mathbb{N}$ be given. Since $(X,d)$ is not amenable,
there is an $\varepsilon>0$ and $R  > 0$ such that for any finite subset
$F \subseteq X$ one has 
\[
	\frac{|\bar{B}(F,R)|}{|F|}>1+\varepsilon.
\]
Choose a $k\in\mathbb{N}$ such that $(1+\varepsilon)^{k}\geq n\cdot\FN_{X}(2r)$.
Then define $F_{0}:=M$ and, recursively, $F_{k+1}:=\bar{B}(F_{k},R)$ for $k\in\mathbb{N}$. This process implies that 
for any finite subset $M\subseteq X$, one has 
\[
\frac{|\bar{B}(M,kR)|}{|M|}\geq(1+\varepsilon)^{k}\geq n\cdot\FN_{X}(2r).
\]
We write $S=kR+r$. Then Lemma \ref{3.4} implies that there exist
distinct points $x_{1},\dots,x_{n|M|}\in\bar{B}(M,kR)$ such that
$d(x_{i},x_{j})>r$ for any distinct $i, j\in\{1,\dots,n|M|\}$. In particular
there exists at least $n|M|$ disjoint balls $\bar{B}(x_{i},r)$, for
$i=1,\dots,n|M|$, in $\bar{B}(M,S)$. 
\end{proof}

\section{Coarse geometry of \'{e}tale groupoids \label{sec:coarse-geometry}}

A fundamental and motivating fact in coarse geometry is that one can
always assign a length function to a countable discrete group, which
induces a (right-)invariant proper metric on the group, in a way unique
up to coarse equivalence \LyXbar \LyXbar{} for a finitely generate
group, this amounts to taking the graph metric of the Cayley graph
(after fixing a set of generators). In this procedure, the amenability
of the group itself is equivalent to the metric amenability mentioned
in the last section of the resulting metric space. 

Motivated by this, one may establish a similar framework for $\sigma$-compact \'{e}tale groupoids, realizing them
as extended metric spaces by equipping metrics to all the source (or
range) fibers in a uniform and invariant manner. We remark that  fiberwise defined Caylay graph has also been considered in \cite{Nekrashevych2019Simple} for ample groupoids to study topological full groups. We start our discussion
without the topological structure. 
\begin{defn}
\label{def:invariant-fiberwise-extended-metric}An extended metric $\rho$
on a groupoid $\CG$ is \end{defn}
\begin{itemize}
\item \emph{invariant} (or, more precisely, \emph{right-invariant}) if,
for any $x,y,z\in\CG$ with $s(x)=s(y)=r(z)$, we have $\rho(x,y)=\rho(xz,yz)$; 
\item \emph{fiberwise} (or, more precisely, \emph{source-fiberwise}) if,
for any $x,y\in\CG$, we have $\rho(x,y)=\infty$ if and only if $s(x)\neq s(y)$.
\end{itemize}

The significance of these properties is demonstrated in the following characterization. 

\begin{lem}\label{lem:invariant-fiberwise-extended-metric}
	An extended metric $\rho$ on a groupoid $\CG$ is right-invariant and source-fiberwise if and only if 
	\[
		\CG = \bigcup_{r \geq 0} \bar{B}\left(\GU,r\right)
	\]
	and 
	\[
		\bar{B}(M,r) = \bar{B}\left(\GU,r\right) \cdot M \quad \text{ for any } M \subseteq \CG \text{ and } r \geq 0 \; .
	\]
\end{lem}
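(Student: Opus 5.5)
We prove the two directions separately, working directly from the definitions. Recall that $\bar{B}(\GU,r)=\{x\in\CG: \rho(x,u)\le r \text{ for some } u\in\GU\}$. The key observation is that for a source-fiberwise $\rho$, the only unit at finite distance from $x$ is $s(x)$. Hence $\bar{B}(\GU,r)=\{x:\rho(x,s(x))\le r\}$.

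\emph{($\Rightarrow$).} First, $\CG=\bigcup_r\bar{B}(\GU,r)$: for any $x$ we have $s(x)=s(s(x))$, so by fiberwise-ness $\rho(x,s(x))<\infty$.

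For the second identity, take $y\in\bar{B}(M,r)$, so $\rho(y,m)\le r$ for some $m\in M$. Then $\rho(y,m)<\infty$ forces $s(y)=s(m)$, so $z:=ym^{-1}$ is defined and $y=zm$. By right-invariance (right-multiplying by $m^{-1}$, legitimate since $s(y)=s(m)=r(m^{-1})$),
\[
\rho(z,r(m))=\rho(ym^{-1},mm^{-1})=\rho(y,m)\le r ,
\]
and $r(m)\in\GU$. So $z\in\bar{B}(\GU,r)$, which gives $y\in\bar{B}(\GU,r)\cdot M$.

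Conversely, let $zm$ be a product with $z\in\bar{B}(\GU,r)$, $m\in M$ and $s(z)=r(m)$. Then $\rho(z,s(z))\le r$, and invariance gives
\[
\rho(zm,m)=\rho(zm,s(z)m)=\rho(z,s(z))\le r ,
\]
so $zm\in\bar{B}(M,r)$.

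\emph{($\Leftarrow$).} For fiberwise-ness, suppose first that $\rho(x,y)<\infty$, say $\rho(x,y)\le r$. Then $x\in\bar{B}(\{y\},r)=\bar{B}(\GU,r)\cdot y$, so $x=zy$ and $s(x)=s(y)$. Now suppose $s(x)=s(y)$. The first hypothesis gives $r$ with $xy^{-1}\in\bar{B}(\GU,r)$. Then
\[
x=(xy^{-1})y\in\bar{B}(\GU,r)\cdot\{y\}=\bar{B}(\{y\},r),
\]
so $\rho(x,y)\le r<\infty$.

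For invariance, take $x,y,z$ with $s(x)=s(y)=r(z)$. We claim
\[
\rho(x,y)=\min\{r\ge 0:\ xy^{-1}\in\bar{B}(\GU,r)\}.
\]
Indeed, $\rho(x,y)\le r$ iff $x\in\bar{B}(\{y\},r)=\bar{B}(\GU,r)y$. Since $x=wy$ forces $w=xy^{-1}$, this holds iff $xy^{-1}\in\bar{B}(\GU,r)$. Closed balls make the minimum attained, and the quantity is finite. Since $(xz)(yz)^{-1}=xy^{-1}$, the same characterization applied to the pair $xz,yz$ yields $\rho(xz,yz)=\rho(x,y)$.

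The only delicate point is this last step: expressing $\rho(x,y)$ purely through the element $xy^{-1}$ via the ball identity with singleton $M$. Everything else is bookkeeping of composability conditions.
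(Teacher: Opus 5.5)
Your proof is correct and takes essentially the same route as the paper. For the forward direction you translate on the right by $m^{-1}$ and use that $s(z)$ is the only unit at finite distance from $z$. For the converse you use the singleton case $\bar{B}(\{y\},r)=\bar{B}(\GU,r)\cdot y$, which shows that $\rho(x,y)$ depends only on $xy^{-1}$; this lets you write out the fiberwiseness and invariance checks that the paper calls straightforward.
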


\begin{proof}
	For the ``only if'' direction, we see, under the assumption that $\rho$ is right-invariant and source-fiberwise, that given any $M \subseteq \CG$, any $r \geq 0$ and any $x \in \CG$, we have: 
	\begin{itemize}
		\item $\rho(x , s(x)) < \infty$, which proves the first desired equality; 
		\item $x \in \bar{B}(M,r)$ if and only if $\rho(x,y) \leq r$ for some $y \in M$ if and only if there exists $y \in M$ such that $s(x) = s(y)$ and $\rho \left( x y^{-1},y y^{-1} \right) \leq r$ if and only if $x = z y$ for some $y \in M$ and some $z \in \CG$ with $\rho(z, s(z)) \leq r$ if and only if $x = z y$ for some $y \in M$ and some $z \in \bar{B}\left(\GU,r\right)$, which proves the second desired equality. 
	\end{itemize}
	
	For the ``if'' direction, we see, under the assumption that the second equality in the statement hold, that for any $x,y\in\CG$ and $r \geq 0$, we have: 
	\begin{itemize}
		\item $\rho(x,y) \leq r$ if and only if $x \in \bar{B}(\{y\},r)$ if and only if $x \in \bar{B}\left(\GU,r\right) \cdot \{y\}$ if and only if $s(x)=s(y)$ and $x y^{-1} \in \bar{B}\left(\GU,r\right)$. 
	\end{itemize}
	It is straightforward to verify that $\rho$ is right-invariant and source-fiberwise using this property above as well as the assumption $\CG = \bigcup_{r \geq 0} \bar{B}\left(\GU,r\right)$. 
\end{proof}

Just as in the case of groups, it is more efficient to encode invariant
metrics by length functions. To the best knowledge of the authors,
the discussion of length functions on \'{e}tale groupoids first appeared
in \cite[Definition 2.21]{Oyono-OyonoYu2019Quantitative}, with ideas
from J.-L.~Tu. Our terminology differs slightly. 
\begin{defn}
\label{def:length-function}Recall a length function on a groupoid
$\CG$ is a function $\ell:\CG\rightarrow[0,\infty)$ satisfying,
for any $x,y\in\CG$, 
\begin{enumerate}[label=(\roman*)]
\item  $\ell(x)=0$ if and only if $x\in\CG^{(0)}$, 
\item (symmetricity) $\ell(x)=\ell(x^{-1})$, and 
\item (subadditivity) $\ell(xy)\leq\ell(x)+\ell(y)$ if $x$ and $y$ are
composable in $\CG$. 
\end{enumerate}
\end{defn}

On a groupoid $\CG$, there is a canonical one-to-one correspondence
between length functions and invariant fiberwise extended metrics.
On the one hand, given any length function $\ell$ on $\CG$, we associate
an extended metric $\rho_{\ell}$ by declaring, for $x,y\in\CG$,
\[
\rho_{\ell}(x,y)=\begin{cases}
\ell(xy^{-1}), & s(x)=s(y)\\
\infty, & s(x)\not=s(y)
\end{cases}.
\]
On the other hand, given any invariant fiberwise extended metric $\rho$
on $\CG$, we associate a function 
\[
\ell_{\rho}:\CG\to[0,\infty),\quad g\mapsto\rho(g,s(g)),
\]
which does not take the value $\infty$ since $\rho$ is fiberwise. 
\begin{lem}
\label{lem:length-function-metric}On a groupoid $\CG$, the above
assignments give rise to a pair of bijections between length functions
and invariant fiberwise extended metrics. \end{lem}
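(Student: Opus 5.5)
We will verify four things directly from the definitions: each assignment lands in the claimed class, and the two assignments are mutually inverse.

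\emph{From a length function to a metric.} Let $\ell$ be a length function and $\rho=\rho_\ell$. We check the axioms of an extended metric.
\begin{itemize}
\item If $s(x)=s(y)$, then $xy^{-1}$ is defined, so $\rho_\ell(x,y)=\ell(xy^{-1})$ makes sense.
\item Definiteness: $\ell(xy^{-1})=0$ iff $xy^{-1}\in\GU$. Since $r(xy^{-1})=r(x)$, this means $xy^{-1}=r(x)$, hence $x=y$.
\item Symmetry: $(xy^{-1})^{-1}=yx^{-1}$, so symmetry follows from symmetricity of $\ell$.
\item Triangle inequality: it is trivial if some distance is $\infty$. Otherwise $s(x)=s(y)=s(z)$, and $xz^{-1}=(xy^{-1})(yz^{-1})$ with the two factors composable, so subadditivity gives it.
\end{itemize}
The metric is fiberwise by construction, since $\ell$ is finite-valued. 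It is invariant because for $r(z)=s(x)=s(y)$ we have $(xz)(yz)^{-1}=xzz^{-1}y^{-1}=xy^{-1}$.

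\emph{From a metric to a length function.} Let $\rho$ be an invariant fiberwise extended metric and $\ell_\rho(g)=\rho(g,s(g))$. This is finite because $s(s(g))=s(g)$.
\begin{itemize}
\item Vanishing: $\ell_\rho(g)=0$ iff $g=s(g)$, iff $g\in\GU$ (for a unit $u$ we have $s(u)=u$).
\item Symmetricity: apply invariance with $z=g^{-1}$, which is allowed since $r(g^{-1})=s(g)$. This gives $\rho(g,s(g))=\rho(gg^{-1},s(g)g^{-1})=\rho(r(g),g^{-1})=\ell_\rho(g^{-1})$, because $s(g^{-1})=r(g)$.
\item Subadditivity: for composable $x,y$, the triangle inequality gives $\rho(xy,s(y))\le\rho(xy,y)+\rho(y,s(y))$. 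By invariance with $z=y$, $\rho(xy,y)=\rho(xy,s(x)y)=\rho(x,s(x))=\ell_\rho(x)$. Here we use $s(x)=r(y)$ and $s(xy)=s(y)$.
\end{itemize}

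\emph{The assignments are inverse.}
\begin{itemize}
\item $\ell_{\rho_\ell}(g)=\ell(g\,s(g)^{-1})=\ell(g)$.
\item For $\rho$ invariant and fiberwise, and $s(x)=s(y)$: invariance with $z=y^{-1}$ gives $\rho_{\ell_\rho}(x,y)=\rho(xy^{-1},s(xy^{-1}))=\rho(xy^{-1},r(y))=\rho(xy^{-1},yy^{-1})=\rho(x,y)$. Here we use $s(xy^{-1})=r(y)$ and $r(y^{-1})=s(y)=s(x)$.
\item If $s(x)\ne s(y)$, both sides are $\infty$ because $\rho$ is fiberwise.
\end{itemize}

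No step is a real obstacle. The only care needed is keeping track of which composable pairs the invariance is applied to, i.e. checking $r(z)=s(x)=s(y)$ each time.
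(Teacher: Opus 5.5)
Your proposal is correct and follows the same route as the paper. The paper simply asserts that the checks are routine: definiteness, symmetry and subadditivity of $\ell$ give definiteness, symmetry and the triangle inequality for $\rho_\ell$, and invariance runs the correspondence backwards. You carry out each verification explicitly, with the composability conditions $r(z)=s(x)=s(y)$ tracked correctly, and you also check that the two assignments are mutually inverse, which the paper leaves implicit.
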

\begin{proof}
It is routine to verify that $\rho_{\ell}$ as defined above is indeed
an extended metric, where positive definiteness and symmetricity of
$\ell$ lead to those of $\rho_{\ell}$ and subadditivity leads to
the triangle inequality. It is also clear that $\rho_{\ell}$ is invariant
and fiberwise. On the other hand, to verify that $\ell_{\rho}$ is
a length function, we see, with the help of invariance, the same correspondence
between the conditions in the opposite direction. 
\end{proof}
Now we focus on \'{e}tale groupoids. We show, in analogy with the case
of groups, that a $\sigma$-compact \'{e}tale
groupoid determines, up to coarse equivalence, a canonical invariant
fiberwise extended metric that enjoys the following properties. 
\begin{defn}
\label{def:coarse-length-function}Let $\ell:\CG\to[0,\infty)$ be
a length function on an \'{e}tale groupoid $\CG$. For any subset $K\subseteq\CG$,
we write 
\[
\overline{\ell}(K)=\sup_{x\in K}\ell(x).
\]
We say $\ell$ is 
\begin{itemize}
\item \textit{proper} if, for any $K\subset\CG\setminus\CG^{(0)}$, $\overline{\ell}(K)<\infty$
implies that $K$ is precompact, 
\item \textit{controlled} if, for any $K\subset\CG$, $\overline{\ell}(K)<\infty$
is implied by that $K$ is precompact, and
\item \emph{coarse} if it is both proper and controlled. 
\item \emph{continuous }if it is a continuous function with regard to the
topology of $\CG$. 
\end{itemize}
Two length functions $\ell_{1},\ell_{2}$ are said to be \emph{coarsely
equivalent} if for any $r>0$, we have 
\[
\sup\left\{ \ell_{1}(x):\ell_{2}(x)\leq r\right\} <\infty\quad\mbox{and}\quad\sup\left\{ \ell_{2}(x):\ell_{1}(x)\leq r\right\} <\infty.
\]

\end{defn}
It is straightforward to see that coarse equivalence of length functions
is indeed an equivalence relation, and a continuous length function
is controlled. We may also express coarse equivalence using \emph{control
functions}, as is common in coarse geometry. 
\begin{lem}
\label{lem:coarse-equivalence-length}Two length functions $\ell_{1},\ell_{2}$
are coarsely equivalent if and only if there are non-decreasing unbounded
functions $f_{+},f_{-}:[0,\infty)\to[0,\infty)$ (sometimes referred
to as \emph{control functions}) such that 
\begin{equation}
f_{-}\left(\ell_{1}(x)\right)\leq\ell_{2}(x)\leq f_{+}\left(\ell_{1}(x)\right)\quad\mbox{for any }x\in\CG.\label{eq:coarse-equivalence-length-control}
\end{equation}
Moreover, we may also assume $f_{+}(0)=f_{-}(0)=0$ in the above. \end{lem}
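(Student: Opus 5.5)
The plan is to prove the two directions separately, with the ``if'' direction immediate and the ``only if'' direction obtained from the supremum of $\ell_2$ over sublevel sets of $\ell_1$.

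For the ``if'' direction, assume control functions $f_\pm$ as in \eqref{eq:coarse-equivalence-length-control} exist. Given $r>0$, any $x$ with $\ell_1(x)\le r$ satisfies $\ell_2(x)\le f_+(\ell_1(x))\le f_+(r)$, because $f_+$ is non-decreasing; hence $\sup\{\ell_2(x):\ell_1(x)\le r\}\le f_+(r)<\infty$. For the other supremum, $f_-$ is non-decreasing and unbounded, so we may pick $t$ with $f_-(t)>r$. Then $\ell_2(x)\le r$ forces $f_-(\ell_1(x))\le r<f_-(t)$, which by monotonicity gives $\ell_1(x)<t$. So that supremum is at most $t$.

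For the ``only if'' direction, we build the control functions explicitly. Set
\[
g(t):=\sup\{\ell_2(x):\ell_1(x)\le t\},
\]
which is finite by coarse equivalence, non-decreasing in $t$, and satisfies $g(0)=0$ since $\ell_1(x)=0$ forces $x\in\GU$ and hence $\ell_2(x)=0$ (here we take $\sup\varnothing=0$). Then define $f_+(t):=g(t)+t$. This function is non-decreasing, unbounded, vanishes at $0$, and satisfies $\ell_2(x)\le g(\ell_1(x))\le f_+(\ell_1(x))$.

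For the lower bound, set $h(s):=\sup\{\ell_1(x):\ell_2(x)\le s\}$, which is finite, non-decreasing, and has $h(0)=0$. We want $f_-$ with $f_-(\ell_1(x))\le \ell_2(x)$ for all $x$. Define
\[
f_-(t):=\inf\{s\ge 0: h(s)\ge t\}
\]
(the infimum of $\varnothing$ being taken as $\infty$), or more concretely $f_-(t)=\inf\{\ell_2(x):\ell_1(x)\ge t\}$. Monotonicity of $f_-$ is clear.

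The inequality follows by evaluating at $t=\ell_1(x)$: the element $x$ itself is admissible in the concrete infimum, so $f_-(\ell_1(x))\le\ell_2(x)$. Unboundedness holds because $\ell_2(y)\le s$ implies $\ell_1(y)\le h(s)$; hence for $t>h(s)$ every $y$ with $\ell_1(y)\ge t$ has $\ell_2(y)>s$, giving $f_-(t)\ge s$.

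The main obstacle is that $f_-$ might take the value $\infty$ when no element has $\ell_1\ge t$, i.e.\ when $\ell_1$ is bounded. In that case we replace $f_-$ by $\min(f_-(t),t)$, which stays finite and non-decreasing and preserves the inequality. Unboundedness of this modified function is automatic: where $f_-$ is finite it tends to infinity as shown above, and where $f_-=\infty$ the minimum is $t$.

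Finally, $f_-(0)=0$ holds because units lie in the set $\{\ell_1\ge 0\}$ and have $\ell_2=0$; we also have $f_+(0)=0$ from the construction. This gives the ``moreover'' clause.
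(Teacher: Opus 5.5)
Your proposal is correct and follows essentially the same route as the paper: your $f_+(t)=g(t)+t$ is a cosmetic variant of the paper's $f_+(r)=\max\{r,g(r)\}$, and your truncated $f_-(t)=\min\{t,\inf\{\ell_2(x):\ell_1(x)\ge t\}\}$ is exactly the paper's $f_-$, with the same unboundedness argument through the suprema $h(s)$ of $\ell_1$ over sublevel sets of $\ell_2$. One small inaccuracy: $\inf\{s\ge 0: h(s)\ge t\}$ and $\inf\{\ell_2(x):\ell_1(x)\ge t\}$ need not coincide, since the supremum defining $h$ may not be attained, but you only use the second formula, so the argument is unaffected.
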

\begin{proof}
Assuming there are non-decreasing unbounded functions $f_{+},f_{-}:[0,\infty)\to[0,\infty)$
satisfying \eqref{eq:coarse-equivalence-length-control}, then for
any $r>0$, we have
\[
\sup\left\{ \ell_{2}(x):\ell_{1}(x)\leq r\right\} \leq\sup\left\{ f_{+}\left(\ell_{1}(x)\right):\ell_{1}(x)\leq r\right\} \leq f_{+}(r)<\infty,
\]
and 
\[
\sup\left\{ \ell_{1}(x):\ell_{2}(x)\leq r\right\} \leq\sup\left\{ \ell_{1}(x):f_{-}\left(\ell_{1}(x)\right)\leq r\right\} \leq\sup\left\{ s:f_{-}\left(s\right)\leq r\right\} <\infty,
\]
thanks to the unboundedness of $f_{-}$. 

On the other hand, assuming $\ell_{1},\ell_{2}$ are coarsely equivalent
as above, we may define 
\[
f_{+}(r)=\sup\left\{ r,\ell_{2}(x):\ell_{1}(x)\leq r\right\} \quad\mbox{and}\quad f_{-}(r)=\inf\left\{ r,\ell_{2}(x):\ell_{1}(x)\geq r\right\} 
\]
for $r\in[0,\infty)$. It is immediate that both functions are nondecreasing,
$f_{+}$ is unbounded, $f_{+}(0)=f_{-}(0)=0$, and \eqref{eq:coarse-equivalence-length-control}
is satisfied. To see $f_{-}$ is unbounded, we observe that for any
$r,s\geq0$, we have $f_{-}(r)<s$ if and only if either $r<s$ or
there is $x\in\CG$ such that $\ell_{2}(x)<s$ but $\ell_{1}(x)\geq r$,
the latter possibility implying $r\leq\sup\left\{ \ell_1(x):\ell_{2}(x)<s\right\} $.
This shows that $f_{-}^{-1}([0,s])$ is bounded for any $s\geq0$,
i.e., $f_{-}$ is unbounded. 
\end{proof}

\begin{rem}
\label{rem:coarse-equivalence-metric}Under the correspondence of
Lemma~\ref{lem:length-function-metric}, coarse equivalence of two
length functions $\ell_{1}\mbox{ and }\ell_{2}$ translates to coarse
equivalence of their induced extended metrics $\rho_{\ell_{1}}$ and
$\rho_{\ell_{2}}$, that is, we have 
\[
\sup\left\{ \rho{}_{\ell_{1}}(x,y):\rho_{\ell_{2}}(x,y)\leq r\right\} <\infty\quad\mbox{and}\quad\sup\left\{ \rho{}_{\ell_{2}}(x,y):\rho_{\ell_{1}}(x,y)\leq r\right\} <\infty,
\]
or equivalently, there are nondecreasing unbounded functions $f_{+},f_{-}:[0,\infty)\to[0,\infty)$
(sometimes referred to as \emph{control functions}) such that 
\[
f_{-}\left(\rho{}_{\ell_{1}}(x,y)\right)\leq\rho_{\ell_{2}}(x,y)\leq f_{+}\left(\rho{}_{\ell_{1}}(x,y)\right)\quad\mbox{for any }x,y\in\CG,
\]
where we adopt the convention that $f_{+}(\infty)=f_{-}(\infty)=\infty$. 
\end{rem}

\begin{lem}
\label{lem:coarse-length-unique} Any two coarse length functions
on an \'{e}tale groupoid $\CG$ are coarsely equivalent to each other. \end{lem}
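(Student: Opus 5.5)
By symmetry it suffices to fix coarse length functions $\ell_1,\ell_2$ on $\CG$ and $r>0$, and to show $\sup\{\ell_2(x):\ell_1(x)\leq r\}<\infty$. The plan is to pass from a bound on $\ell_1$ to precompactness using properness, and then from precompactness to a bound on $\ell_2$ using controlledness.

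Set $K=\{x\in\CG:\ell_1(x)\leq r\}$ and split it as $K=(K\cap\GU)\sqcup(K\setminus\GU)$.

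\emph{Units.} On $K\cap\GU$ both length functions vanish by condition (i) of Definition~\ref{def:length-function}. So these points contribute $0$ to the supremum, and no compactness of $\GU$ is needed here.

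\emph{Non-units.} The set $K':=K\setminus\GU$ is a subset of $\CG\setminus\GU$ with $\overline{\ell_1}(K')\leq r<\infty$. Since $\ell_1$ is proper, $K'$ is precompact in $\CG$. Since $\ell_2$ is controlled, precompactness of $K'$ gives $\overline{\ell_2}(K')<\infty$. Hence
\[
\sup\{\ell_2(x):\ell_1(x)\leq r\}\leq\max\bigl\{0,\overline{\ell_2}(K')\bigr\}<\infty .
\]
Exchanging the roles of $\ell_1$ and $\ell_2$ gives the other inequality in the definition of coarse equivalence.

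The only subtle point is that properness in Definition~\ref{def:coarse-length-function} only concerns subsets of $\CG\setminus\GU$. This is why units have to be treated separately, and condition (i) handles them for free. Nothing deeper, such as Lemma~\ref{lem:coarse-equivalence-length} or Lemma~\ref{lem:local-homeomorphism-cover}, should be needed.
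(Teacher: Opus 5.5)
Your proposal is correct and follows essentially the same route as the paper: properness of $\ell_1$ makes $\{x\in\CG\setminus\GU:\ell_1(x)\leq r\}$ precompact, and controlledness of $\ell_2$ bounds $\ell_2$ there. As in the paper, the units are handled separately by the fact that both length functions vanish on $\GU$, and symmetry gives the other inequality.
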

\begin{proof}
Given two coarse length functions $\ell_{1},\ell_{2}$ on $\CG$,
we see that for any $r>0$, by the properness of $\ell_{1}$, the
set $\left\{ g\in\CG\setminus\CG^{(0)}:\ell_{1}(g)\leq r\right\} $
is precompact, and thus by the facts that $\ell_{2}$ is controlled
and $\ell_{2}(\CG^{(0)})=\{0\}$, we have $\sup\left\{ \ell_{2}(g):\ell_{1}(g)\leq r\right\} <\infty$.
Similarly, we have $\sup\left\{ \ell_{1}(g):\ell_{2}(g)\leq r\right\} <\infty$,
as desired. 
\end{proof}

\begin{rem}
	\label{rem:coarse-length-bounded} A coarse length function on an \'{e}tale groupoid $\CG$ is bounded if and only if $\CG \setminus \CG^{(0)}$ is compact. This follows directly from Definition~\ref{def:coarse-length-function} and the fact that $\CG^{(0)}$ is open in $\CG$. 
\end{rem}

To prove the existence of coarse continuous length functions, we make
use of the following simple topological fact. We include the proof
for completeness. Recall a continuous function $g$ between two topological spaces is called \textit{proper} if $g^{-1}(K)$ is compact for any compact set $K$.
\begin{lem}
\label{lem:proper-function}Let $X$ be a $\sigma$-compact, locally
compact and Hausdorff space. Then there exists a continuous proper
map $g:X\to[0,\infty)$. \end{lem}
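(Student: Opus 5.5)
We build $g$ from a compact exhaustion of $X$ glued together by Urysohn functions. The goal is a continuous $g\colon X\to[0,\infty)$ with $g>n$ outside a prescribed compact set $K_{n}$. Then $g^{-1}([0,n])\subseteq K_{n}$ is a closed subset of a compact set, hence compact. Since every compact subset of $[0,\infty)$ lies in some $[0,n]$, this shows $g$ is proper.

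\emph{Step 1: an exhaustion.} Write $X=\bigcup_{n}C_{n}$ with each $C_{n}$ compact. We construct compact sets $K_{0}\subseteq K_{1}\subseteq\cdots$ with $C_{n}\cup K_{n-1}\subseteq\interior{K_{n}}$ for all $n\ge 1$, starting from $K_{0}=\varnothing$. The inductive step uses local compactness: every point of the compact set $C_{n}\cup K_{n-1}$ has an open neighborhood with compact closure, and finitely many of these cover it. We let $K_{n}$ be the union of their closures. Then $X=\bigcup_{n}\interior{K_{n}}$ and $K_{n}\subseteq\interior{K_{n+1}}$.

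\emph{Step 2: Urysohn functions.} A locally compact Hausdorff space satisfies Urysohn's lemma for a compact set inside an open set. So for each $n\ge0$ there is a continuous $f_{n}\colon X\to[0,1]$ with $f_{n}\equiv0$ on $K_{n}$ and $f_{n}\equiv1$ on $X\setminus\interior{K_{n+1}}$. For instance, take $f_{n}=1-h_{n}$, where $h_{n}$ is compactly supported in $\interior{K_{n+1}}$ and equals $1$ on $K_{n}$.

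\emph{Step 3: the function.} Set $g=\sum_{n\ge0}f_{n}$. On the open set $\interior{K_{m}}$ all $f_{n}$ with $n\ge m$ vanish, since there $\interior{K_{m}}\subseteq K_{n}$. So on $\interior{K_{m}}$ the sum is finite, and $g$ is continuous there. Because these open sets cover $X$, $g$ is continuous on $X$.

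\emph{Step 4: properness.} Take $x\notin K_{m}$. Then $x\notin K_{n+1}$ for every $n\le m-1$, so $f_{n}(x)=1$ for $n=0,\dots,m-1$. Hence $g(x)\ge m$, which gives $g^{-1}([0,m-1])\subseteq K_{m}$. Since $g$ is continuous, $g^{-1}([0,m-1])$ is closed in $X$. A closed subset of the compact set $K_{m}$ is compact, and $X$ is Hausdorff. It follows that $g^{-1}(C)$ is compact for every compact $C\subseteq[0,\infty)$.

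The only mildly delicate point is Step 1, where the exhaustion must satisfy $K_{n}\subseteq\interior{K_{n+1}}$; this is what gives the local finiteness used in Step 3. Everything else is routine.
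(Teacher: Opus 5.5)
Your proof is correct and follows the same strategy as the paper. Both use a compact exhaustion $K_0\subseteq K_1\subseteq\cdots$ with $K_i\subseteq\interior{K_{i+1}}$ and then build a function that increases by about one across each layer. The difference is only technical: the paper assigns the value $i$ on $\partial K_i$, fills in each annulus $K_{i+1}\setminus\interior{K_i}$ by the Tietze extension theorem, and pastes the pieces together, whereas you sum Urysohn functions $f_n$, which makes the continuity of the result (local finiteness on each $\interior{K_m}$) fully explicit.
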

\begin{proof}
Choose a sequence of compact subsets 
\[
K_{0}\subset K_{1}\subset\ldots\subset X
\]
with $K_{i}\subseteq\interior{K_{i+1}}$ for each $i$ and $X=\bigcup_{i=0}^{\infty}K_{i}$.
It follows that the closed sets $K_{0},\partial K_{1},\partial K_{2},\ldots$
are disjoint, allowing us to define $g(K_{0})=\{0\}$ and $g\left(\partial K_{i}\right)=\{i\}$
for $i=1,2,\ldots$. Applying the Tietze extension theorem to the
compact Hausdorff spaces $K_{i+1}\setminus\interior{K_{i}}$, for
$i=0,1,2,\ldots$, we obtain a continuous function $g:X\to[0,\infty)$
mapping $K_{i+1}\setminus\interior{K_{i}}$ into $[i,i+1]$, for $i=0,1,2,\ldots$,
which implies it is proper. \end{proof}
We remark that such a continuous proper function $g$ is bounded if and only if $X$ is bounded. 
\begin{thm}
\label{thm:coarse-length-functions}Up to coarse equivalence, any
$\sigma$-compact \'{e}tale groupoid has a unique
coarse continuous length function. \end{thm}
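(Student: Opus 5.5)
Uniqueness is already given by Lemma~\ref{lem:coarse-length-unique}, so we only need to construct one coarse continuous length function. The plan is to build it as a word-length with respect to a continuous ``generating weight''.

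\textbf{Step 1: a proper weight.} By Lemma~\ref{lem:proper-function} applied to $\CG$, which is $\sigma$-compact, locally compact and Hausdorff, choose a continuous proper $g\colon\CG\to[0,\infty)$. Define
\[
w(x)=1+g(x)+g(x^{-1})\quad\text{for } x\in\CG\setminus\GU, \qquad w=0 \text{ on } \GU .
\]
Since $\GU$ is clopen, $w$ is continuous. It is symmetric, and for every $t$ the set $\{x\notin\GU : w(x)\le t\}$ is compact.

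\textbf{Step 2: the word length.} Set
\[
\ell(x)=\inf\Big\{\sum_{i=1}^n w(x_i) : (x_1,\dots,x_n)\in\CG^{(n)},\ x_1\cdots x_n=x\Big\}.
\]
The one-letter decomposition shows $\ell\le w$, hence $\ell$ is finite. Symmetry follows by reversing words and inverting letters. Subadditivity follows by concatenating words. If $x\notin\GU$, some letter lies outside $\GU$, so $\ell(x)\ge 1$; thus $\ell(x)=0$ exactly on $\GU$, and $\ell$ is a length function.

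\textbf{Step 3: controlled and proper.}
\begin{itemize}
\item Controlled: this follows from $\ell\le w$ and the boundedness of the continuous function $w$ on compact sets.
\item Proper: suppose $x\notin\GU$ and $\ell(x)\le r$. Any near-optimal word may drop unit letters, so it has at most $r+1$ nonunit letters, each lying in the compact set $C_r=\{w\le r+1\}\setminus\GU$. Hence $x\in\bigcup_{n\le r+1}(C_r)^n$. This set is compact, being the image under the continuous multiplication of the compact set $\CG^{(n)}\cap C_r^n$ ($\CG^{(n)}$ is closed because $\GU$ is Hausdorff). So $\{\ell\le r\}\setminus\GU$ is precompact.
\end{itemize}

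\textbf{Step 4: continuity, the main obstacle.} Upper semicontinuity of $\ell$ at $x$ is the easy half. Take a near-optimal word $x_1\cdots x_n=x$. By Corollary~\ref{cor:multiplication-is-local-homeo}, $\delta^{(n)}$ is a local homeomorphism, so every $y$ near $x$ is a product $y_1\cdots y_n$ with each $y_i$ near $x_i$. Continuity of $w$ then gives $\ell(y)\le\ell(x)+\eps$.

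Lower semicontinuity is the hard half. Suppose $y_k\to x$ with $\ell(y_k)\le\ell(x)-\eps$. By the compactness of Step 3, the near-optimal words for $y_k$ have bounded length $n$ and letters in a compact set. Passing to a subnet, the letters converge, and by continuity of multiplication and closedness of $\CG^{(n)}$ the limit word is a word for $x$ of weight at most $\ell(x)-\eps$, a contradiction.

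One subtlety is that the unit letters were discarded; this is harmless because they contribute zero weight. Another is that the $y_k$ may lie in $\GU$ while $x$ does not, but this cannot happen because $\GU$ is open.

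Since nets are used, no metrizability is required. Hence $\ell$ is continuous, which together with Steps 2 and 3 gives the existence of a coarse continuous length function. Combined with Lemma~\ref{lem:coarse-length-unique}, this proves the theorem.
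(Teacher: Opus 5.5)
Your proof is correct, and its architecture matches the paper's. You build a symmetric continuous weight that vanishes exactly on $\GU$ and is at least $1$ and proper off $\GU$, then take the induced word length. Properness comes from discarding unit letters and landing in a finite union of compact product sets, essentially as in the paper's use of \eqref{eq:length-formula-bounded}. Applying Lemma~\ref{lem:proper-function} to $\CG$ rather than to $\CG\setminus\GU$, and allowing unit letters in words, are cosmetic differences.

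The genuine divergence is in the continuity step. The paper proves continuity in one stroke. It applies Lemma~\ref{lem:local-homeomorphism-cover} to $\mathring{\delta}^{(j)}$ and the compact set $(f^{-1}([1,N]))^{j}$, which gives finitely many continuous local inverses $\eta^{(j)}_i$ near $x$. Hence $\ell$ is locally a minimum of finitely many continuous functions, as in \eqref{eq:length-formula-finite-min}.

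You instead prove the two semicontinuities separately. Upper semicontinuity comes from openness of $\delta^{(n)}$ and continuity of $w$. Lower semicontinuity comes from a soft compactness argument: boundedly many letters, all in a fixed compact set, a convergent subnet of constant word length, closedness of $\CG^{(n)}$, and uniqueness of limits in $\CG$.

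Comparing the two routes: yours avoids the covering lemma entirely, and its lower-semicontinuity half never uses that multiplication is a local homeomorphism. It needs only compactness of the sublevel sets of $w$ and continuity of multiplication. The paper's route yields an explicit local formula for $\ell$, which it reuses in Remark~\ref{rem:coarse-length-functions-ample}. Your argument recovers that remark too: with an integer-valued locally constant weight, $\ell$ is integer-valued and continuous, hence locally constant.

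One small slip: to ensure the $y_k$ eventually lie outside $\GU$ when $x\notin\GU$, you need $\GU$ to be \emph{closed}, not open. This is harmless, since $\GU$ is clopen. Also, in the lower-semicontinuity step, take the words for $y_k$ within $\eps/2$ of optimal, so the limit word for $x$ has weight at most $\ell(x)-\eps/2$ and you still reach a contradiction.
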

\begin{proof}
Uniqueness up to coarse equivalence follows from Lemma~\ref{lem:coarse-length-unique}.
It remains to show existence. To get started, we choose a continuous
function $f:\CG\to\{0\}\cup[1,\infty)$ such that $f^{-1}(\{0\})=\CG^{(0)}$,
$f(x)=f\left(x^{-1}\right)$ for any $x\in\CG$, and $f\mid_{\CG\setminus\CG^{(0)}}$
is proper, i.e., for any $r\geq1$, the inverse image $f^{-1}\left([1,r]\right)$
is compact. Indeed, to construct $f$, we first observe that since
$\CG^{(0)}$ is a clopen subset of $\CG$, the complement $\CG\setminus\CG^{(0)}$
is also $\sigma$-compact, locally compact and Hausdorff, which enables
us to apply Lemma~\ref{lem:proper-function} to obtain a proper continuous
function $g:\CG\setminus\CG^{(0)}\to[0,\infty)$ and then define 
\[
f(x)=\begin{cases}
0, & x\in\CG^{(0)}\\
1+\frac{g(x)+g\left(x^{-1}\right)}{2}, & x\in\CG\setminus\CG^{(0)}
\end{cases},
\]
which clearly satisfies all the requirements. 

We define a function $\ell:\CG\rightarrow[0,\infty)$ by 
\[
\ell(x)=\inf\left\{ \sum_{j=1}^{k}f\left(y_{j}\right):k\in\N\mbox{ and }y_{1},\ldots,y_{k}\in\CG\setminus\CG^{(0)}\mbox{ such that }x=y_{1}\dots y_{k}s(x)\right\} 
\]
for $x\in\CG$, where the degenerate case of $k=0$ corresponds to
$x=s(x)$ and $\ell(x)=0$. It is immediate that the function $\ell$
defined above is a length function on $\CG$. 

To study $\ell$, we describe an equivalent definition of it in terms
of the sets $\CG^{(n)}$ of composable $n$-tuples and the $n$-ary
multiplication maps $\delta^{(n)}$. For $n=1,2,\ldots$, define 
\[
\mathring{\CG}^{(n)}=\CG^{(n)}\cap\left(\CG\setminus\CG^{(0)}\right)^{n}=\left\{ (x_{1},\ldots,x_{n})\in\CG^{(n)}:x_{1},\ldots,x_{n}\in\CG\setminus\CG^{(0)}\right\} ,
\]
\[
\mathring{\delta}^{(n)}=\delta^{(n)}\mid_{\mathring{\CG}^{(n)}}:\mathring{\CG}^{(n)}\to\CG,\quad(x_{1},\ldots,x_{n})\mapsto x_{1}\cdots x_{n},
\]
\[
f^{(n)}:\CG^{(n)}\to[0,\infty),\quad(x_{1},\ldots,x_{n})\mapsto\sum_{j=1}^{k}f\left(x_{j}\right),
\]
\[
\mathring{f}^{(n)}=f^{(n)}\mid_{\mathring{\CG}^{(n)}}:\mathring{\CG}^{(n)}\to[0,\infty).
\]
Combining these definitions, we have
\[
\ell(x)=\inf\left(\left\{ f(x)\right\} \cup\bigcup_{j=2}^{\infty}\mathring{f}^{(j)}\left(\left(\mathring{\delta}^{(j)}\right)^{-1}(\{x\})\right)\right)\quad\mbox{for any }x\in\CG.
\]
Moreover, observe that for $n=1,2,\ldots$, the range of $\mathring{f}^{(n)}$
is contained in $[n,\infty)$ and, for any $r\geq1$, we have 
\[
\left(\mathring{f}^{(n)}\right)^{-1}([1,r])\subseteq\left(f^{-1}([1,r])\right)^{n}.
\]
Hence for any $x\in\CG$ and $N\in\N$ satisfying $\ell(x)\leq N$,
we may remove large values that do not affect the infimum and obtain
\begin{equation}
\ell(x)=\inf\left(\left\{ f(x)\right\} \cup\bigcup_{j=2}^{N}\mathring{f}^{(j)}\left(\left(f^{-1}([1,N])\right)^{j}\cap\left(\mathring{\delta}^{(j)}\right)^{-1}(\{x\})\right)\right).\label{eq:length-formula-bounded}
\end{equation}

To show that $\ell$ is proper, let $K\subset\CG\setminus\CG^{(0)}$
and suppose $\overline{\ell}(K)<N$ for some $N\in\N$. Then for any
$x\in K$, since $f(x)\geq1$, it follows from \eqref{eq:length-formula-bounded}
that there is $n\in\{1,\ldots,N\}$ such that the set 
\[
\left(f^{-1}([1,N])\right)^{n}\cap\left(\mathring{\delta}^{(n)}\right)^{-1}(\{x\})
\]
is non-empty. Therefore we have
\[
K\subseteq\bigcup_{j=1}^{N}\underbrace{f^{-1}([1,N])\cdots f^{-1}([1,N])}_{j},
\]
the latter set being a finite union of products of compact sets, and
thus compact. This shows that $\ell$ defined above is proper.

To show that $\ell$ is continuous, it suffices to prove that for
any $x\in\CG$, there is an open neighborhood $U$ of $x$ such that
$\ell$ is continuous when restricted to $U$. To this end, we let
$N=\ceilstar{\ell(x)+1}$. Thus for any $y$ in the open neighborhood
$f^{-1}([0,N))$ of $x$, the formula \eqref{eq:length-formula-bounded}
applies with $y$ in place of $x$. Observe that for $j=1,2,\ldots N$,
by Corollary~\ref{cor:multiplication-is-local-homeo} and the fact
that $\mathring{\CG}^{(j)}$ is a clopen subset of $\CG^{(j)}$, we
know $\mathring{\delta}^{(j)}$ is a local homeomorphism. Applying
Lemma~\ref{lem:local-homeomorphism-cover} with $\mathring{\delta}^{(j)}$,
$x$, and $\left(f^{-1}([1,N])\right)^{j}$ in place of $f$, $y$,
and $K$, we may find an open neighborhood $U^{(j)}$ of $x$ inside
$f^{-1}([0,N))$ and a finite family of open subsets $V_{1}^{(j)},\ldots,V_{m_{j}}^{(j)}$
in $\mathring{\CG}^{(j)}$ such that $\mathring{\delta}^{(j)}$ restricts
to a homeomorphism between $V_{i}^{(j)}$ and $U^{(j)}$, for any
$i\in\left\{ 1,\ldots,m_{j}\right\} $, and we have 
\[
\left(f^{-1}([1,N])\right)^{j}\cap\left(\mathring{\delta}^{(j)}\right)^{-1}\left(U^{(j)}\right)\subseteq V_{1}^{(j)}\cup\ldots\cup V_{m_{j}}^{(j)}.
\]
Writing $\eta_{i}^{(j)}:U^{(j)}\to V_{i}^{(j)}$ for the inverse of
$\mathring{\delta}^{(j)}\mid_{V_{i}^{(j)}}$, for $i=1,\ldots,m_{j}$.
Then for any $y\in U^{(j)}$, we have 
\[
\left(f^{-1}([1,N])\right)^{j}\cap\left(\mathring{\delta}^{(j)}\right)^{-1}\left(\{y\}\right)=\left(f^{-1}([1,N])\right)^{j}\cap\left\{ \eta_{i}^{(j)}(y):i=1,\ldots,m_{j}\right\} .
\]
Let $U=U_{1}\cap\ldots\cap U_{N}$. Then, for any $y\in U$, we may
rewrite \eqref{eq:length-formula-bounded} as
\begin{equation}
\ell(y)=\min\left\{ f(y),\left(\mathring{f}^{(j)}\circ\eta_{i}^{(j)}\right)(y):j=1,\ldots,N,\ i=1,\ldots,m_{j}\right\} .\label{eq:length-formula-finite-min}
\end{equation}
Hence on the open neighborhood $U$ of $x$, $\ell$ is equal to the
minimum of finite number of continuous functions, and is thus itself
continuous, as desired. 

The controlledness of $\ell$ follows directly from its continuity.
Therefore $\ell$ is a coarse continuous length function. \end{proof}
\begin{rem}\label{4.10}
\label{rem:coarse-length-functions-ample}If a $\sigma$-compact \'{e}tale groupoid $\CG$ is also ample, then we can
choose a coarse length function $\ell$ that is locally constant.
Indeed, when carrying out the proof of Theorem~\ref{thm:coarse-length-functions},
we observe that we can choose the function $g$ and thus also the
function $f$ to be locally constant, by choosing an increasing sequence
of \emph{open }compact subsets $K_{0}\subset K_{1}\subset\ldots\subset\CG\setminus\CG^{(0)}$
with $\CG\setminus\CG^{(0)}=\bigcup_{i=0}^{\infty}K_{i}$ and then
defining $g(x)=\min\left\{ i:x\in K_{i}\right\} $ for all $x\in\CG\setminus\CG^{(0)}$.
It then follows from \eqref{eq:length-formula-finite-min} that $\ell$
is locally equal to the minimum of a finite collection of locally
finite functions, and thus is itself locally constant. 
\end{rem}

\begin{defn}
\label{def:coarse-metric} The unique-up-to-coarse-equivalence
invariant fiberwise extended metric induced by a coarse continuous
length function on a $\sigma$-compact \'{e}tale groupoid $\CG$ will be called a \emph{canonical} extended
metric. We will abuse notation and denote any such metric by $\rho$
or $\rho_{\CG}$. \end{defn}
\begin{example}
\label{exa:transformation-groupoid-length}Let $X$ be a $\sigma$-compact
locally compact Hausdorff space and $\Gamma$ be a countable group
that acts on $X$ by homeomorphisms. Then the transformation groupoid
$X\rtimes\Gamma$ (c.f., Example~\ref{exa:transformation-groupoid})
is also $\sigma$-compact. To construct a coarse continuous length
function on $X\rtimes\Gamma$, we may fix a proper length function
$\ell_{\Gamma}$ on $\Gamma$ and a continuous proper function $g:X\to[0,\infty)$,
and then define 
\[
\ell_{X\rtimes\Gamma}:X\rtimes\Gamma\to[0,\infty),\quad(\gamma x,\gamma,x)\mapsto\ell_{\Gamma}(\gamma)\left(1+\max\left\{ g(\gamma x),g(x)\right\} \right).
\]
Note that when $X$ is compact, then we may simply choose $g=0$ and
thus 
\[
\ell_{X\rtimes\Gamma}(\gamma x,\gamma,x)=\ell_{\Gamma}(\gamma)
\]
for any $(\gamma x,\gamma,x)\in X\rtimes\Gamma$. \end{example}
\begin{lem}
\label{lem:groupoid-uniformly-loc-finite}Any canonical extended metric
$\rho$ on a $\sigma$-compact \'{e}tale groupoid
$\CG$ is uniformly locally finite. \end{lem}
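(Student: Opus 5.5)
By Lemma~\ref{lem:invariant-fiberwise-extended-metric}, closed balls have the form $\bar{B}(\{y\},R)=\bar{B}(\GU,R)\cdot\{y\}$. Write $\ell$ for the coarse continuous length function inducing $\rho$. Then
\[
\bar{B}(\{y\},R)=\{zy : z\in\CG,\ s(z)=r(y),\ \ell(z)\le R\}.
\]
Right multiplication by $y$ is injective on $\CG_{r(y)}$, so
\[
|\bar{B}(\{y\},R)| = |K_R\cap\CG_{r(y)}|, \qquad\text{where } K_R:=\{z\in\CG:\ell(z)\le R\}=\bar{B}(\GU,R).
\]
It therefore suffices to show that $\sup_{u\in\GU}|K_R\cap\CG_u|<\infty$ for each $R>0$.

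The key step is to split $K_R=\GU\sqcup(K_R\setminus\GU)$. The part $\GU$ contributes exactly one point, namely $u$, to each source fiber $\CG_u$. For the other part, properness of $\ell$ shows that $L_R:=K_R\setminus\GU$ is precompact in $\CG$. Continuity of $\ell$ makes $K_R$ closed, and $\GU$ is clopen, so $L_R$ is in fact compact.

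A compact subset of an \'etale groupoid meets every source fiber in a uniformly bounded number of points. To see this, cover $L_R$ by finitely many open bisections $U_1,\ldots,U_n$. Since $s$ is injective on each $U_i$, every fiber $\CG_u$ meets each $U_i$ in at most one point. Hence $|L_R\cap\CG_u|\le n$ for all $u\in\GU$.

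Combining the two parts gives $\sup_{y\in\CG}|\bar{B}(\{y\},R)|\le n+1<\infty$, which is uniform local finiteness. Because any two canonical metrics are coarsely equivalent, the argument could also be transported between them. This is unnecessary, however, since the argument above applies verbatim to any coarse continuous length function.

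The only point needing care is that properness is stated only for subsets of $\CG\setminus\GU$. This is why the unit space is split off first and bounded separately by one point per fiber. Otherwise the argument is routine.
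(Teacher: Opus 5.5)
Your proof is correct and follows essentially the same route as the paper. Both arguments split off the unit space, use properness of $\ell$ to make $L_R=\{z\in\CG\setminus\GU:\ell(z)\le R\}$ precompact, cover it by finitely many open bisections, and conclude $|\bar{B}(x,R)|\le n+1$ from $\bar{B}(x,R)\subseteq\{x\}\cup L_R x$. The only difference is cosmetic: you use continuity of $\ell$ to make $L_R$ compact, whereas the paper simply covers the compact closure of $L_R$, so properness alone suffices.
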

\begin{proof}
Let $\ell$ be the coarse length function that induces $\rho$ by
Lemma~\ref{lem:length-function-metric}. Fix $R>0$. Since $\ell$
is proper, the set $L_{R}=\left\{ z\in\CG\setminus\CG^{(0)}:\ell(z)\leq R\right\} $
is precompact. Then there is a finite family $\{V_{1},\dots,V_{m}\}$
of precompact open bisections such that 
\[
L_{R}\subset\bigcup_{i=1}^{m}V_{i}.
\]
Then, for every $x\in\CG$, one has 
\[
\bar{B}(x,R)=\{y\in G:\rho(y,x)\leq R\}=\{y\in\CG:\ell(yx^{-1})\leq R\}\subset\{x\}\cup L_{R}x\subset\{x\}\cup\bigcup_{i=1}^{m}V_{i}x.
\]
This implies that $|\bar{B}(x,R)|\leq m+1$. Since $R$ was arbitrarily
chosen and $m$ only depends on $R$, thus $\rho$ is uniformly locally
finite.\end{proof}
\begin{rem}
\label{rem:abstract-coarse-structure-groupoid}For readers familiar
with abstract coarse spaces in terms of entourages (c.f., \cite[Chapter 2]{Roe2003Lectures}),
we point out that the coarse structure on $\CG$ (as a set) determined
by any canonical extended metric can be directly defined as follows:
a subset $E$ of $\CG\times\CG$ is an entourage if and only if there
is a precompact subset $K$ of $\CG$ such that for any $(x,y)\in E$,
we have either $x=y$ or $x\in Ky$. This construction is different
from, but related to, the notion of coarse structures on a groupoid
(c.f., \cite{HigsonPedersenRoe1997C,TangWillettYao2018Roe}), in that
our entourages are subsets of $\CG\times\CG$ with $\CG$ embedded
as the diagonal, instead of subsets of $\CG$ with $\CG^{(0)}$ playing
the role of a diagonal, but on the other hand, our coarse structure
can be viewed as induced from the smallest coarse structure on the
groupoid $\CG$ (generated by the relatively compact subsets and $\CG^{(0)}$)
via the canonical translation action of $\CG$ on itself. 

A lot of the contents in this paper may be handled with this abstract
coarse structure, which would have the advantage of circumventing
the somewhat inconvenient fact that the canonical extended metric
is only unique up to coarse equivalence. The definition and some basic
properties even extend beyond the case of $\sigma$-compact \'{e}tale groupoids. However, we opt to stick to the language of metrics
since it is more intuitive while $\sigma$-compact \'{e}tale groupoids
are prevalent in the main applications we have in mind (indeed, it
is necessary to ensure that $C_{r}^{*}(\CG)$ is separable). 
\end{rem}

\section{Fiberwise amenability \label{sec:fiberwise-amenability}}
In this section, we will introduce  fiberwise amenability
and ubiquitous fiberwise amenability for \'{e}tale groupoids, with inspirations
from (ubiquitous) metric amenability (c.f., Definition~\ref{def:metric-amenability}
and~\ref{def:uniform-metric-amenability}). Fiberwise amenability
is closely related to the existence of invariant measures on the unit
space of an \'{e}tale groupoid. As a motivating example, a transformation
groupoid is (ubiquitously) fiberwise amenable if and only if the acting
group is amenable. Ubiquitous fiberwise amenability will play an important
auxiliary role when we discuss groupoid strict comparison and almost
elementariness for minimal groupoids in our sequel paper \cite{MWAE}. For this
purpose, we show in \Cref{subsec:minimal} that for minimal
groupoids, fiberwise amenability is also equivalent to the a priori
stronger notion of ubiquitous fiberwise amenability. 

We first define boundary sets in groupoids in analogy with Definition~\ref{def:metric-boundaries}. 
\begin{defn}\label{def:groupoid-boundaries}
Let $\CG$ be a groupoid. For any subsets $A,K\subseteq\CG$, we define
the following boundary sets:
\begin{enumerate}[label=(\roman*)]
\item \emph{left outer $K$-boundary}: $\partial_{K}^{+}A=(KA)\setminus A=\{yx\in\CG\setminus A:y\in K,x\in A\}$;
\item \emph{left inner $K$-boundary}: $\partial_{K}^{-}A=A\cap(K^{-1}(\CG\setminus A))=\{x\in A:yx\in\CG\setminus A\mbox{ for some }y\in K\}$;
\item \emph{left $K$-boundary}: $\partial_{K}A=\partial_{K}^{+}A\cup\partial_{K}^{-}A$. 
\end{enumerate}
\end{defn}
Observe that if $A$ as above is contained in a single source fiber,
then $KA$ and all these boundary sets are also contained in this
source fiber. This is the reason for the terminology \emph{fiberwise
amenability}. 
\begin{rem}\label{rem: double control for boundries}
\label{3.1-1} For any subsets $A,K\subseteq\CG$, it is straightforward
to see $\partial_{K}^{+}A\subset K\partial_{K}^{-}A$ and $\partial_{K}^{-}A\subset K^{-1}\partial_{K}^{+}A$. 
\end{rem}
The following concept is analogous to the metric case, too. 
\begin{defn}
\label{def:groupoid-Folner} Let $\CG$ be an \'{e}tale
groupoid. For any subset $K\subseteq\CG$ and $\varepsilon>0$, a finite
non-empty set $F\subset \CG$ is called $(K,\varepsilon)$-F{\o}lner if
it satisfies 
\[
\frac{|\partial_{K}F|}{|F|}\leq\varepsilon.
\]
We denote by $\opFol(K,\varepsilon)$ the collection of all $(K,\varepsilon)$-F{\o}lner
sets. 
\end{defn}
This leads to a natural definition of fiberwise amenability. 
\begin{defn}
\label{def:fiberwise-amenable}\label{4.1} Let $\CG$ be a locally
compact \'{e}tale groupoid. 
\begin{enumerate}
\item\label{def:fiberwise-amenable:original} We say $\CG$ is \textit{fiberwise amenable} if for any compact subset
$K$ of $\CG$ and any $\varepsilon>0$, there exists a $(K,\varepsilon)$-F{\o}lner
set.
\item\label{def:fiberwise-amenable:ubiquitous} We say $\CG$ is \textit{ubiquitously fiberwise amenable} if and only
if for any compact subset $K$ of $\CG$ and any $\varepsilon>0$, there
exists a compact subset $L$ of $\CG$ such that for any unit $u\in\CG^{(0)}$,
there is a $(K,\varepsilon)$-F{\o}lner set in $Lu\cup\{u\}$. 
\end{enumerate}
\end{defn}
Since the groupoids we focus on are $\sigma$-compact and come equipped
with extended metric structure in a somewhat canonical way (c.f.,
Definition~\ref{def:coarse-metric}), we may also reformulate Definition~\ref{def:fiberwise-amenable}
using the canonical extended metric. This will establish a connection
with Section~\ref{sec:metric-amenability} and enable us to apply
the results there. 
\begin{prop}
\label{prop:fiberwise-amenability-metric} Let $\CG$ be a $\sigma$-compact
\'{e}tale groupoid and let $(\CG,\rho)$ be
the extended metric space induced by a coarse length function $\ell$. 
\begin{enumerate}
\item
\label{prop:fiberwise-amenability-metric::fiberwise-amenable} The groupoid $\CG$ is fiberwise amenable if and only if for any compact
subset $K$ of $\CG$ and any $\varepsilon>0$, there exists a nonempty
finite subset $F$ in $\CG$ satisfying 
\[
\frac{|KF|}{|F|}\leq1+\varepsilon,
\]
if and only if $(\CG,\rho)$ is amenable in the sense of Definition
\ref{3.2}.. 
\item
\label{prop:fiberwise-amenability-metric::ubiquitously-fiberwise-amenable} The groupoid $\CG$ is ubiquitously fiberwise amenable if and only
if for any compact subset $K$ of $\CG$ and any $\varepsilon>0$, there
exists a compact subset $L$ of $\CG$ such that for any unit $u\in\CG^{(0)}$,
there is a nonempty finite subset $F$ in $Lu\cup\{u\}$ satisfying
\[
\frac{|KF|}{|F|}\leq1+\varepsilon,
\]
if and only if $(\CG,\rho)$ is ubiquitously amenable in the sense
of Definition \ref{3.0}. 
\end{enumerate}
\end{prop}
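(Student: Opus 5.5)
The plan is to translate between compact subsets of $\CG$ and metric balls via the coarse length function $\ell$, and then invoke Proposition~\ref{prop:metric-amenable-change-boundaries}. The basic dictionary comes from Lemma~\ref{lem:invariant-fiberwise-extended-metric} and Lemma~\ref{lem:length-function-metric}: for any $M\subseteq\CG$ and $R\ge0$ we have $\bar{B}(M,R)=\bar{B}(\GU,R)\cdot M$, where $\bar{B}(\GU,R)=\{z:\ell(z)\le R\}$. Two facts make this usable. First, since $\ell$ is controlled, every compact $K$ lies in $\bar{B}(\GU,R)$ for $R=\overline{\ell}(K)$. Second, since $\ell$ is proper, $\bar{B}(\GU,R)\subseteq \GU\cup \overline{L_R}$, where $L_R=\{z\in\CG\setminus\GU:\ell(z)\le R\}$ is precompact; so $\bar{B}(\GU,R)$ is contained in the compact set $K_R:=\overline{L_R}\cup C$ for any compact $C\subseteq\GU$ large enough to contain the sources and ranges we care about. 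The unit space causes a nuisance: $\GU M=M$ always holds, so I will replace $K$ by $K\cup s(K)\cup r(K)$ where convenient. This only enlarges $KF$, and for the reverse direction we need $\bar{B}(F,R)\subseteq K_R F$, which holds once $\GU\cdot F=F$ is accounted for.

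\textbf{Part (1).} I first show that fiberwise amenability is equivalent to the $|KF|\le(1+\eps)|F|$ condition. We have $\partial^+_K F=KF\setminus F$, so $|KF|\le |F|+|\partial_K F|$, which gives one direction. For the other, assume $|K'F|\le(1+\eps')|F|$ with $K'=K\cup K^{-1}\cup s(K)$. Then $F\subseteq K'F$, so $|\partial^+_K F|\le|K'F\setminus F|\le\eps'|F|$. By Remark~\ref{rem: double control for boundries}, $\partial^-_K F\subseteq K^{-1}\partial^+_K F$. Since $K^{-1}$ is compact, it is covered by finitely many, say $m$, open bisections, so $|K^{-1}A|\le m|A|$. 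Hence $|\partial_K F|\le(1+m)\eps'|F|$, and choosing $\eps'$ small finishes this equivalence.

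Next, the equivalence with metric amenability uses criterion \ref{prop:metric-amenable-change-boundaries:A-nbhd}; the metric space is uniformly locally finite by Lemma~\ref{lem:groupoid-uniformly-loc-finite}. Given $R$, set $K=K_R\cup s(K_R)$. This is compact since $\overline{L_R}$ is compact. We get $\bar{B}(F,R)\subseteq KF\cup F=KF$, so the groupoid condition gives the metric one. Conversely, given a compact $K$, put $R=\overline{\ell}(K)$; then $KF\subseteq\bar{B}(F,R)$, so the metric condition gives the groupoid one.

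\textbf{Part (2).} The same dictionary works, with one extra check: localization. The groupoid condition asks for $F\subseteq Lu\cup\{u\}$, while the metric condition asks for $F\subseteq\bar{B}(x,S)$ for every $x\in\CG$. From groupoid to metric, take $L$ and set $S=\overline{\ell}(L)$, which is finite since $L$ is compact and $\ell$ is controlled. Then $Lu\cup\{u\}\subseteq\bar{B}(u,S)$. For a general point $x$, apply the hypothesis at $u=r(x)$ to get $F\subseteq\bar{B}(r(x),S)$. Then right-translate: $Fx$ has the same cardinalities, because right multiplication by $x$ is a bijection from $\CG_{r(x)}$ onto $\CG_{s(x)}$ and is an isometry by right-invariance. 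Moreover $K(Fx)=(KF)x$, and $Fx\subseteq\bar{B}(x,S)$, as required. From metric to groupoid, given $S$, apply the metric condition at $x=u$. This gives $F\subseteq\bar{B}(u,S)=\bar{B}(\GU,S)u\subseteq\overline{L_S}\,u\cup\{u\}$, so $L=\overline{L_S}$ works. Note that the metric definition quantifies over all $x$, which includes all units. The passage between Følner sets and $|KF|$ sets is handled by Proposition~\ref{prop:metric-amenable-change-boundaries} (U1)--(U4), or by the same argument as in part (1), with $L$ unchanged.

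\textbf{Expected obstacle.} The main difficulty is bookkeeping rather than any deep step. We must make sure that $\bar{B}(\GU,R)$ really sits inside a compact set. This relies on properness only off $\GU$; the unit part acts trivially on $F$, so no compactness of $\GU$ is needed. We also need the bound $|K^{-1}A|\le m|A|$, which follows from covering the compact set by finitely many bisections.
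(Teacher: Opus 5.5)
Your overall strategy matches the paper's: the inclusion $\partial^-_KF\subseteq K^{-1}\partial^+_KF$, the multiplicity bound $|K^{-1}A|\le m|A|$, the identity $\bar B(M,r)=\bar B(\GU,r)M$, controlledness, and Proposition~\ref{prop:metric-amenable-change-boundaries}. However, your handling of the units contains a false step, and it breaks both implications that start from the $|KF|$ condition. You assert $F\subseteq K'F$ for $K'=K\cup K^{-1}\cup s(K)$, and $\bar B(F,R)\subseteq KF\cup F=KF$ for $K=K_R\cup s(K_R)$. Each of these needs $r(F)$ to lie in the unit part of $K'$ (resp.\ $K$), and you cannot arrange that. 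The set $F$ is produced by the hypothesis only \emph{after} $K'$ is fixed, and a unit outside $K'$ simply does not act on the points of $F$ above it. So $\GU F=F$ is true but beside the point, and choosing ``$C$ large enough to contain the sources and ranges we care about'' is circular.

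Concretely, take the discrete groupoid $\CG=\mathbb{Z}\sqcup\{v_1,v_2\}$ (the group $\mathbb{Z}$ plus two isolated units), $K=\{1\}$, so $K'=\{-1,0,1\}$, and $F=\{0,1,v_1,v_2\}$. Then $K'F=\{-1,0,1,2\}$, so $|K'F|=|F|$ and $F$ satisfies the hypothesis for every $\varepsilon'\ge 0$. Yet $\partial_KF=\{1,2\}$, so $|\partial_KF|=|F|/2$. The hypothesis only guarantees \emph{some} such $F$, so it may return exactly this one. As written, you have therefore only shown that fiberwise amenability and metric amenability each imply the $|KF|$ condition, not the converses.

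The repair is a short dichotomy. If some $x\in F$ has $r(x)\notin s(K)$, then $Kx=\varnothing$, so $\partial_K\{x\}=\varnothing$ and $\{x\}$ is already a $(K,0)$-F{\o}lner set. Since $\{x\}\subseteq F\subseteq Lu\cup\{u\}$, the localization needed in (2) is preserved. Otherwise $r(F)\subseteq s(K)$, so $F=s(K)F\subseteq K'F$, and your estimate $|\partial^+_KF|\le|K'F\setminus F|\le\varepsilon'|F|$ is valid.

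The metric step is handled the same way. If some $x\in F$ has $r(x)\notin s(\overline{L_R})$, then $\bar B(x,R)=\{x\}\cup L_Rx=\{x\}$, which is an $(R,0)$-F{\o}lner set. Otherwise $F=s(\overline{L_R})F\subseteq KF$, hence $\bar B(F,R)=F\cup L_RF\subseteq KF$.

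The paper's sketch sidesteps the issue by enlarging $K$ to $\bar B(\GU,r)$, which contains every unit and so forces $F\subseteq KF$. That set is compact only when $\GU$ is. For non-compact $\GU$, part (1) is trivial by Remark~\ref{rem:fiberwise-amenability-noncompact}, but part (2) is not, so the dichotomy is the clean uniform fix. With this patch, the rest of your argument is correct: the right-translation from units to arbitrary base points and the choices $S=\overline{\ell}(L)$ and $L=\overline{L_S}$.
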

\begin{proof}
We prove the second statement, the first being similar. To prove the
three conditions here are equivalent, we first observe that the equivalence
of the first two follows from Remark~\ref{3.1-1} and the fact that if $K$ is compact in $\CG$ then there is an $m>0$ such that $|K^{-1}F|\leq m|F|$ for any finte set $F$ in $\CG$ (note that this does not require $\sigma$-compactness). The equivalence between the first two conditions and the ubiquitous amenable of $(\CG,\rho)$ follows directly from Proposition~\ref{prop:metric-amenable-change-boundaries},  Lemma~\ref{lem:invariant-fiberwise-extended-metric}, and the observation that since continuous length functions are controlled (see Definition~\ref{def:coarse-length-function}), thus by enlarging the set $K$ if necessary, we may assume without loss of generality that $K = \bar{B}\left(\GU,r\right)$ for some $r \geq 0$. 
\end{proof}
\begin{rem}
\label{4.2}\label{rem:transformation-groupoid-fiberwise-amenable}
Let $\alpha:\Gamma\curvearrowright X$ be an action of a countable
discrete group $\Gamma$ on a compact space $X$. We denote by $X\rtimes_{\alpha}\Gamma$
the transformation groupoid of this action $\alpha$. When we equip
$\Gamma$ with a proper length function $\ell_{\Gamma}$ and $X\rtimes_{\alpha}\Gamma$
with the induced length function $\ell_{X\rtimes_{\alpha}\Gamma}:(\gamma x,\gamma,x)\mapsto\ell_{\Gamma}(\gamma)$
(c.f., Example~\ref{exa:transformation-groupoid-length}), each source
fiber $(X\rtimes_{\alpha}\Gamma)_{x}=\{(\gamma x,\gamma,x):\gamma\in\Gamma\}$,
for $x\in X$, becomes isometric to $\Gamma$. Therefore $X\rtimes_{\alpha}\Gamma$
is fiberwise amenable if and only if $\Gamma$ is amenable. 

In particular, the group $\Gamma$ as a groupoid is fiberwise amenable
if and only if $\Gamma$ is amenable. In addition, it follows from
the homogeneity of a group to see that $\Gamma$ is amenable if and
only if it is ubiquitously fiberwise amenable . 
\end{rem}

\begin{rem}
\label{rem:fiberwise-amenability-noncompact}Fiberwise amenability
is not an interesting property for groupoids $\CG$ with noncompact
unit spaces, for it is automatically satisfied in this case. Indeed,
for any compact subset $K$ of $\CG$, if we choose an arbitrary point
$u$ in $\CG^{(0)}\setminus s(K)$, then $Ku=\varnothing$ and thus
$\{u\}$ becomes a $(K,0)$-F{\o}lner set. Ubiquitous fiberwise amenability
may still fail; an easy example being the disjoint union of two groupoids,
the first having a noncompact unit space and the second lacking ubiquitous
fiberwise amenability. 
\end{rem}

Focusing on the case of compact unit spaces, we next show that fiberwise
amenability implies the existence of invariant probability measures
on unit spaces. This directly generalizes the case of actions by amenable
groups on compact spaces. 
\begin{defn}
\label{def:invariant-measure}A measure on the unit space of an \'{e}tale groupoid $\CG$ is \emph{invariant} if $\mu(r(U))=\mu(s(U))$
for any Borel measurable bisection $U$. We write $M(\CG)$ for the collection
of all \emph{invariant regular Borel probability measures} on $\CG^{(0)}$. \end{defn}

Let $f\in C_c(\CG)_+$ be supported on an open bisection $U$. For simplicity, we define $s(f)=(f^**f)^{1/2}$ and $r(f)=(f*f^*)^{1/2}$, which are functions supported on $s(U)$ and $r(U)$, respectively. Note that by definition, $s(f)(u)=f((s|_{U})^{-1}(u))$ for $u\in s(U)$ and $r(f)(u)=f((r|_{U})^{-1}(u))$ for $u\in r(U)$.

\begin{prop}
	\label{4.12} 
	\label{prop:fiberwise-amenable-invariant-measure}
	Let $\CG$ be 
	an \'{e}tale groupoid with a compact unit space.
	Let $\{K_n \colon n\in \N\}$ be an increasing sequence of compact subsets in $\CG$ such that $\bigcup_{n\in \N}K_n^{\operatorname{o}}=\CG$. Let $\{\varepsilon_{n} \colon n\in\N\}$ be a decreasing sequence of positive numbers converging to $0$. Let $\{F_n \colon n\in \N\}$ be a sequence of finite subsets in $\CG$ such that  $|K_nF_n|<(1+\varepsilon_{n})|F_{n}|$ for any $n \in \N$. Then in the space of Borel probability measures on $\GU$, any $w^{\ast}$-cluster point $\mu$ of the sequence 
	\[
		\mu_{n}=\frac{1}{|F_{n}|}\sum_{x\in F_{n}}\delta_{r(x)} \, , \quad n = 0, 1, 2, \ldots
	\]
	of the probability measures belongs to $M(\CG)$. 
\end{prop}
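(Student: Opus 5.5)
The plan is to test invariance against functions: the goal is to show that
\[
\int_{\GU} s(f)\,d\mu=\int_{\GU} r(f)\,d\mu
\]
for every $f\in C_c(\CG)_+$ supported in an open bisection $U$. Afterwards I will upgrade this to $\mu(r(B))=\mu(s(B))$ for Borel bisections $B$ by regularity. Since $\GU$ is compact, $s(f)$ and $r(f)$ are continuous functions on $\GU$. So it suffices to prove $|\mu_n(s(f))-\mu_n(r(f))|\to 0$ and then pass to the cluster point $\mu$ along a subnet.

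\textbf{Step 1 (reindexing the sum).} Let $C=\supp f$, a compact subset of $U$. For $x\in F_n$ with $r(x)\in s(C)$, let $\gamma_x\in C$ be the unique element with $s(\gamma_x)=r(x)$. Then $s(f)(r(x))=f(\gamma_x)=r(f)(r(\gamma_x x))$. The map $x\mapsto\gamma_x x$ is injective, because left multiplication by a bisection is injective on elements with a fixed range. Its image is contained in $CF_n$. Hence
\[
\sum_{x\in F_n}s(f)(r(x))=\sum_{y\in CF_n}r(f)(r(y)).
\]

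\textbf{Step 2 (comparing with $F_n$).} Next I compare the right-hand side with $\sum_{y\in F_n}r(f)(r(y))$. The difference is bounded by $\|f\|_\infty$ times the sum of two counts.
\begin{itemize}
\item The first count is $|CF_n\setminus F_n|$.
\item The second count is the number of $y\in F_n\setminus CF_n$ with $r(y)\in r(C)$. For such $y$, write $y=\gamma z$ with $\gamma\in C$ and $z=\gamma^{-1}y$. Then $z\notin F_n$, since otherwise $y\in CF_n$. Hence $z\in C^{-1}F_n\setminus F_n$, and $y\mapsto z$ is injective.
\end{itemize}
Put $K'=C\cup C^{-1}\cup\GU$, which is compact because $\GU$ is compact. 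The total error is then at most $2\|f\|_\infty|K'F_n\setminus F_n|$.

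Since the open sets $K_n^{\operatorname{o}}$ increase and cover $\CG$, compactness gives $K'\subseteq K_n$ for all large $n$. In particular $\GU\subseteq K_n$, so $F_n\subseteq K_nF_n$. It follows that
\[
|K'F_n\setminus F_n|\le |K_nF_n|-|F_n|<\varepsilon_n|F_n|.
\]
Dividing by $|F_n|$ gives $|\mu_n(s(f))-\mu_n(r(f))|\le 2\|f\|_\infty\varepsilon_n\to0$, and therefore $\mu(s(f))=\mu(r(f))$.

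\textbf{Step 3 (from functions to sets).} I expect this step to be the main technical obstacle, though it is routine measure theory. For an open bisection $W$, define two regular Borel measures on $W$ by $\nu_s(A)=\mu(s(A))$ and $\nu_r(A)=\mu(r(A))$; these are pullbacks of $\mu$ under the homeomorphisms $s|_W$ and $r|_W$. Step 2 says that $\nu_s$ and $\nu_r$ integrate every $f\in C_c(W)_+$ identically. By the Riesz representation theorem they coincide, so $\mu(s(A))=\mu(r(A))$ for all Borel $A\subseteq W$.

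For a general Borel bisection $B$, I use inner regularity of $\mu$ on $s(B)$ and on $r(B)$ and work with compact subsets $K\subseteq B$. Each such $K$ is covered by finitely many open bisections $W_1,\dots,W_m$. Splitting $K$ into the disjoint Borel pieces $K\cap W_i\setminus\bigcup_{j<i}W_j$ and using injectivity of $s$ and $r$ on $B$, I get $\mu(s(K))=\mu(r(K))$. Taking suprema over such $K$ then gives $\mu(s(B))=\mu(r(B))$. The point to check carefully is that compact subsets of $B$ exhaust both $s(B)$ and $r(B)$ in measure simultaneously. I would handle this by approximating $s(B)$ from inside by a compact set $D$ and taking $K=B\cap s^{-1}(D)$. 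This $K$ is compact if $B$ lies in a compact set, and the general case reduces to that one by a countable partition of $B$ using $\sigma$-compactness of the relevant pieces.
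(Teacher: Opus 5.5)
Your proof is correct and follows the paper's approach. You test $\mu(s(f))=\mu(r(f))$ on functions supported in bisections, reindex $\sum_{x\in F_n}s(f)(r(x))$ as a sum over $CF_n$, and absorb the error into $|K_nF_n\setminus F_n|$ once $K_n$ contains $C\cup C^{-1}$. Your Step~2 estimate, however, is more careful than the paper's, and the difference matters.

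The paper bounds the error by $|KF_n\,\Delta\,F_n|/|F_n|$ and asserts $F_n\setminus KF_n=\partial^-_{K^{-1}}F_n$. This overlooks the points $x\in F_n$ with $r(x)\notin r(K)$: for these $K^{-1}x=\varnothing$, yet $x\notin KF_n$. Such points can be a fixed fraction of $F_n$. For the trivial $\Z$-action on $\{a,b\}$, take $K=\{(a,1,a)\}$ and $F=\{(x,k,x):x\in\{a,b\},\ 0\le k<N\}$. These sets are as F\o lner as one likes for large $N$, yet $|F\setminus KF|=N+1>|F|/2$.

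You avoid this problem in two ways. You count only those $y$ with $r(y)\in r(C)$; the others contribute $0$ because $r(f)$ vanishes off $r(C)$. You then inject these $y$ into $C^{-1}F_n\setminus F_n$ via $y\mapsto\gamma^{-1}y$. In addition, putting $\CG^{(0)}$ into $K'$ gives the inclusion $F_n\subseteq K_nF_n$, which the paper uses tacitly.

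You also justify the passage from functions to Borel bisections, which the paper takes for granted. The first paragraph of your Step~3 (Riesz uniqueness on an open bisection) already suffices. Under the paper's definition, a Borel bisection has one open neighbourhood on which $s$ is a homeomorphism onto an open set and another on which $r$ is. It therefore lies in their intersection, which is an open bisection.

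The second paragraph contains a false claim. $B\cap s^{-1}(D)$ need not be compact just because $B$ lies in a compact set. For example, take $B=[0,\tfrac12)\times\{0\}\cup[\tfrac12,1]\times\{1\}$ in the groupoid $[0,1]\times\Z/2$ of the trivial action, and $D=[0,1]$. This paragraph is dispensable, though. For Borel sets on which $s$ and $r$ are merely injective, partition $B$ into countably many Borel pieces, each inside an open bisection. This is possible because $\CG=\bigcup_nK_n$ is $\sigma$-compact. Countable additivity then gives $\mu(s(B))=\mu(r(B))$ with no inner-regularity argument.
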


\begin{proof}
	It suffices to show that $\mu(r(f))=\mu(s(f))$ for any function
	$f\in C_{c}(\CG)_{+}$ whose support $\spp(f)$ is a compact bisection.
	We may also assume $\|f\|\leq1$. Write $K=\spp(f)$ for simplicity.
	Note that $r(f),s(f)$ are functions supported on $r(K)$ and $s(K)$,
	respectively.

	By passing to subsequences, we may assume without loss of generality that $\mu_{n}\rightarrow\mu$ in the $w^{\ast}$-topology. 
	We show that $\mu\in M(\CG)$ by estimating
	the following 
	\[
	|\mu(r(f))-\mu(s(f))|\leq|\mu(r(f))-\mu_{n}(r(f))|+|\mu_{n}(r(f))-\mu_{n}(s(f))|+|\mu(s(f))-\mu_{n}(s(f))|.
	\]
Now note that
	\begin{align*}
	|\mu_{n}(r(f))-\mu_{n}(s(f))| & =|\frac{1}{|F_{n}|}(\sum_{x\in F_{n}}r(f)(r(x))-\sum_{x\in F_{n}}s(f)(r(x)))|\\
	& =|\frac{1}{|F_{n}|}(\sum_{x\in F_{n}}r(f)(r(x))-\sum_{\substack{x\in F_n\\r(x)\in s(K)}}s(f)(r(x)))|\\
	& =|\frac{1}{|F_{n}|}(\sum_{x\in F_{n}}r(f)(r(x))-\sum_{x\in KF_{n}}r(f)(r(x)))|\\
	& =\frac{1}{|F_{n}|}|\sum_{x\in KF_{n}\Delta F_{n}}r(f)(r(x))|\leq\|r(f)\|\frac{|KF_{n}\Delta F_{n}|}{|F_{n}|}.
	\end{align*}
Because $K$ is a bisection, if $x\in KF_n$, there is a (unique) $y\in K$ such that $y^{-1}x\in F_n$. This implies that
\[F_n\cap KF_n=\{x\in F_n: K^{-1}x\subset F_n\}.\]
Therefore, one has 
\[F_n\setminus KF_n=\{x\in F_n: K^{-1}x\cap F^c_n\neq \emptyset\}=\partial^-_{K^{-1}} F_n.\]
Then since $\partial^-_{K^{-1}}F_n\subset K\cdot\partial^+_{K^{-1}}F_n$, there is a $k>0$ such that $|\partial^-_{K^{-1}}F_n|\leq k|\partial^+_{K^{-1}}F_n|$ as $K$ is compact. This further implies
	\[|KF_n\Delta F_n|=|KF_n\setminus F_n|+|F_n\setminus KF_n|\leq |KF_n\setminus F_n|+k|\partial^{+}_{K^{-1}}F_n|.\]
	
	Now for every $\varepsilon>0$ we choose an $n$ big enough such that $K\cup K^{-1}\subset K_n$, $6(k+1)\varepsilon_n\leq \varepsilon$, $|\mu(r(f))-\mu_n(r(f))|<\varepsilon/3$ and $|\mu(s(f))-\mu_n(s(f))|<\varepsilon/3$. This implies that
	\[|\mu(r(f))-\mu(s(f))|<\varepsilon.\]
	This establishes $\mu(s(f))=\mu(r(f))$ as desired.
\end{proof}

\begin{cor}
\label{cor:fiberwise-amenable-invariant-measure}
Let $\CG$ be a fiberwise amenable, $\sigma$-compact,
\'{e}tale groupoid with a compact unit space.
Then $M(\CG)\neq\emptyset$.
\end{cor}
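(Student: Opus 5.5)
The plan is to derive this directly from Proposition~\ref{prop:fiberwise-amenable-invariant-measure}. Its hypotheses call for three ingredients: an exhausting sequence of compact sets, Følner-type finite sets adapted to them, and a cluster point of the associated empirical measures. We build each in turn.

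First, since $\CG$ is $\sigma$-compact and locally compact, we choose an increasing sequence of compact sets $K_0\subseteq K_1\subseteq\cdots$ with $K_n\subseteq \interior{K_{n+1}}$ and $\bigcup_n K_n=\CG$. This is the standard exhaustion already used in the proof of Lemma~\ref{lem:proper-function}. It follows that $\bigcup_n \interior{K_n}=\CG$. We also fix $\varepsilon_n=2^{-n}$.

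Second, fiberwise amenability gives, by Proposition~\ref{prop:fiberwise-amenability-metric}\eqref{prop:fiberwise-amenability-metric::fiberwise-amenable}, a nonempty finite set $F_n$ with $|K_nF_n|\leq(1+\varepsilon_n/2)|F_n|<(1+\varepsilon_n)|F_n|$. Here we apply the proposition with $\varepsilon_n/2$ to obtain the strict inequality demanded in Proposition~\ref{prop:fiberwise-amenable-invariant-measure}. If one prefers to avoid the $\sigma$-compactness hypothesis behind that equivalence, the same bound follows from Definition~\ref{def:fiberwise-amenable} directly. Indeed, $K_nF_n\subseteq F_n\cup\partial^+_{K_n}F_n$, so a $(K_n,\varepsilon_n/2)$-F{\o}lner set already satisfies the bound.

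Third, we form the probability measures $\mu_n=\frac{1}{|F_n|}\sum_{x\in F_n}\delta_{r(x)}$ on $\GU$. Since $\GU$ is compact Hausdorff, the Riesz representation theorem and Banach--Alaoglu show that the regular Borel probability measures form a weak$^*$-compact set. Hence $(\mu_n)$ has a $w^*$-cluster point $\mu$, which is again a regular Borel probability measure. Proposition~\ref{prop:fiberwise-amenable-invariant-measure} then gives $\mu\in M(\CG)$, and so $M(\CG)\neq\emptyset$.

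There is no real obstacle; the only points needing care are the exhaustion by interiors and the weak$^*$ compactness. One small caveat is that the measures may be viewed as a net rather than a sequence, but Proposition~\ref{prop:fiberwise-amenable-invariant-measure} only asks for a cluster point, so compactness suffices.
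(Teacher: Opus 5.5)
Your proposal is correct and follows the paper's route: the paper's proof says only that the corollary follows directly from Proposition~\ref{prop:fiberwise-amenable-invariant-measure}. You supply the routine inputs that this leaves implicit, namely a compact exhaustion with $\bigcup_n \interior{K_n}=\CG$ from $\sigma$-compactness, finite sets with $|K_nF_n|<(1+\varepsilon_n)|F_n|$ via $K_nF_n\subseteq F_n\cup\partial^+_{K_n}F_n$, and a weak$^*$ cluster point (not necessarily a subsequential limit) from compactness of the regular Borel probability measures on the compact space $\GU$.
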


\begin{proof}
	This follows directly from Proposition~\ref{prop:fiberwise-amenable-invariant-measure}. 
\end{proof}

We remark that the converse of the above result is not true. For example, Hjorth and Molberg showed in \cite{HjorthMolberg2006Free}  that all countable discerete group admits a free action on Cantor set  with a probability invariant measure. This means there is a  transformation groupoid $\CG$ of a certain action of a free group  whose $M(\CG)$ is not empty (a concrete example of this kind is given by the odometer action of the free group $\mathbb{F}_2$ on one of its profinite completions). However, Remark \ref{rem:transformation-groupoid-fiberwise-amenable} shows that such $\CG$ is not fiberwise amenable.

\subsection{The case of minimal groupoids}\label{subsec:minimal}

In this subsection, we show that for minimal groupoids, fiberwise
amenability is also equivalent to the a priori stronger notion of ubiquitous
fiberwise amenability. The strategy to show the former implies the
latter, roughly speaking, is: on the one hand, a F{\o}lner set on a single
source fiber, is always able to ``permeate'' horizontally to nearby
fibers; on the other hand, the recurrence behavior guaranteed by minimality
allows every source fiber to ``pick up'' a F{\o}lner set from this
permeation every so often, thus resulting in ubiquitous fiberwise
amenability.

To explain how this ``permeation'' arises, it is convenient to use
the following result about the existence of local trivializations
that almost preserve the metric. 
\begin{lem}[\emph{Local Slice Lemma}]
\label{lem:local-slice}Let $\CG$ be a $\sigma$-compact \'{e}tale groupoid and let $\rho$ be a canonical extended
metric on $\CG$ induced by a coarse continuous length function $\ell$
as in Definition~\ref{def:coarse-metric}. Let $u\in\CG^{(0)}$.
Then for any $R,\eps>0$, there are a number $S\in[R,R+\eps)$, an
open neighborhood $V$ of $u$ in $\CG^{(0)}$, an open set $W$ in
$\CG$, and a homeomorphism $f:\bar{B}_{\rho}(u,S)\times V\to W$
such that 
\begin{enumerate}
\item $f(u,v)=v$ for any $v\in V$, 
\item $f\left(x,u\right)=x$ for any $x\in\bar{B}_{\rho}(u,S)$,
\item $f\left(\bar{B}_{\rho}(u,S)\times\{v\}\right)=\bar{B}_{\rho}(v,S)$
for any $v\in V$, and
\item $\left|\rho\left(x,y\right)-\rho\left(f(x,v),f(y,v)\right)\right|<\eps$
for any $x,y\in\bar{B}_{\rho}(u,S)$ and $v\in V$. 
\end{enumerate}
\end{lem}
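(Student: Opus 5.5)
The plan is to build $f$ from finitely many local inverse branches of the right-multiplication picture. Every $x\in\bar{B}_\rho(u,S)$ has the form $x=\gamma u$ with $\gamma=x$ and $s(x)=u$. Near $u$ we will use local sections of the source map: for each such $x$, pick an open bisection through $x$, and translate $x$ to the nearby point of that bisection with source $v$.

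\textbf{Step 1: choose the radius and the finite ball.}
Since $\ell$ is proper and $\ell(\GU)=0$, the set $\{z\in\CG_u:\ell(z)\le R+\eps\}$ is finite; this is the uniform local finiteness of Lemma~\ref{lem:groupoid-uniformly-loc-finite} restricted to one fiber. Its values of $\ell$ form a finite set, so we can pick $S\in[R,R+\eps)$ with a gap:
\[
\ell(x)\le S-\eta \ \text{ or } \ \ell(x)\ge S+\eta \quad \text{for all } x\in\CG_u,\ \ell(x)\le R+\eps,
\]
for some small $\eta>0$. We also shrink $\eta$ below $\eps/2$. Write $\bar{B}_\rho(u,S)=\{x_1=u,x_2,\dots,x_n\}$.

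\textbf{Step 2: choose bisections and define $f$.}
Choose pairwise disjoint open bisections $U_i\ni x_i$, with $U_1\subseteq\GU$. For $i\neq j$, the composition $U_iU_j^{-1}$ is an open bisection near $x_ix_j^{-1}$, and $\ell$ is continuous there. Set $V$ to be a small open neighborhood of $u$ inside $\bigcap_i s(U_i)$, and define
\[
f(x_i,v)=(s|_{U_i})^{-1}(v).
\]
Then $f$ is a homeomorphism onto $W=\bigsqcup_i (s|_{U_i})^{-1}(V)$, since the domain is the discrete set $\bar{B}_\rho(u,S)$ times $V$. Properties (1) and (2) are immediate from the construction.

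\textbf{Step 3: almost-isometry, property (4).}
We have $\rho(f(x_i,v),f(x_j,v))=\ell(y_iy_j^{-1})$, where $y_i=f(x_i,v)$. The element $y_iy_j^{-1}$ lies in the bisection $U_iU_j^{-1}$ and depends continuously on $v$, tending to $x_ix_j^{-1}$ as $v\to u$. Continuity of $\ell$ therefore lets us shrink $V$ so that every pair $i,j$ is within $\eta<\eps$. This also gives $\ell(f(x_i,v))\le S-\eta/2$, so $f(\bar{B}_\rho(u,S)\times\{v\})\subseteq\bar{B}_\rho(v,S)$.

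\textbf{Step 4: the reverse inclusion in property (3) — the main obstacle.}
We must show that $\bar{B}_\rho(v,S)$ contains no extra points for $v$ near $u$. We argue by contradiction. Suppose there are $v_k\to u$ and $z_k\in\CG_{v_k}$ with $\ell(z_k)\le S$ and $z_k\notin W$. By properness of $\ell$ the $z_k$ (which are not units, since $z_k\notin W\supseteq V$) lie in a compact set, so a subnet converges to some $z$. Continuity gives $s(z)=u$ and $\ell(z)\le S$, hence $\ell(z)\le S-\eta$ by the gap, so $z=x_i$ for some $i$. But $(s|_{U_i})^{-1}(V)$ is an open neighborhood of $x_i$, so eventually $z_k$ lies in it, i.e. in $W$, a contradiction.

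Since we can shrink $V$ finitely many times in Steps 3 and 4, a single neighborhood $V$ works for all four properties. The gap choice of $S$ in Step 1 is exactly what makes the compactness argument in Step 4 close.
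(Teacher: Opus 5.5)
Your proposal is correct and follows essentially the same route as the paper. You choose $S$ so that $\ell<S$ strictly on the finite ball $\bar{B}_\rho(u,S)$, and define $f$ through local inverses of $s$ on bisections through the points of that ball. Continuity of $\ell$ then gives property (4) and $f(\bar{B}_\rho(u,S)\times\{v\})\subseteq\bar{B}_\rho(v,S)$, and properness of $\ell$ gives the reverse inclusion.

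The differences are minor. You get injectivity from pairwise disjoint bisections, whereas the paper gets it from the term $\rho(x,y)/2$ in its $\eta$. The paper obtains the reverse inclusion directly, by deleting the compact set $s\bigl(\ell^{-1}([0,S])\setminus\bigcup_x U_x\bigr)$ from $\GU$. Your Step 4 reaches it by contradiction, and that argument should use a net indexed by the neighborhoods of $u$ rather than a sequence $v_k\to u$, since $\GU$ need not be first countable (e.g.\ $\beta Y$ for coarse groupoids).
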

\begin{proof}
By Lemma~\ref{lem:groupoid-uniformly-loc-finite}, the ``open''
ball $B_{\rho}(u,R+\eps)$, i.e., the set $\left\{ x\in\CG_{u}:\ell(x)<R+\eps\right\} $,
is finite, and thus 
\[
\overline{\ell}\left(B_{\rho}(u,R+\eps)\right)=\max\left\{ \ell(x):x\in B_{\rho}(u,R+\eps)\right\} <R+\eps.
\]
Hence we may choose $S\in[R,R+\eps)\cap\left(\overline{\ell}\left(B_{\rho}(u,R+\eps)\right),R+\eps\right)$,
e.g., 
\[
S=\max\left\{ R,\frac{\overline{\ell}\left(B_{\rho}(u,R+\eps)\right)+R+\eps}{2}\right\} ,
\]
which guarantees $\bar{B}_{\rho}(u,S)=B_{\rho}(u,R+\eps)$ and thus
$S>\overline{\ell}\left(\bar{B}_{\rho}(u,S)\right)$. 

For each $x\in\bar{B}_{\rho}(u,S)$, choose an open bisection $U_{x}$
containing $x$ and let $f_{x}:s\left(U_{x}\right)\to U_{x}$ be the
inverse of the homeomorphism $s\mid_{U_{x}}$. Without loss of generality,
we may assume $U_{u}=\CG^{(0)}$ and $f_{u}$ is the identity map.
Define 
\[
L=\ell^{-1}([0,S])\setminus\left(\bigcup_{x\in\bar{B}_{\rho}(u,S)}U_{x}\right)\quad\mbox{and}\quad U=\CG^{(0)}\setminus s(L).
\]
Unpacking the definition and using the fact $\bar{B}_{\rho}(v,S)=\ell^{-1}([0,S])\cap s^{-1}(v)$
for any $v\in\CG^{(0)}$, we have 
\begin{equation}
U=\left\{ v\in\CG^{(0)}:\bar{B}_{\rho}(v,S)\subseteq\bigcup_{x\in\bar{B}_{\rho}(u,S)}U_{x}\right\} \label{eq:rewriting-U}
\end{equation}
and, in particular, $u\in U$. Since $\ell$ is proper and continuous as well as $L\cap \GU=\emptyset$,
we see that $L$ is compact and hence $U$ is an open neighborhood
of $u$ in $\CG^{(0)}$. Define a continuous map 
\[
f:\bar{B}_{\rho}(u,S)\times U\to\CG,\quad(x,v)\mapsto f_{x}(v).
\]
It follows from the construction of the $f_{x}$'s that 
\begin{enumerate}
\item $f(u,v)=v$ for any $v\in U$, and
\item $f\left(x,u\right)=x$ for any $x\in\bar{B}_{\rho}(u,S)$. 
\end{enumerate}
We also have $\left(s\circ f\right)\left(x,v\right)=v$ for any $(x,v)\in\bar{B}_{\rho}(u,S)\times U$.
It then follows from \eqref{eq:rewriting-U} that 
\begin{equation}
f\left(\bar{B}_{\rho}(u,S)\times\{v\}\right)\supseteq\bar{B}_{\rho}(v,S)\quad\mbox{for any }v\in U.\label{eq:f-surjective-half}
\end{equation}

Now we define a finite collection of continuous maps
\[
g_{xy}:U\to[0,\infty),\quad v\mapsto\rho\left(f(x,v),f(y,v)\right),
\]
for $x,y\in\bar{B}_{\rho}(u,S)$ and define 
\[
\eta=\min\left\{ \eps,S-\overline{\ell}\left(\bar{B}_{\rho}(u,S)\right),\frac{\rho(x,y)}{2}:x,y\in\bar{B}_{\rho}(u,S)\mbox{ with }x\not=y\right\} .
\]
Note that $\eta>0$ by our choice of $S$. By continuity, there exists
an open neighborhood $V$ of $u$ inside $U$ such that $\left|g_{xy}(u)-g_{xy}(v)\right|<\eta$
for any $v\in V$ and $x,y\in\bar{B}_{\rho}(u,S)$. This choice implies
the following: 
\begin{enumerate}
\item[(3)] For any $x,y\in\bar{B}_{\rho}(u,S)$ and $v\in V$, since $g_{xy}(u)=\rho(x,y)$,
we have 
\[
\left|\rho\left(x,y\right)-\rho\left(f(x,v),f(y,v)\right)\right|<\eps.
\]

\item[(4)] For any $x\in\bar{B}_{\rho}(u,S)$ and $v\in V$, we have $\ell(f(x,v))=\rho\left(f(x,v),f(u,v)\right)=g_{xu}(v)<g_{xu}(u)+\eta=\ell(x)+\eta\leq\overline{\ell}\left(\bar{B}_{\rho}(u,S)\right)+\eta\leq S$,
and thus combined with \eqref{eq:f-surjective-half}, we have 
\[
f\left(\bar{B}_{\rho}(u,S)\times\{v\}\right)=\bar{B}_{\rho}(v,S).
\]
\end{enumerate}
\begin{itemize}
\item For any $v\in V$ and any $x,y\in\bar{B}_{\rho}(u,S)$ with $x\not=y$,
we have $\rho\left(f(x,v),f(y,v)\right)=g_{xy}(v)>g_{xy}(u)-\eta=\rho(x,y)-\eta>0$
and thus $f(x,v)\not=f(y,v)$. This implies that the collection $\left\{ f_{x}(V):x\in\bar{B}_{\rho}(u,S)\right\} $
of open sets is disjoint and $f$ is a homeomorphism onto its image
when restricted to $\bar{B}_{\rho}(u,S)\times V$. 
\end{itemize}
Defining $W=f\left(\bar{B}_{\rho}(u,S)\times V\right)$ and restricting
$f$ to $\bar{B}_{\rho}(u,S)\times V$ thus completes the construction. 
\end{proof}
The existence of local slices as in Lemma~\ref{lem:local-slice}
allows us to ``clone'' a F{\o}lner set in every nearby source fiber. 
\begin{lem}
	\label{lem:Folner-permeates}Let $\CG$ be a $\sigma$-compact \'{e}tale groupoid and let $\rho$ be a canonical extended
	metric on $\CG$ induced by a coarse continuous length function $\ell$
	as in Definition~\ref{def:coarse-metric}. Let $R,S,\eps>0$ with $R>\eps$ and
	$u\in\CG^{(0)}$. Then there is an open neighborhood $V$ of $u$
	in $\CG^{(0)}$ such that whenever there exist $v_{0}\in V$ and an
	$(R,\eps)$-F{\o}lner set in $\bar{B}_{\rho}\left(v_{0},S\right)$, then
	for any $v\in V$, there is an $(R-\eps,\eps)$-F{\o}lner set in $\bar{B}_{\rho}\left(v,S+\varepsilon\right)$. \end{lem}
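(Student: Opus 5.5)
The plan is to use the Local Slice Lemma (Lemma~\ref{lem:local-slice}) to move the given F{\o}lner set from the fiber over $v_0$ to the fiber over $v$, by composing two slice maps. I would apply the lemma at the unit $u$ with radius $R_0 := S+2R+\eps$ and tolerance $\eps/2$. This gives a number $S_0\in[R_0,R_0+\eps/2)$, an open neighborhood $V$ of $u$, and a homeomorphism $f:\bar{B}_\rho(u,S_0)\times V\to W$ with properties (1)--(4). This $V$ is the neighborhood we want. For $v\in V$ write $f_v:=f(\cdot,v)$. By (3), $f_v$ is a bijection $\bar{B}_\rho(u,S_0)\to\bar{B}_\rho(v,S_0)$. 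By (1), $f_v(u)=v$. By (4), $f_v$ distorts distances by less than $\eps/2$.

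Now fix $v_0,v\in V$ and an $(R,\eps)$-F{\o}lner set $F\subseteq\bar{B}_\rho(v_0,S)$. Set $\varphi:=f_v\circ f_{v_0}^{-1}:\bar{B}_\rho(v_0,S_0)\to\bar{B}_\rho(v,S_0)$. This is a bijection with $\varphi(v_0)=v$, and it satisfies $|\rho(x,y)-\rho(\varphi x,\varphi y)|<\eps$ for all $x,y$ in its domain. Define $F':=\varphi(F)$, so $|F'|=|F|$. For $x\in F$ we have $\rho(\varphi x,v)<\rho(x,v_0)+\eps\le S+\eps$, so $F'\subseteq\bar{B}_\rho(v,S+\eps)$.

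It remains to show $|\partial_{R-\eps}F'|\le|\partial_R F|$, which gives $|\partial_{R-\eps}F'|/|F'|\le\eps$. Take $z\in\partial_{R-\eps}F'$. All points involved lie in the source fiber of $v$, since $\rho$ is fiberwise. Then $\rho(z,F')\le R-\eps$ and there is $z'\notin F'$ with $\rho(z,z')\le R-\eps$. These bounds give $\rho(z,v)\le S+\eps+(R-\eps)=S+R$ and $\rho(z',v)\le S+2R-\eps$, both below $R_0\le S_0$. Hence $z=\varphi(w)$ and $z'=\varphi(w')$ for some $w,w'$ in the domain, and $w'\notin F$ because $\varphi$ is injective. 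The distortion bound yields $\rho(w,F)<R$ and $\rho(w,w')<R$, so $w\in\partial_R F$. Since $\varphi$ is injective, $z\mapsto w=\varphi^{-1}(z)$ injects $\partial_{R-\eps}F'$ into $\partial_R F$, which is the claimed inequality. The hypothesis $R>\eps$ only ensures that $R-\eps>0$, so $\partial_{R-\eps}$ makes sense.

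The main obstacle is the bookkeeping in the last step. The slice radius $S_0$ has to be large enough to capture both the boundary points and their witnesses outside $F'$; this is why I take radius $S+2R+\eps$ rather than $S+R$. The distortion from the two slice maps, each less than $\eps/2$, must also sum strictly below $\eps$. I also need the preimage $w$ to be in the fiber of $v_0$ and within the domain; this holds because $\varphi$ maps $\bar{B}_\rho(v_0,S_0)$ onto $\bar{B}_\rho(v,S_0)$.
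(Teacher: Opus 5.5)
Your proof is correct and follows essentially the same route as the paper: apply the Local Slice Lemma at $u$ with a sufficiently large radius, transport $F$ by $f(\cdot,v)\circ f(\cdot,v_0)^{-1}$, and pull boundary points back to $\partial_R F$ using the distortion bound. The differences are only bookkeeping. You take radius $S+2R+\eps$, so that the witness $z'\notin F'$ also lies in the slice, and tolerance $\eps/2$ per slice map, which lets you handle $\partial_{R-\eps}F'$ in one stroke. The paper instead uses radius $S+R+\eps$ with a gap-dependent tolerance $\eta$, treats the outer and inner boundaries separately, and bounds the distance to points outside the slice ball directly via $\ell$.
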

\begin{proof}
	Let $R'=S+R+\eps$ and 
	\[
	\eta=\frac{1}{2}\min\left\{ \eps,\rho(x,y)-R+\eps:x,y\in\bar{B}_{\rho}(u,R'+\eps)\mbox{ with }\rho(x,y)>R-\eps\right\} .
	\]
	Applying Lemma~\ref{lem:local-slice} with $u$, $R'$, and $\eta$
	in place of $u$, $R$, $\eps$, we obtain a number $S'\in[R',R'+\eta)$,
	an open neighborhood $V$ of $u$ in $\CG^{(0)}$, an open set $W$
	in $\CG$, and a homeomorphism $f:\bar{B}_{\rho}(u,S')\times V\to W$
	such that 
	\begin{enumerate}
		\item $f(u,v)=v$ for any $v\in V$, 
		\item $f\left(x,u\right)=x$ for any $x\in\bar{B}_{\rho}(u,S')$, 
		\item $f\left(\bar{B}_{\rho}(u,S')\times\{v\}\right)=\bar{B}_{\rho}(v,S')$
		for any $v\in V$, and 
		\item $\left|\rho\left(x,y\right)-\rho\left(f(x,v),f(y,v)\right)\right|<\eta$
		for any $x,y\in\bar{B}_{\rho}(u,S')$ and $v\in V$. 
	\end{enumerate}
	Now assuming that there is an $(R,\eps)$-F{\o}lner set $F$ in $\bar{B}_{\rho}\left(v_{0},S\right)$,
	we then define, for any $v\in V$, the bijections 
	\[
	\tau_{v}:\bar{B}_{\rho}(u,S')\to\bar{B}_{\rho}(v,S'),\quad x\mapsto f(x,v)
	\]
	and the set 
	\[
	F_{v}=\tau_{v}\circ\tau_{v_{0}}^{-1}\left(F\right).
	\]
	For any $v\in V$, we claim that $F_{v}$ is the desired $(R,\eps)$-F{\o}lner
	set in $\bar{B}_{\rho}\left(v,S+\eps\right)$. Indeed, it follows
	from condition~(4) that 
	\[
	\overline{\ell}\left(F_{v}\right)<\overline{\ell}\left(\tau_{v_{0}}^{-1}\left(F\right)\right)+\eta\leq\overline{\ell}(F)+2\eta\leq S+\eps
	\]
	and thus $F_{v}\subseteq\bar{B}_{\rho}\left(v,S+\eps\right)$. On
	the other hand, to see $F_{v}$ is an $(R,\eps)$-F{\o}lner set just
	like $F$, it suffices to show that 
	\[
	\partial_{R-\varepsilon}^{+}F_{v}\subseteq\tau_{v}\circ\tau_{v_{0}}^{-1}\left(\partial_{R}^{+}F\right)\quad\mbox{and}\quad\partial_{R-\varepsilon}^{-}F_{v}\subseteq\tau_{v}\circ\tau_{v_{0}}^{-1}\left(\partial_{R}^{-}F\right).
	\]

	To prove the former containment, we observe that for any $y\in\partial_{R-\eps}^{+}F_{v}$,
	since $\ell(y)\leq\overline{\ell}\left(F_{v}\right)+R-\eps\leq S+R< R'\leq S'$,
	it is in the range of $\tau_{v}$. Let $x=\tau_{v_{0}}\circ\tau_{v}^{-1}(y)$.
	Since $y\not\in F_{v}$, we have $x\not\in F$. It remains to show
	that $\rho(x,F)\leq R$. Suppose this were not the case, i.e., for
	any $z\in F$, we have $\rho(x,z)>R$. First note that $\ell(\tau_{v_{0}}^{-1}(x))=\ell(\tau_{v}^{-1}(y))\leq \ell(y)+\eta\leq R'$ and  $\ell(\tau_{v_{0}}^{-1}(z))\leq \ell(z)+\eta\leq S+\eta<R'$ for any $z\in F$. Then by our choice of $\eta$,
	we would have 
	\begin{align*}
	\rho(y,F_{v})&=\min\left\{ \rho\left(\tau_{v}\circ\tau_{v_{0}}^{-1}(x),\tau_{v}\circ\tau_{v_{0}}^{-1}(z)\right):z\in F\right\}\\
	 &>\min\left\{ \rho\left(\tau_{v_{0}}^{-1}(x),\tau_{v_{0}}^{-1}(z)\right)-\eta:z\in F\right\} \geq R-\varepsilon,
	\end{align*}
	contradictory to the fact that $y\in\partial_{R-\varepsilon}^{+}F_{v}$. This
	shows $\partial_{R-\eps}^{+}F_{v}\subseteq\tau_{v}\circ\tau_{v_{0}}^{-1}\left(\partial_{R}^{+}F\right)$.
	
	To prove the latter containment, we observe that any $y\in\partial_{R-\eps}^{-}F_{v}$
	is in $F_{v}$ and thus we may define $x=\tau_{v_{0}}\circ\tau_{v}^{-1}(y)$
	in $F$. It remains to show that $\rho(x,\CG\setminus F)\leq R$.
	Suppose this were not the case. Note that $\ell(y)\leq \overline{\ell}\left(F_{v}\right)\leq S+\eps$. In addition, $\ell(\tau_{v_{0}}^{-1}(x))\leq S+\eta<R'$ and $\ell(\tau_{v_{0}}^{-1}(z))\leq S'+\eta<R'+\varepsilon$ for any $z\in \bar{B}_{\rho}(v_0,S')$. Then by the decomposition $\CG\setminus F_{v}=\left(\CG\setminus\bar{B}_{\rho}(v,S')\right)\cup\left(\bar{B}_{\rho}(v,S')\setminus F_{v}\right)$
	and our choice of $S$ and $\eta$, we would have 
	\begin{multline*}
		\rho(y,\CG\setminus F_{v})=\inf\left\{ \rho\left(y,w\right):w\in\CG\setminus F_{v}\right\} \\
		=\inf\left\{ \rho\left(y,w\right),\rho\left(\tau_{v}\circ\tau_{v_{0}}^{-1}(x),\tau_{v}\circ\tau_{v_{0}}^{-1}(z)\right):w\in \CG\setminus\bar{B}_{\rho}(v,S'),z\in\bar{B}_{\rho}(v_0,S')\setminus F\right\} \\
		>\min\left\{ S'-\ell(y),\rho\left(\tau_{v_{0}}^{-1}(x),\tau_{v_{0}}^{-1}(z)\right)-\eta :z\in\bar{B}_{\rho}(v_0,S')\setminus F\right\} \geq R-\eps,
	\end{multline*}
	contradictory to the fact that $y\in\partial_{R-\eps}^{-}F_{v}$. This
	shows $\partial_{R-\varepsilon}^{-}F_{v}\subseteq\tau_{v}\circ\tau_{v_{0}}^{-1}\left(\partial_{R}^{-}F\right).$
	and completes the proof. 
\end{proof}

The following lemma underlies the recurrence behavior of minimal groupoids
with compact unit spaces. 
\begin{lem}
\label{4.9} \label{lem:minimal-recurrence}
Let $\CG$ be an
\'{e}tale groupoid. Let $K$ and $V$ be subsets
of $\CG^{(0)}$ such that $K$ is compact, $V$ is open, and $K \subseteq r \left( \CG \cdot V \right)$. Then there are compact bisections $K_{1},\dots,K_{n}$
such that $\bigcup_{i=1}^{n}r\left(K_{i}\right)\subseteq V$ and $K\subseteq\bigcup_{i=1}^{n}s\left(K_{i}\right)^{\operatorname{o}}$. 
\end{lem}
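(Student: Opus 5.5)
We argue by a pointwise construction followed by compactness. Fix $w\in K$. Since $K\subseteq r(\CG\cdot V)$, there is $g_w\in\CG$ with $s(g_w)\in V$ and $r(g_w)=w$. The bisection we want has range in $V$ and source containing $w$ in its interior, so we use the inverse $h_w:=g_w^{-1}$, which satisfies $s(h_w)=w$ and $r(h_w)\in V$.

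First we find a compact bisection $K_w$ containing $h_w$ with $w\in s(K_w)^{\operatorname{o}}$ and $r(K_w)\subseteq V$. Because $\CG$ is étale, $h_w$ has an open bisection neighborhood $U$. Replacing $U$ by $U\cap r^{-1}(V)$, which is still an open bisection containing $h_w$ since $r$ is continuous and $V$ is open, we may assume $r(U)\subseteq V$. The map $s|_U:U\to s(U)$ is a homeomorphism onto an open subset of the locally compact Hausdorff space $\CG^{(0)}$. Hence we can choose an open neighborhood $O_w$ of $w$ with compact closure $\overline{O_w}\subseteq s(U)$, and set
\[
K_w:=(s|_U)^{-1}(\overline{O_w}).
\]
This set is compact, being the image of a compact set under a continuous map, and it is a bisection because it lies inside the open bisection $U$. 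Moreover $r(K_w)\subseteq r(U)\subseteq V$, and $s(K_w)=\overline{O_w}\supseteq O_w\ni w$, so $w\in s(K_w)^{\operatorname{o}}$.

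Second, the family $\{s(K_w)^{\operatorname{o}}:w\in K\}$ is an open cover of the compact set $K$. We extract a finite subcover indexed by $w_1,\dots,w_n$ and put $K_i:=K_{w_i}$. Then $\bigcup_{i=1}^n r(K_i)\subseteq V$ and $K\subseteq\bigcup_{i=1}^n s(K_i)^{\operatorname{o}}$, as required.

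No step is a real obstacle. The only point that needs care is the first one: we must shrink the open bisection so that its range lies in $V$, and we must take a compact neighborhood inside the source of the bisection. Both rely only on the local homeomorphism property of the étale groupoid and on local compactness. Note that minimality is not used here; in later applications it will guarantee the hypothesis $K\subseteq r(\CG\cdot V)$ for nonempty open $V$, because $r(\CG V)$ is then an open invariant set whose complement is a proper closed invariant set and hence empty.
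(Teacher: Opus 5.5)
Your proof is correct and follows essentially the same route as the paper. For each point of $K$ you take an arrow with that point as source and range in $V$, put it in the interior of a compact bisection whose range lies in $V$, and finish by extracting a finite subcover of $K$. Your explicit choice $K_w=(s|_U)^{-1}(\overline{O_w})$ simply spells out the construction the paper leaves implicit when it asserts the existence of such a compact bisection with $s(K_u^{\operatorname{o}})=s(K_u)^{\operatorname{o}}$.
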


Note that the condition $K \subseteq r \left( \CG \cdot V \right)$ holds automatically when $\CG$ is minimal and $V$ is nonempty. 

\begin{proof}
Since $K \subseteq r \left( \CG \cdot V \right)$, for any $u\in K$, there is a $v_u \in V$ and
$x_u \in \CG$ such that $r(x_u)=v_u$ and $s(x_u)=u$. Then since $\CG$ is
locally compact, Hausdorff and \'{e}tale, there is a compact bisection $K_{u}$
such that $x_u \in K_u^{\operatorname{o}}$ and $r \left( K_u \right) \subseteq V$. 
This implies that 
and $u=s(x_u)\in s\left(K_{u}^{\operatorname{o}}\right) = s\left(K_{u}\right)^{\operatorname{o}}$. 
Hence we have an open cover $\left\{ s\left(K_{u}\right)^{\operatorname{o}} \colon u \in K \right\}$ of $K$. 
By compactness, there are finitely many
compact bisections $K_{1},\dots,K_{n}$ such that $K\subseteq\bigcup_{i=1}^{n}s\left(K_{i}\right)^{\operatorname{o}}$.
In addition, our construction implies $\bigcup_{i=1}^{n}r\left(K_{i}\right)\subseteq V$. 
\end{proof}

Now we are ready to establish the equivalence of fiberwise amenability
and ubiquitously fiberwise amenability for minimal groupoids.
\begin{thm}
\label{4.01} \label{thm:minimal-fiberwise-amenability}Let $\CG$
be a $\sigma$-compact \'{e}tale groupoid. Suppose
$\CG$ is minimal. Then $\CG$ is fiberwise amenable if and only
if it is ubiquitously fiberwise amenable. \end{thm}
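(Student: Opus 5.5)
The implication ``ubiquitously fiberwise amenable $\Rightarrow$ fiberwise amenable'' is immediate from Definition~\ref{def:fiberwise-amenable}, since $\CG^{(0)}\neq\varnothing$. For the converse we use minimality to transport one F{\o}lner set to every fiber at uniformly bounded distance. Fix a canonical extended metric $\rho$ induced by a coarse continuous length function $\ell$. By Proposition~\ref{prop:fiberwise-amenability-metric}, and since $\ell$ is controlled, it suffices to show the following: for every $R,\eps>0$ there are $T\ge 0$ and, for every unit $w$, an $(R,\eps)$-F{\o}lner set in $\bar{B}_{\rho}(w,T)$.

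\emph{Step 1 (one fiber).} Fiberwise amenability gives an $(R+\eps,\eps)$-F{\o}lner set, where we may shrink $\eps$ so that $R+\eps>\eps$. By Lemma~\ref{lem:Folner-components}, since the coarse connected components are the source fibers, we may take it to be a set $F\subseteq\CG_{u_0}$ for some unit $u_0$. Being finite, $F\subseteq\bar{B}_\rho(u_0,S)$ for some $S$.

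\emph{Step 2 (permeation).} Apply Lemma~\ref{lem:Folner-permeates} with $R+\eps,S,\eps$ at $u_0$, taking $v_0=u_0$. This yields an open neighborhood $V$ of $u_0$ such that every $v\in V$ has an $(R,\eps)$-F{\o}lner set $F_v\subseteq\bar{B}_\rho(v,S+\eps)$.

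\emph{Step 3 (transport to arbitrary fibers).} Given $w$ and $x\in\CG$ with $s(x)=w$ and $r(x)=v\in V$, right multiplication by $x$ is an isometry $\CG_v\to\CG_w$, by right-invariance of $\rho$. Hence $F_vx$ is $(R,\eps)$-F{\o}lner and lies in $\bar{B}_\rho(x,S+\eps)\subseteq\bar{B}_\rho(w,S+\eps+\ell(x))$. The remaining task is a uniform bound on $\ell(x)$, and this is the main obstacle, because $\CG^{(0)}$ need not be compact.

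We handle it as follows. Let $K=\bar{B}(\CG^{(0)},R)$, which is a precompact set since $\ell$ is proper. Put $C=\overline{s(K\setminus\CG^{(0)})}$, which is compact. For $w\notin C$ we have $\bar{B}_\rho(w,R)=\{w\}$, so $\{w\}$ is already $(R,0)$-F{\o}lner. For $w\in C$, minimality gives $C\subseteq r(\CG V)$, so Lemma~\ref{lem:minimal-recurrence} provides compact bisections $K_1,\dots,K_n$ with $r(K_i)\subseteq V$ and $C\subseteq\bigcup_i s(K_i)^{\operatorname{o}}$. Thus each $w\in C$ admits some $x\in K_1\cup\dots\cup K_n$ with $s(x)=w$ and $r(x)\in V$. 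Since $\ell$ is controlled, $\ell(x)\le M:=\overline{\ell}(K_1\cup\dots\cup K_n)<\infty$. Taking $T=S+\eps+M$ gives the required uniform radius, and translating back through Proposition~\ref{prop:fiberwise-amenability-metric} shows that $\CG$ is ubiquitously fiberwise amenable.
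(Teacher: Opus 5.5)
Your proof is correct and follows the paper's argument essentially step for step. Both produce a single-fiber $(R+\eps,\eps)$-F{\o}lner set via Lemma~\ref{lem:Folner-components}, permeate it to a neighborhood $V$ via Lemma~\ref{lem:Folner-permeates}, take $\{w\}$ off the compact set $s(\ell^{-1}([0,R])\setminus\CG^{(0)})$, and transport into $V$ with uniformly bounded length via Lemma~\ref{lem:minimal-recurrence}. One small slip: $\bar{B}(\CG^{(0)},R)$ is not precompact when $\CG^{(0)}$ is noncompact, but you only use $\bar{B}(\CG^{(0)},R)\setminus\CG^{(0)}$, which is compact since $\ell$ is proper and continuous, so nothing breaks.
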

\begin{proof}
The ``if'' direction follows directly from the definitions. To show
the ``only if'' direction, we let $\rho$ be a canonical extended
metric on $\CG$ induced by a coarse continuous length function $\ell$
as in Definition~\ref{def:coarse-metric}. By Proposition~\ref{prop:fiberwise-amenability-metric},
it suffices to show, assuming the extended metric space $(\CG,\rho$)
is amenable, that it is also ubiquitously amenable, i.e., for every
$R>0$ and $\eps>0$, there exists an $S>0$ such that for any $x\in\CG$,
there is an $(R,\eps)$-F{\o}lner set $F$ in the ball $\bar{B}_{\rho}(x,S)$.
To this end, given $R,\eps>0$, since we assume $(\CG,\rho$) is amenable,
we know there exists an $(R+\eps,\eps)$-F{\o}lner set $F_{0}$ in $\CG$.
By Lemma~\ref{lem:Folner-components}, we may assume without loss
of generality that $F_{0}$ is contained in a single source fiber
$\CG_{u}$ for some $u\in\CG^{(0)}$. By Lemma~\ref{lem:Folner-permeates},
there is an open neighborhood $V$ of $u$ in $\CG^{(0)}$ such that
for any $v\in V$, there is an $(R,\eps)$-F{\o}lner set $F_{v}$ in
$\bar{B}_{\rho}\left(v,\overline{\ell}\left(F_{0}\right)+\eps\right)$.
Let 
\[
K=s\left(\ell^{-1}([0,R])\setminus\CG^{(0)}\right),
\]
which is a compact subset of $\CG^{(0)}$, as $\ell$ is a continuous
proper length function. By Lemma~\ref{lem:minimal-recurrence}, there
are precompact open bisections $V_{1},\dots,V_{n}$ such that $\bigcup_{i=1}^{n}r\left(V_{i}\right)\subseteq V$
and $K\subseteq\bigcup_{i=1}^{n}s\left(V_{i}\right)$. Let 
\[
S=\overline{\ell}\left(F_{0}\right)+\eps+\max\left\{ \overline{\ell}\left(V_{i}\right):i=1,\ldots,n\right\} ,
\]
which is finite since all the sets involved are precompact. Now, for
any $x\in\CG$, we need to construct an $(R,\eps)$-F{\o}lner set $F$
in $\bar{B}_{\rho}(x,S)$. There are two cases:
\begin{itemize}
\item If $r(x)\not\in K$, then $\bar{B}_{\rho}(x,R)=\bar{B}_{\rho}(r(x),R)x=\left(\CG_{r(x)}\cap\ell^{-1}([0,R])\right)x=\left\{ r(x)x\right\} =\{x\}$
by our choice of $K$, and thus we may set $F=\{x\}$, which is an
$(R,0)$-F{\o}lner set. 
\item If $r(x)\in K$, then we may choose $i_{x}\in\{1,\ldots,n\}$ such
that $r(x)\in s\left(V_{i_{x}}\right)$. Let $z\in V_{i_{x}}$ be
such that $r(x)=s(z)$. Note that $r(z)\in V$ and thus we have an
$(R,\eps)$-F{\o}lner set $F_{r(z)}$ in $\bar{B}_{\rho}\left(r(z),\overline{\ell}\left(F_{0}\right)+\eps\right)$.
Let $F=F_{r(z)}zx$, which is also an $(R,\eps)$-F{\o}lner set by the
right-invariance of $\rho$ (see Lemma~\ref{lem:length-function-metric}).
Finally, since for any $y\in F_{r(z)}$, we have $\rho(yzx,x)=\rho(yz,r(x))=\ell(yz)\leq\overline{\ell}\left(F_{r(z)}\right)+\overline{\ell}\left(V_{i_{x}}\right)\leq\overline{\ell}\left(F_{0}\right)+\eps+\overline{\ell}\left(V_{i_{x}}\right)\leq S$,
we conclude that $F$ is in the ball $\bar{B}_{\rho}(x,S)$. 
\end{itemize}
This completes the proof. 
\end{proof}

\begin{cor}
Let $\CG$ be a $\sigma$-compact \'{e}tale
groupoid. Suppose $\CG$ is minimal and $\CG^{(0)}$ is noncompact.
Then $\CG$ is ubiquitously fiberwise amenable. \end{cor}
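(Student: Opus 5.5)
The plan is to combine Remark~\ref{rem:fiberwise-amenability-noncompact} with Theorem~\ref{thm:minimal-fiberwise-amenability}. Since $\CG$ is minimal and $\sigma$-compact, Theorem~\ref{thm:minimal-fiberwise-amenability} says that it is enough to show $\CG$ is fiberwise amenable. By Definition~\ref{def:fiberwise-amenable}\eqref{def:fiberwise-amenable:original}, this means that for every compact $K\subseteq\CG$ and every $\eps>0$ we must produce a $(K,\eps)$-F{\o}lner set.

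To produce it, I will follow Remark~\ref{rem:fiberwise-amenability-noncompact}. Given a compact $K\subseteq\CG$, the set $s(K)$ is compact in $\GU$, because $s$ is continuous. Since $\GU$ is noncompact, $s(K)\neq\GU$, so we may pick a unit $u\in\GU\setminus s(K)$. Then $Ku=\varnothing$, and hence $\partial_{K}^{+}\{u\}=K\{u\}\setminus\{u\}=\varnothing$. Likewise $\partial_{K}^{-}\{u\}=\varnothing$: an element of it would require $yu\notin\{u\}$ for some $y\in K$, but no $y\in K$ is composable with $u$. So $\{u\}$ is $(K,0)$-F{\o}lner, and in particular $(K,\eps)$-F{\o}lner for every $\eps>0$. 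This shows $\CG$ is fiberwise amenable, and Theorem~\ref{thm:minimal-fiberwise-amenability} then upgrades this to ubiquitous fiberwise amenability.

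No step here is a genuine obstacle. The one point to check is that Theorem~\ref{thm:minimal-fiberwise-amenability} does not assume a compact unit space. It does not: its proof only invokes Lemma~\ref{lem:minimal-recurrence} on the compact set $s\left(\ell^{-1}([0,R])\setminus\GU\right)$, and minimality gives the hypothesis of that lemma for any nonempty open $V$.
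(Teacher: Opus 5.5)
Your proposal is correct and follows the paper's own argument: the paper proves this corollary in one line by combining Remark~\ref{rem:fiberwise-amenability-noncompact} (the singleton $\{u\}$ with $u\in\GU\setminus s(K)$ is $(K,0)$-F{\o}lner) with Theorem~\ref{thm:minimal-fiberwise-amenability}. Your check that both $\partial_K^{+}\{u\}$ and $\partial_K^{-}\{u\}$ are empty, and your observation that the theorem places no compactness hypothesis on $\GU$, fill in details the paper leaves implicit.
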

\begin{proof}
This follows from Theorem~\ref{thm:minimal-fiberwise-amenability}
and Remark~\ref{rem:fiberwise-amenability-noncompact}. 
\end{proof}

\subsection{Coarse groupoids}\label{subsec:coarse_groupoid}

In this subsection, we indicate a further connection between
metric amenability discussed in \Cref{sec:metric-amenability} and
fiberwise amenability via the construction of coarse groupoids. 
\begin{defn}[{\cite[Proposition 3.2]{SkandalisTuYu2002coarse}}]
	\label{def:coarse-groupoid}Let $(Y,d)$ be a uniformly locally finite
	extended metric space. The \emph{coarse groupoid} $\CG_{(Y,d)}$ associated
	to $(Y,d)$ is defined as follows:
	\begin{itemize}
		\item for any $r\geq0$, we define $E_{r}=\{(y,z)\in Y\times Y:d(y,z)\leq r\}$; 
		\item as a topological space, we have $\CG_{(Y,d)}=\bigcup_{r\geq0}\overline{E_{r}}$
		inside $\beta(Y\times Y)$, Stone-\v{C}ech compactification of $Y\times Y$; 
		\item we have $\CG_{(Y,d)}^{(0)}=\overline{E_{0}}\cong\beta Y$;
		\item the range and source maps are, respectively, the unique extensions
		of the first and second factor maps $Y\times Y\to Y$;
		\item the multiplication is the unique extension of the composition map
		$\CG_{(Y,d)}^{(2)}\cap((Y\times Y)\times(Y\times Y))\to(Y\times Y)$,
		$((y,z),(z,w))\mapsto(y,z)\circ(z,w)=(y,w)$, as we notice that $E_{r}\circ E_{s}\subseteq E_{r+s}$
		for any $r,s\geq0$. 
	\end{itemize}
	The uniform local finiteness of $(Y,d)$ implies that this groupoid
	is locally compact, Hausdorff, principal and \'{e}tale (c.f., \cite[Proposition~3.2]{SkandalisTuYu2002coarse}). It is also $\sigma$-compact by definition. \end{defn}
\begin{rem}
	\label{rem:coarse-groupoid-length}There is a canonical length function
	$\ell$ on $\CG_{(Y,d)}$ defined by extending the metric $d:Y\times Y\to[0,\infty]$
	to $\beta(Y\times Y)$ and observing that it takes finite values on
	$\CG_{(Y,d)}$. This length function is continuous by definition.
	It is also proper since by the density of $Y\times Y$ inside $\beta(Y\times Y)$
	, we have, for any $r>0$, the open set $\ell^{-1}([0,r))$ is contained
	in $\overline{\ell^{-1}([0,r))\cap(Y\times Y)}$, which in turn is
	contained in the compact set $\overline{E_{r}}$. Moreover, observe
	that for any $y\in Y$ viewed as a unit of $\CG_{(Y,d)}$, the source
	fiber $\left(\CG_{(Y,d)}\right)_{y}$, under the invariant fiberwise
	extended metric induced by $\ell$, is isometric to $(Y,d)$. \end{rem}
\begin{prop}
	\label{prop:coarse-groupoid-amenable}Let $(Y,d)$ be a uniformly
	locally finite extended metric space. Then it is amenable (respectively,
	ubiquitously amenable) if and only if $\CG_{(Y,d)}$ is fiberwise
	amenable (respectively, ubiquitously fiberwise amenable). \end{prop}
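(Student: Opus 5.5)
By Proposition~\ref{prop:fiberwise-amenability-metric}, and since the length function $\ell$ of Remark~\ref{rem:coarse-groupoid-length} is a coarse continuous length function, $\CG_{(Y,d)}$ is fiberwise amenable (resp.\ ubiquitously fiberwise amenable) if and only if the extended metric space $(\CG_{(Y,d)},\rho_\ell)$ is amenable (resp.\ ubiquitously amenable). So the plan is to compare the metric amenability of $(Y,d)$ with that of $(\CG_{(Y,d)},\rho_\ell)$. The latter is the disjoint union of its source fibers (the coarse connected components, up to the further splitting of fibers by infinite distances). Each fiber over a point $y\in Y\subseteq\beta Y$ is isometric to $(Y,d)$ by Remark~\ref{rem:coarse-groupoid-length}.

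\emph{Amenable case.} If $(Y,d)$ is amenable, an $(R,\eps)$-F{\o}lner set in $Y$ transports isometrically into the fiber over any $y\in Y$, so $\CG_{(Y,d)}$ is amenable. Conversely, suppose $(\CG_{(Y,d)},\rho)$ is amenable. By Lemma~\ref{lem:Folner-components} we get an $(R+\eps,\eps)$-F{\o}lner set $F_0$ inside a single source fiber over some $u\in\beta Y$. By Lemma~\ref{lem:Folner-permeates}, there is an open neighborhood $V$ of $u$ in $\beta Y$ such that every fiber over $v\in V$ contains an $(R,\eps)$-F{\o}lner set. Since $Y$ is dense in $\beta Y$, $V$ contains some $y\in Y$. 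The fiber over $y$ is isometric to $Y$, so $Y$ has an $(R,\eps)$-F{\o}lner set. (The shift from $R+\eps$ to $R$ is harmless because $R,\eps$ are arbitrary.)

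\emph{Ubiquitous case.} If $\CG_{(Y,d)}$ is ubiquitously amenable, then for given $R,\eps$ there is $S$ with an $(R,\eps)$-F{\o}lner set in $\bar B_\rho(x,S)$ for every $x$. Applying this to $x$ in the fiber over some $y_0\in Y$ and transporting via the isometry gives ubiquitous amenability of $Y$. The converse is the main point. Here I would give a direct compactness argument rather than use minimality, since $\CG_{(Y,d)}$ need not be minimal. Given $R,\eps$, take $S$ from the ubiquitous amenability of $Y$, so that every $y\in Y$ has an $(R,\eps)$-F{\o}lner set in $\bar B(y,S)$, hence every point of a fiber over $Y$ does too. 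For an arbitrary unit $u\in\beta Y$, I would argue by uniform local finiteness: the balls $\bar B(y,S+R)$ have cardinality at most $\FN(S+R)$, so the configuration ``$(R,\eps)$-F{\o}lner set within $S$'' is a finite-type pattern. By the local slice Lemma~\ref{lem:local-slice} applied at $u$ with radius about $S+R$, the fiber over a nearby $y\in Y$ looks, up to distortion less than a small $\eta$, like the $S+R$ ball at $u$. Pulling back the F{\o}lner set at $y$ to $u$ with the argument of Lemma~\ref{lem:Folner-permeates} (in reverse, from $v_0=y$ to $v=u$) yields an $(R-\eps,\eps)$-F{\o}lner set in $\bar B_\rho(u,S+\eps)$. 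Right translation then handles arbitrary $x$ with $s(x)=u$, exactly as at the end of the proof of Theorem~\ref{thm:minimal-fiberwise-amenability}: apply the above at the unit $r(x)$ and translate by $x$.

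\emph{Main obstacle.} The delicate step is the uniformity in this last argument. Lemma~\ref{lem:Folner-permeates} only produces a neighborhood $V$ depending on $u$, but this is enough: for each $u$ we only need one $y\in Y\cap V$, and the resulting radius $S+\eps$ is independent of $u$. A cleaner alternative, which I would try first, is to use that $\ell$ is the extension of $d$. For $u\in\beta Y$ and $g=(y,z)$-nets, the fiber over $u$ is a limit of fibers over $y_\lambda\to u$. Since the F{\o}lner condition within a ball of size $\FN(S+R)$ involves only finitely many distances that are integer-free but extend continuously, one can pass to the limit along an ultrafilter, choosing the F{\o}lner set at $y_\lambda$ among finitely many patterns. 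Either way, the key input is Lemma~\ref{lem:local-slice} together with the density of $Y$ in $\beta Y$.
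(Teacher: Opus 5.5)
Your proposal is correct and follows essentially the same route as the paper. Both arguments reduce via Proposition~\ref{prop:fiberwise-amenability-metric}, use the isometric copies of $(Y,d)$ in the fibers over points of $Y$ for the two easy directions, and handle the two nontrivial directions with Lemma~\ref{lem:Folner-components} and Lemma~\ref{lem:Folner-permeates} together with the density of $Y$ in $\beta Y$, applied at $r(x)$ and then right-translated by $x$. Your tracking of the loss from $R$ to $R-\eps$ in Lemma~\ref{lem:Folner-permeates} is slightly more careful than the paper's, and the ultrafilter alternative you sketch is not needed.
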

\begin{proof}
	Let $\ell$ be the canonical length function on $\CG_{(Y,d)}$ given
	in Remark~\ref{rem:coarse-groupoid-length} and let $\rho$ be the
	induced invariant fiberwise extended metric. By Proposition~\ref{prop:fiberwise-amenability-metric},
	it suffices to show $(Y,d)$ is amenable (respectively, ubiquitously
	amenable) if and only if $\left(\CG_{(Y,d)},\rho\right)$ is. We observed
	that $(Y,d)$ is isometric to $\left(\left(\CG_{(Y,d)}\right)_{y},\rho\right)$
	for any $y\in Y$; thus $(Y,d)$ embeds isometrically into $\left(\CG_{(Y,d)},\rho\right)$
	as some of the coarse connected components. It follows that the amenability
	of $(Y,d)$ implies that of $\left(\CG_{(Y,d)},\rho\right)$, while
	the ubiquitous amenability of the latter implies that of the former. 
	
	Now we assume $\left(\CG_{(Y,d)},\rho\right)$ is amenable and show
	so is $(Y,d)$. Given $R,\varepsilon>0$, we apply Lemma~\ref{lem:Folner-components}
	to obtain a unit $u\in\beta Y$ and an $(R,\varepsilon)$-F{\o}lner set
	$F$ in $\left(\CG_{(Y,d)}\right)_{u}$, and then apply Lemma~\ref{lem:Folner-permeates}
	together with the density of $Y$ in $\beta Y$ to obtain $y\in Y$
	and an $(R,\varepsilon)$-F{\o}lner set $F'$ in $\left(\CG_{(Y,d)}\right)_{y}$.
	Since each $\left(\CG_{(Y,d)}\right)_{y}$ is isometric to $(Y,d)$
	and $R$ and $\varepsilon$ were chosen arbitrarily, this shows $(Y,d)$
	is amenable. 
	
	Finally we assume $(Y,d)$ is ubiquitously amenable and show so is
	$\left(\CG_{(Y,d)},\rho\right)$. Thus given $R,\varepsilon>0$, there
	exists $S>0$ such that for any $y\in Y$, there exists an $(R,\varepsilon)$-F{\o}lner
	set in $\bar{B}_{d}(y,S)$. Now given $x\in\CG_{(Y,d)}$, we claim
	there exists an $(R,\varepsilon)$-F{\o}lner set $F$ in $\bar{B}_{\rho}(x,S+\varepsilon)$.
	Indeed, if $s(x)\in Y$, then this follows from the fact that $\left(\CG_{(Y,d)}\right)_{s(x)}$
	is isometric to $(Y,d)$. If $s(x)\in\beta Y\setminus Y$ instead,
	then by Lemma~\ref{lem:Folner-permeates}, there is an open neighborhood
	$V$ of $r(x)$ in $\CG_{(Y,d)}^{(0)}$ such that the existence of
	an $(R,\eps)$-F{\o}lner set $F$ in $\bar{B}_{\rho}\left(r(x),S+\varepsilon\right)$
	is implied by the existence of an $(R,\eps)$-F{\o}lner set in $\bar{B}_{\rho}\left(v_{0},S\right)$
	for some $v_{0}\in V$, but the latter condition holds as soon as
	we pick $v_{0}\in Y\cap V$ by the density of $Y$ in $\beta Y$.
	It follows that $Fx$ is an $(R,\eps)$-F{\o}lner set in $\bar{B}_{\rho}\left(x,S+\varepsilon\right)$,
	as desired. 
\end{proof}
We find it intriguing that while Lemma~\ref{lem:Folner-permeates}
(which depends on the local slice lemma) is used in the above proof
for both amenability and ubiquitous amenability, the two instances
occur in opposite directions. 

\begin{rem}\label{rem:fa-vs-ta}
    It can be seen in the case of coarse groupoids that fiberwise amenability is neither stronger nor weaker than topological amenability, since the latter corresponds to Yu's property A on a metric space \cite[Theorem~5.3]{SkandalisTuYu2002coarse}, but there are easy examples that demonstrate metric amenability and property A do not cover each other (see, e.g., \cite[Remark~2.7]{AraLiLledoWu2018Amenabilitya}). 
\end{rem}

\subsection{Almost finite and purely infinite groupoids}\label{subsec:af-pi}

A major motivation that prompted us to introduce and study fiberwise amenability comes from a pair of properties that have played significant roles in the recent development of the theory of \'{e}tale groupoids\textemdash almost finitenss and pure infiniteness.  

\begin{defn}[{\cite[Definition 6.2]{Matui2012Homology}}]
	\label{6.1} \label{defn:Matui}
	An ample \'{e}tale
	groupoid $\CG$ with a compact unit space is called \textit{almost
		finite} if, for any compact set $K$ in $\CG$ and $\epsilon>0$, there
	is a compact open elementary subgroupoid $\CH$ of $\CG$ with $\HU=\GU$
	such that for any $u\in\GU$, one has 
	\[
	|K\CH u|<(1+\epsilon)|\CH u|
	\; .
	\]
\end{defn}

Comparing \Cref{defn:Matui} with \Cref{def:fiberwise-amenable}\eqref{def:fiberwise-amenable:ubiquitous}, we arrive at the following observation. 

\begin{prop}\label{prop:af-ufa}
    If an ample \'{e}tale groupoid $\CG$ is almost finite, then it is ubiquitously fiberwise amenable. 
\end{prop}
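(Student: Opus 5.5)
The proof will verify \Cref{def:fiberwise-amenable}\eqref{def:fiberwise-amenable:ubiquitous} in the equivalent form given by Proposition~\ref{prop:fiberwise-amenability-metric}\eqref{prop:fiberwise-amenability-metric::ubiquitously-fiberwise-amenable}. That proposition is stated for $\sigma$-compact groupoids, but the equivalence of its first two conditions only uses Remark~\ref{3.1-1} and the bound $|K^{-1}F|\leq m|F|$ for compact $K$. In this form we must produce, for a compact $K\subseteq\CG$ and $\varepsilon>0$, a compact $L\subseteq\CG$ such that every unit $u$ has a nonempty finite $F\subseteq Lu\cup\{u\}$ with $|KF|\leq(1+\varepsilon)|F|$.

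Given $K$ and $\varepsilon$, almost finiteness (\Cref{defn:Matui}) supplies a compact open elementary subgroupoid $\CH$ with $\HU=\GU$ and $|K\CH u|<(1+\varepsilon)|\CH u|$ for every $u\in\GU$. Set $L:=\CH$, which is compact, and for each $u\in\GU$ set $F:=\CH u$.

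We then check the requirements. Since $u\in\HU$, the set $F$ contains $u$ and is nonempty. It is finite because $\CH$ is compact and $\CG$ is étale: a compact set meets each source fiber in finitely many points, since $\CH$ is covered by finitely many open bisections and each meets $\CG_u$ in at most one point. By construction $F=\CH u\subseteq Lu\cup\{u\}$, and the Følner inequality $|KF|\leq(1+\varepsilon)|F|$ is exactly the almost finiteness estimate.

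The only point needing care is whether the cited equivalence applies without $\sigma$-compactness. To avoid this, one can argue directly from the $(K,\varepsilon)$-Følner definition. Apply almost finiteness to $K':=K\cup K^{-1}\cup\GU$ with a smaller $\varepsilon'$. Then $|\partial^+_KF|\leq|K'F|-|F|\leq\varepsilon'|F|$, and by Remark~\ref{3.1-1} we have $\partial^-_KF\subseteq K^{-1}\partial^+_KF$, so $|\partial^-_KF|\leq m\varepsilon'|F|$, where $m$ bounds $|K^{-1}x|$ uniformly; $m$ is finite by compactness of $K$. Choosing $\varepsilon'=\varepsilon/(1+m)$ finishes the proof. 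No substantial obstacle is expected.
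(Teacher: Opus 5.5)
Your proof is correct and follows the paper's argument, which simply observes that taking $L:=\CH$ and $F:=\CH u$ matches Matui's almost finiteness condition against the definition of ubiquitous fiberwise amenability. Your direct verification goes beyond the paper: you enlarge $K$ to $K\cup K^{-1}\cup\GU$ (using compactness of $\GU$ so that $F\subseteq K'F$) and use Remark~\ref{3.1-1} to bound the inner boundary, which supplies the step the paper leaves implicit and avoids any appeal to $\sigma$-compactness.
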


The other property in focus here, pure infiniteness, was also introduced by Matui \cite[Definition~4.9]{Matui2015Topological}: 
an ample \'{e}tale groupoid is \emph{purely infinite} if every clopen subset $C \subseteq \CG^{(0)}$ is the common source of two clopen bisections with disjoint ranges inside $C$. It is clear from the definition that a purely infinite groupoid admits no invariant probability measures on $\CG^{(0)}$; in particular, it cannot be fiberwise amenable or almost finite. 

Both almost finiteness and pure infiniteness may be regarded as regularity properties for groupoids, in that they each guarantee a number of nice results about groupoids hold (see, e.g., \cite{Matui2015Topological,Matui2016Etale}), even though the two properties are mutually exclusive. In a sequel \cite{MWAE}, we introduce the notion of \emph{almost elementariness} as a natural common generalization of almost finiteness and pure infiniteness, with fiberwise amenability serving as the criterion that distinguishes the two.

The following theorem is designed for application in \cite{MWAE}. It showcases
a ``dichotomy'' on (ubiquitous) fiberwise amenability against paradoxicality for $\sigma$-compact
\'{e}tale groupoids on compact spaces. 
We put ``dichotomy'' in quotation marks, as the theorem below leaves open the case of non-ubiquitously fiberwise amenable groupoids. However, in cases that interest us the most (e.g., minimal groupoids, for which we have \Cref{thm:minimal-fiberwise-amenability}), we have a true dichotomy. 
\begin{thm}
\label{4.11} 
\label{thm:fiberwise-amenable-dichotomy}
Let $\CG$ be a 
$\sigma$-compact \'{e}tale groupoid with a compact unit space. 
Then we have the following dichotomy. 
\begin{enumerate}
\item \label{thm:fiberwise-amenable-dichotomy:amenable} If $\CG$ is ubiquitous fiberwise amenable, then 
for any compact subset $K$ in $\CG$ and any $\varepsilon>0$, 
there is a compact set $L \subseteq \CG$ such that
for any finite set $M \subset\CG$, there
is a finite set $F$ satisfying 
\[
	M \subseteq F\subseteq LM \ \textrm{and}\ |KF|\leq(1+\varepsilon)|F|.
\]

\item \label{thm:fiberwise-amenable-dichotomy:non-amenable} If $\CG$ is not fiberwise amenable, then 
for any compact set $K \subseteq\CG$
and any $n\in\mathbb{N}$, there is a compact set $L \subseteq\CG$ such that
for any finite set $M\subseteq\CG$, the set
$LM$ contains at least $n|M|$ many disjoint sets of the form $K x$ for $x \in \CG$. 
\end{enumerate}
\end{thm}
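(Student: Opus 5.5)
The plan is to transfer both statements to the metric setting of Section~\ref{sec:metric-amenability} via a canonical extended metric, and then apply Propositions~\ref{3.5} and~\ref{3.7}. Fix a coarse continuous length function $\ell$ on $\CG$, which exists by Theorem~\ref{thm:coarse-length-functions}, and let $\rho=\rho_\ell$. By Lemma~\ref{lem:groupoid-uniformly-loc-finite}, $(\CG,\rho)$ is uniformly locally finite. By Proposition~\ref{prop:fiberwise-amenability-metric}, ubiquitous fiberwise amenability (resp.\ fiberwise amenability) of $\CG$ is equivalent to ubiquitous amenability (resp.\ amenability) of $(\CG,\rho)$. The dictionary between compact sets and balls comes from Lemma~\ref{lem:invariant-fiberwise-extended-metric}, which gives $\bar B(M,r)=\bar B(\GU,r)\cdot M$. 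Two facts make this dictionary work. Since $\ell$ is controlled, every compact $K$ satisfies $K\subseteq \bar B(\GU,r)$ for some $r$ (namely $r=\overline{\ell}(K)$). Since $\ell$ is proper and $\GU$ is compact, each $\bar B(\GU,S)=\GU\cup\{x\notin\GU:\ell(x)\le S\}$ is contained in a compact set; as $\ell$ is continuous, $\ell^{-1}([0,S])$ is closed, so it is in fact compact.

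For part~\eqref{thm:fiberwise-amenable-dichotomy:amenable}, given $K$ compact and $\varepsilon>0$, set $r=\overline{\ell}(K)$, so that $KF\subseteq \bar B(\GU,r)F=\bar B(F,r)$ for every $F$. Proposition~\ref{3.5} applied with $r$ and $\varepsilon$ yields $S$ such that every finite $M$ admits a finite $F$ with $M\subseteq F\subseteq \bar B(M,S)$ and $|\bar B(F,r)|\le(1+\varepsilon)|F|$. Take $L=\ell^{-1}([0,S])$, which is compact by the remark above and contains $\GU$. Then $\bar B(M,S)=LM$, and $|KF|\le |\bar B(F,r)|\le (1+\varepsilon)|F|$, as required.

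For part~\eqref{thm:fiberwise-amenable-dichotomy:non-amenable}, since $\CG$ is not fiberwise amenable, $(\CG,\rho)$ is not amenable. Given $K$ and $n$, set $r=\overline{\ell}(K^{-1}K)$, which is finite because $K^{-1}K$ is compact. Proposition~\ref{3.7} provides $S$ such that for every finite $M$ there are points $x_1,\dots,x_{n|M|}$ whose balls $\bar B(x_i,r)$ are pairwise disjoint and contained in $\bar B(M,S)$; in fact its proof gives the stronger separation $\rho(x_i,x_j)>r$ for $i\ne j$. There are two technical points in converting balls into translates $Kx$.

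The first point is that $Kx_i$ need not lie inside $\bar B(x_i,r)$: for $y\in K$ we only know $\rho(yx_i,x_i)=\ell(y)\le\overline{\ell}(K)$. I would therefore apply Proposition~\ref{3.7} with radius $r'=\max\{\overline{\ell}(K),\overline{\ell}(K^{-1}K)\}$. Then $Kx_i\subseteq \bar B(x_i,r')\subseteq \bar B(M,S)=LM$ with $L=\ell^{-1}([0,S])$.

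The second point is disjointness of the sets $Kx_i$. If $yx_i=y'x_j$ with $y,y'\in K$, then $x_j x_i^{-1}=y'^{-1}y\in K^{-1}K$, so $\rho(x_j,x_i)\le r'$, contradicting the separation $\rho(x_i,x_j)>r'$. Hence for $i\ne j$ the sets $Kx_i$ and $Kx_j$ are disjoint.

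One degenerate case remains: some $Kx_i$ may be empty when $r(x_i)\notin s(K)$. Empty sets are trivially disjoint, so the statement holds as written. If nonempty translates were wanted, one could replace $K$ by $K\cup\GU$, which is still compact since $\GU$ is compact, and run the same argument. I expect the main care to lie in these two conversions rather than in any new idea.
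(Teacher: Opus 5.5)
Your proposal is correct and takes essentially the paper's route: pass to the canonical metric $\rho$ induced by a coarse continuous length function $\ell$, apply Propositions~\ref{3.5} and~\ref{3.7}, and take $L=\ell^{-1}([0,S])=\bar B(\GU,S)$ via Lemma~\ref{lem:invariant-fiberwise-extended-metric}. The paper enlarges $K$ to $\bar B(\GU,r)$, which is the same as your bound $K\subseteq\bar B(\GU,\overline{\ell}(K))$; your $K^{-1}K$ separation argument is correct but unnecessary, because $Kx_i\subseteq\bar B(x_i,\overline{\ell}(K))$ and the disjointness of these balls already forces the $Kx_i$ to be disjoint.
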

\begin{proof}
Let $\ell$ be a coarse continuous length function and $\rho$ the induced canonical metric. 
Observe that by enlarging $K$ if necessary, we may assume without loss of generality that $K = \ell^{-1} ( r )$ for some $r \geq 0$ in both statements above. 
Then \eqref{thm:fiberwise-amenable-dichotomy:amenable} and~\eqref{thm:fiberwise-amenable-dichotomy:non-amenable}, respectively, follow from converting Propositions~\ref{3.5} and~\ref{3.7}, respectively, into the groupoid setting using Lemma~\ref{lem:invariant-fiberwise-extended-metric} and then taking $L := \ell^{-1} ( S )$. 
\end{proof}

To end this section, we record an example due to G\'{a}bor Elek. This
example indicates that our ubiquitous fiberwise amenability in general
not necessarily implies (topologically) amenability of groupoids.
However, for transformation groupoids, it is a well-known
fact that any action of an amenable group is (topologically) amenable.

\begin{example}
	\label{exa: Elek} In \cite[Theorem 6]{Elek-qualitative}, Elek constructed
	a class of groupoids, called \textit{geometric groupoid} by using
	so-called stable actions. They are (second
	countable) minimal principal almost finite ample \'{e}tale groupoids but
	not (topologically) amenable. However, it follows from \Cref{prop:af-ufa} that Elek's geometric groupoids are ubiquitously fiberwise amenable. 
\end{example}

\section*{Acknowledgements}
This research is supported by National Key R\&D Program of China 2022YFA100700. 
The first author is supported by NSFC No.~12571133. 
The second author is partially supported by NSFC Key Program No.~12231005.

\bibliographystyle{alpha}
\bibliography{Almost_elementary}

\end{document}